\tikzstyle{vertex} = [fill,shape=circle,node distance=80pt]
\tikzstyle{edge} = [opacity=0.4,fill opacity=0.0,line cap=round, line join=round, line width=40pt]
\tikzstyle{elabel} =  [fill,shape=circle,node distance=30pt]
\theoremstyle{plain}
\newtheorem{theorem}{Theorem}[section]		
\newtheorem{lemma}[theorem]{Lemma}
\newtheorem{claim}[theorem]{Claim}
\newtheorem{proposition}[theorem]{Proposition}
\newtheorem{corollary}[theorem]{Corollary}
\newtheorem{conjecture}[theorem]{Conjecture}
\newtheorem{problem}[theorem]{Problem}
\newtheorem{definition}[theorem]{Definition}
\theoremstyle{remark}
\newtheorem*{remark}{Remark}
\def\TTT{\mathcal{T}}
\def\RR{\mathcal{R}}
\def\HH{\mathcal{H}}
\def\GG{\mathcal{G}}
\def\E{\mathbb{E}}
\def\bJ{\mathbf{J}}
\def\bB{\mathbf{B}}
\def\bD{\mathbf{D}}
\newcommand{\eps}{\ensuremath{\varepsilon}}
\newcommand{\Prob}{\mathbb{P}}
\newcommand{\dist}{\text{dist}}
\newcommand{\degi}{\deg^{\text{in}}}
\newcommand{\dego}{\deg^{\text{out}}}
\newcommand{\No}{N^{\text{out}}}
\newcommand{\Ni}{N^{\text{in}}}
\newcommand{\Vin}{V_{\text{in}}}
\newcommand{\Vout}{V_{\text{out}}}
\newcommand{\Nm}{N^{-}}
\newcommand{\rind}{r_{\text{ind}}}
\newcommand{\rsi}{\hat{r}}
\newcommand{\rsind}{\hat{r}_{\text{ind}}}
\let\originalleft\left
\let\originalright\right
\renewcommand{\left}{\mathopen{}\mathclose\bgroup\originalleft}
\renewcommand{\right}{\aftergroup\egroup\originalright}
\def\imod#1{\allowbreak\mkern10mu({\operator@font mod}\,\,#1)}
\title{Embedding induced trees in sparse expanding graphs}
\author{Ant\'onio Gir\~ao}
\author{Eoin Hurley}
\thanks{
AG: Mathematical Institute, University of Oxford, Oxford OX2 6GG, UK. E-mail: {\tt girao@maths.ox.ac.uk}. {Research supported by EPSRC grant EP/V007327/1} and ERC Advanced Grant no. 883810}
\thanks{
EH: Korteweg de Vries Instituut, Universiteit van Amsterdam,  Science Park 904, 1098 XH Amsterdam, The Netherlands. E-mail: \textsf{eoin.hurley@umail.ucc.ie}} 
\begin{document}
\maketitle
\begin{abstract}
Inspired by the network routing literature \cite{aggarwal1996efficient}, we develop what we call a ``Pre-Emptive Greedy Algorithm" to embed bounded degree induced trees in sparse expanders. 
 This generalises a powerful and central result of Friedman and Pippenger to the induced setting.
 As corollaries we obtain that a sparse random graph contains all bounded  degree trees of linear order (whp) and that the induced and size induced Ramsey numbers of bounded degree trees are linear. 
No such linear bounds were previously known. 
We also prove a nearly-tight result on induced forests in bounded degree countable expanders.
We expect that our new result will find many more applications. 
\end{abstract}



\section{Background}
Over the past century, a central theme in Combinatorics has been to find the right conditions that guarantee the existence of specific subgraphs.
Examples include Dirac's Theorem \cite{dirac1952some} from the $40$'s, Ramsey theory \cite{ramsey1928problem}, universal graphs \cite{moon1965minimal} and multiple questions in random graph theory going back as far as the birth of the subject itself \cite{erdHos1960evolution}.
On the applications side, the travelling salesman problem or network routing problems can both be phrased in terms of finding or containing appropriate subgraphs.

Dirac's Theorem is perhaps \textit{the} classical result in this area, it states that \emph{every} $n$-vertex graph with minimum degree at least $n/2$ contains a Hamilton cycle. Thirty five years later P\'osa \cite{posa1976hamiltonian} gave the first proof of Hamiltonicity (with high probability) in sparse random graphs of average degree $\Omega(\log n)$. 
Implicit in his work was the following fundamental  deterministic result ($N(X)$ is the set of vertices with a neighbour in $X$).
\begin{theorem}[P\'osa]
    Let $G$ be a graph such that every subset $X\subset V(G)$ with at most $n$ vertices satisfies $|N(X)| > 3|X| - 1$, then $G$ contains a path/cycle on at least $3n-2$ vertices.
\end{theorem}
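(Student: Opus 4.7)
\medskip

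\noindent\textbf{Proof plan.}
The approach is P\'osa's rotation-extension technique. I would argue by contradiction: suppose $G$ contains neither a path nor a cycle on at least $3n-2$ vertices, and let $P = v_0 v_1 \ldots v_k$ be a longest path in $G$, so that $k+1 \leq 3n-3$.

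I would fix the endpoint $v_k$ and introduce the \emph{rotation} operation: given any path $P' = u_0 u_1 \ldots u_k$ on $V(P)$ ending at $u_k = v_k$, and any edge $u_0 u_j \in E(G)$ with $j \geq 2$, we replace $P'$ by the path $u_{j-1} u_{j-2} \ldots u_0 u_j u_{j+1} \ldots u_k$, whose new free endpoint is $u_{j-1}$. Let $S \subseteq V(P)$ consist of all vertices that arise as the free endpoint of some path produced from $P$ by a sequence of rotations (so $v_0 \in S$). Because $P$ was chosen longest, no vertex of $S$ has a neighbour outside $V(P)$ (else one could prepend that neighbour to the corresponding rotated path, producing a longer path), and therefore $N(S) \subseteq V(P)$.

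The main combinatorial step is the classical \emph{rotation lemma}: $|N(S)| \leq 3|S| - 1$. For each $v \in S$ one fixes a witnessing path $P_v$ from $v$ to $v_k$, and observes that for every neighbour $w$ of $v$ in $V(P) \setminus \{v_k\}$, rotating along the edge $vw$ exhibits the predecessor of $w$ on $P_v$ as a free endpoint of a rotated path, hence an element of $S$. A careful combinatorial charging then accounts for every element of $N(S) \setminus S$ by at most two flanking path-positions, yielding $|N(S) \setminus S| \leq 2|S| - 1$. This is the main technical obstacle: one must handle the fact that distinct $v \in S$ may use distinct witnessing paths $P_v$ without double-counting, in order to extract the tight constant.

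Finally, I would combine the two ingredients. If $|S| \leq n$, then the hypothesis gives $|N(S)| > 3|S|-1$, directly contradicting the rotation lemma. Otherwise $|S| > n$, and any subset $S' \subseteq S$ with $|S'| = n$ satisfies $|V(P)| \geq |N(S')| \geq 3n$, by the hypothesis together with $N(S') \subseteq N(S) \subseteq V(P)$, contradicting $|V(P)| \leq 3n-3$. To cover the cycle alternative, note that should some rotation close $V(P)$ into a Hamilton cycle of $V(P)$, the expansion hypothesis applied to (a subset of) its vertex set either forces an external neighbour, producing a longer path and contradicting maximality of $P$, or directly exhibits a cycle of the required length.
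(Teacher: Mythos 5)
The paper states P\'osa's theorem purely as background and gives no proof, so there is no authors' argument to compare against; I will assess the proposal on its own terms. The high-level architecture is the correct rotation--extension one: $N(S)\subseteq V(P)$ because every vertex of $S$ is a free endpoint of a longest path, the split on whether $|S|\leq n$, and the conclusion that the longest path already has at least $3n-2$ vertices. For this reason the closing paragraph about a ``cycle alternative'' is unnecessary: the stated conclusion is a disjunction and your argument already produces the path, and in any case the rotation operation as you define it always returns a path, so nothing ``closes into a cycle.''

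The genuine gap is in the rotation lemma $|N(S)\setminus S|\leq 2|S|-1$, which you rightly flag as the main obstacle but do not resolve. The charging you sketch does not close on its own: rotating $P_v$ along $vw$ puts the \emph{$P_v$-predecessor} of $w$ into $S$, but that vertex need not be a $P$-neighbour of $w$, and the same vertex of $S$ can be the $P_v$-predecessor of several different $w$'s along several different witness paths, so the naive count can exceed $2|S|$. The missing ingredient is P\'osa's invariant: if $w\notin S$ and neither $P$-neighbour of $w$ lies in $S$, then in \emph{every} rotation-path $P'$ the path-neighbours of $w$ coincide with its $P$-neighbours. One proves this by induction on the number of rotations, noting that a single rotation along $u_0u_j$ changes the path-neighbourhoods only of $u_0$, $u_{j-1}$ and $u_j$, and each of these is forced to lie in $S$ or to have a $P$-neighbour in $S$ (the case of $u_j$ uses the inductive hypothesis, since $u_{j-1}$ is both its $P'$-neighbour and an element of $S$). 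With this invariant in hand, every $w\in N(S)\setminus S$ must have a $P$-neighbour in $S$, and counting $P$-neighbours of $S$ on the single fixed path $P$ (recalling that $v_0\in S$ has only one) gives $|N(S)\setminus S|\leq 2|S|-1$. Until this invariant is supplied and used to transport the charging back to the fixed path $P$, the proof is incomplete.
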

This result (and the method of P\'osa rotations) quickly found other applications, most notably in Beck's proof \cite{beck1983size}  that the size ramsey number of paths is linear in its order. 
This answered a question from the seminal paper of Erd\H{o}s, Faudree, Schelp and Rousseau \cite{erdHos1978size} for which Erd\H{o}s had later offered $\$100$ \cite{erdHos1981combinatorial}. 
Beck conjectured that such a linear upper bound should hold not just for paths but for any bounded degree tree (see Section \ref{sec:induced_ramsey}), but could not prove it. 
Finally, Friedman and Pippenger \cite{friedman1987expanding} confirmed this by showing the following beautiful generalisation of P\'osa's Theorem.

\begin{theorem}
    Let $G$ be a graph such that  every set $X$ with at most $2n-2$ vertices satisfies $|N(X)| \geq (d+2)|X|$, then $G$ contains every tree with maximum degree at most $d$ on $n$ vertices.  
\end{theorem}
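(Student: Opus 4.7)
The plan is to construct the embedding vertex by vertex. Root $T$ at an arbitrary vertex $v_1$ and order the vertices $v_1, v_2, \ldots, v_n$ in BFS order, so each $v_i$ ($i \geq 2$) has a unique parent $v_{\pi(i)}$ with $\pi(i) < i$ and $\{v_1, \ldots, v_k\}$ always spans a subtree $T_k$ of $T$. I maintain an injective partial embedding $\phi_k \colon V(T_k) \to V(G)$ that is a graph isomorphism onto its image; let $U_k := \phi_k(V(T_k))$ and $W_k := V(G) \setminus U_k$, and for each $u \in U_k$ set
\[
 f_k(u) := \deg_T(\phi_k^{-1}(u)) - \deg_{T_k}(\phi_k^{-1}(u)) \leq d,
\]
the number of neighbors of $u$ in $T$ still to be embedded.

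The crux is to identify an expansion invariant that is implied by the hypothesis at $k=0$ and preserved under each greedy step. I propose maintaining
\[
 |N_G(S) \setminus U_k| \;\geq\; \sum_{v \in S \cap U_k} f_k(v) \;+\; 2\,|S \setminus U_k|
\]
for every $S \subseteq V(G)$ with $|S| + \sum_{v \in S \cap U_k} f_k(v) \leq 2n - 2$. At $k = 0$ this reads $|N_G(S)| \geq 2|S|$, which is an immediate consequence of the hypothesis. Applied with $S = \{u\}$ for $u = \phi_k(v_{\pi(k+1)})$, it shows $|N_G(u) \setminus U_k| \geq f_k(u) \geq 1$, so there is always at least one candidate $w$ eligible to play the role of $\phi_{k+1}(v_{k+1})$.

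For the inductive step I would call $w \in N_G(u) \cap W_k$ \emph{bad} if assigning $\phi_{k+1}(v_{k+1}) = w$ breaks the invariant for some tight set $S$, and bound the total number of bad candidates by a union bound over such $S$. This is where the full strength of the $(d+2)$-expansion is consumed: a factor of $d$ is spent supporting the worst-case deficit $f_k(v) \leq d$ at each embedded vertex, and the additional buffer of $2$ is precisely what is needed to absorb both the disappearance of $w$ from $W_k$ (losing it from the right-hand side) and $w$'s possible new contribution to $N_G(S)$ on the left-hand side. With careful bookkeeping, the count of bad candidates is strictly less than $|N_G(u) \cap W_k|$, so a good $w$ exists.

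The main obstacle is pinning down this invariant so that it simultaneously (i) follows from the hypothesis when $k = 0$, (ii) forces the existence of a candidate neighbor at every step, and (iii) is preserved by the greedy extension, all while respecting the size bound $2n - 2$ for which expansion is available. The interplay of the constants $(d+2)$ and $2n-2$ is what makes everything balance: the deficit can contribute up to $d$ per embedded vertex and the total ``weighted size'' of sets considered never exceeds $2n - 2$, since $|U_k| \leq n - 1$ and deficits sum to at most $n-1$ across $U_k$. Once the invariant is chosen correctly, the remaining argument is a routine but delicate counting step, iterated $n$ times until $T$ has been fully embedded.
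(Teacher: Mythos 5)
You are attempting to reconstruct Friedman and Pippenger's theorem, which the paper states as background and cites but does not prove --- the paper instead proves an \emph{induced} analogue (Theorem~\ref{thm:main_simp}) via a structurally different ``Pre-Emptive Greedy Algorithm.'' So I will assess your proposal against the actual Friedman--Pippenger argument rather than against anything in this paper.

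The greedy-plus-invariant framework is the right genre, but the invariant you write down is not preserved by a single extension step, because the weight $2$ you assign to vertices of $S \setminus U_k$ is too small. In Friedman and Pippenger's argument a vertex outside the current image carries weight roughly $d+1$: one maintains $|N_G(X) \setminus U_k| \geq \sum_{v \in X} a_k(v)$ with $a_k(v) = d + 1 - \deg_{T_k}(\phi_k^{-1}(v))$ for $v \in U_k$ and $a_k(v) = d+1$ otherwise, so that at $k = 0$ the invariant demands $(d+1)|X|$, leaving slack $|X|$ against the $(d+2)|X|$ hypothesis. With that calibration, when a free vertex $w$ is embedded its weight drops from $d+1$ to $d$, and one checks case by case that the deficit can only increase when the new image $w$ is a neighbour of $X$ but neither $w$ nor its parent-image lies in $X$; this is the single dangerous case the choice of $w$ must steer around. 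With your weight of $2$, instead take a set $S$ containing $w$ with $\phi_k(v_{\pi(k+1)}) \notin S$, $w \notin N_G(S)$, and $\deg_T(v_{k+1}) = d$: the right side of your invariant jumps from $2$ to $f_{k+1}(w) = d - 1$, an increase of $d-3$, while the left side does not move. The invariant fails outright for $d \geq 4$ once the accumulated slack has been spent, and the weighted size cap $|S| + \sum f_k \leq 2n-2$ does not rescue this, since it merely shrinks the family of sets to check rather than replenishing slack.

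The second gap is in the sentence ``bound the total number of bad candidates by a union bound over such $S$.'' There are exponentially many candidate sets $S$, so a union bound cannot be made to close. Friedman and Pippenger instead exploit submodularity of the deficit $\sum a_k(v) - |N_G(X)\setminus U_k|$: the union and intersection of two tight (``critical'') sets are again tight, so there is a unique \emph{maximal} critical set $X^*$ with $|X^*| \leq 2n-2$, and the extending vertex is chosen in $N_G(\phi_k(v_{\pi(k+1)})) \setminus (U_k \cup N_G(X^*))$. The entire content of the proof is showing that this difference is nonempty, which is exactly where the precise weights and the size cap $2n-2$ are consumed. Your sketch identifies that some invariant must be tracked and that the $(d+2)$-expansion leaves a buffer, but both the calibration of the weights and the mechanism for selecting a safe neighbour are missing, and each is essential.
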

This proved to be a fundamental result with many applications in graph theory and in the network routing theory \cite{feldman1988wide}. 
It was used by Alon, Krivelevich and Sudakov \cite{alon2007embedding} to embed almost spanning trees in sparse random graphs. 
The method was later refined by Haxell \cite{haxell2001tree} and modified by Glebov, Johannsen and Krivelevich \cite{glebov8hitting} before it was used by Montgomery \cite{montgomery2019spanning} to show that $G(n,C_\Delta\log n/n)$ 
 contains every spanning tree of maximum degree $\Delta$ with high probability, thus resolving a conjecture of Kahn. 
Friedman and Pippinger's method has also been recently used by Dragani\'c, Krivelevich and Nenadov~\cite{draganic2022rolling} in conjunction with a rolling back technique to embed a variety of very sparse graphs in expanders.
 It was used even more recently, (again with rolling back) by Dragani\'c, Montgomery,  Munh\'a Correia, Pokrovskiy and Sudakov \cite{DMCPS2024} to prove a long standing conjecture of Krivelevich and Sudakov stating that pseudorandom graphs are Hamiltonian.
Finally,  it is a key ingredient in many bounds in size ramsey theory for bounded degree (hyper)trees \cite{haxell1995size}, (hyper)graphs of bounded treewidth \cite{berger2021size,kamcev2021size,HunterSudakov}, and other related families of sparse (hyper)graphs \cite{letzter2021size}.

Finding \textit{induced} subgraphs, while equally natural, is more challenging than the non-induced case and our understanding typically lags behind. 
In the case of bounded degree trees and related sparse graphs, a technical but significant reason for this gap in our understanding is the lack of any induced analogue of Friedman and Pippenger's powerful result. This is our main contribution.

Note that both in P\'osa's and Friedman and Pippenger's results there is only a lower bound on order of the vertex boundary of subsets. Thus large cliques satisfy the conditions of both theorems although they do not contain any non-trivial induced tree. 
We must therefore add an upper bound on average degree of small subgraphs, that is, we must forbid small dense spots.
We also need a maximum degree condition that is trivial to satisfy in all of our applications but we believe it to be a mere artifact of our proof (since we use the Lov\'asz Local Lemma).
One final difference with P\'osa's and Friedman and Pippenger's results is an extra factor of $\Delta$ which we conjecture could be removed completely, but removing it will likely require some new ideas.
\begin{theorem}\label{thm:main_simp}
    Let $G$ be a graph with minimum degree $10^7\Delta$ and maximum degree at most $\exp(\Delta/10^9)$ such that every subgraph on at most $(10^7\Delta+1) n$ vertices has average degree at most $12/5$.
    Then $G$ contains every tree with maximum degree at most $\Delta$ on $n$ vertices as an induced subgraph. 
\end{theorem}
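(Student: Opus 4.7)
The strategy is to adapt the Friedman--Pippenger (FP) extension method to the induced setting by interleaving it with a \emph{pre-emptive} look-ahead greedy choice, in the spirit of the routing algorithm of Aggarwal et al. Fix a breadth-first ordering of the vertices of $T$. We maintain a partial induced embedding $\phi\colon T' \hookrightarrow G$ together with, for every leaf $t$ of $T'$ with unembedded children, a \emph{reservoir} $R_t \subseteq N_G(\phi(t))$ of candidate images, disjoint from $\phi(T')$ and from every other reservoir. The reservoirs obey an FP-style expansion invariant $|N(X)\setminus B| \geq (\Delta+2)|X|$ for small sets $X$ of leaves of $T'$ and an appropriately defined banned set $B$; this is what will allow us to keep extending.

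The main novelty is in the extension step. To embed a new child of $t$, FP tells us we can find an extension vertex $v \in R_t$ that preserves the invariant, but in the induced setting $v$ must further avoid every edge back into $\phi(T') \setminus \{\phi(t)\}$ (no chords) and should have few edges into $\bigcup_{s\neq t} R_s$ (so that other reservoirs remain usable). The average-degree hypothesis is calibrated precisely for this: the embedded image plus all reservoirs lives in a set of size at most $(10^7\Delta+1)n$, which by assumption spans a subgraph of average degree at most $12/5$. Summed over $v \in R_t$, the number of edges from $v$ into this sparse workspace is $O(|R_t|)$, so the greedy rule can always pick a $v$ that (i) creates no chord and (ii) kills only $O(1)$ candidates from each other reservoir. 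This is the pre-emptive aspect of the rule: we choose $v$ not only to serve the current leaf but to minimise damage to the reservoirs we will need later.

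The initialisation of the reservoirs is done by a random construction rooted at $\phi(\text{root})$, and the Lov\'asz Local Lemma is invoked to show that the required sparsity and expansion conditions all hold simultaneously. The dependency graph of the bad events has degree bounded in terms of $\Delta(G)$, which is why the hypothesis $\Delta(G) \leq \exp(\Delta/10^9)$ appears; as the authors remark, this is an artefact of using LLL rather than an essential feature of the embedding.

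The main obstacle is running an amortised accounting over all $n$ extension steps. Each step loses $O(1)$ candidates from each of the other reservoirs, and additionally consumes some of the FP expansion budget when $R_t$ is split into sub-reservoirs for the $\leq \Delta$ new leaves of $T'$. The minimum degree condition $\delta(G) \geq 10^7\Delta$ is chosen to provide exactly the slack needed: after splitting, each sub-reservoir still has size $\Omega(\Delta)$, which together with the sparsity guarantee suffices to iterate. Balancing expansion loss against the branching of $T$ and the total length of the process, while ensuring that the banned set $B$ never swells past the size where the average-degree hypothesis stops applying, is the technical heart of the Pre-Emptive Greedy Algorithm.
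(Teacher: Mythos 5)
Your proposal captures the high-level flavour of the paper's Pre-Emptive Greedy Algorithm (reservoirs, look-ahead greedy choices, the max-degree hypothesis as an LLL artefact, the average-degree condition as the sparsity hammer), but it omits the central technical machinery, and in at least one place the accounting does not close.

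First, the reservation rule. You reserve only for leaves of the current embedded tree $T'$ with unembedded children. The paper reserves for \emph{every vertex of $G$ that is at risk of becoming critical}, including vertices that are not in the tree and may never join it. This is not a cosmetic difference: the vertex $w$ in danger of losing its neighbours might be a vertex you will only try to map into many steps from now, or never, yet if it is near the frontier it will be damaged by the current step. Reserving only per-leaf cannot anticipate this. The paper formalises ``at risk'' via the $d$-critical set $C(X)$, a bootstrap-percolation closure of the embedded non-leaf vertices, and maintains a reserved out-neighbourhood for \emph{every} vertex in $C(X)$.

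Second, and more seriously, your amortised accounting does not obviously close. You claim each extension step kills only $O(1)$ candidates from each other reservoir, so reservoirs survive. But a reservoir $R_s$ has size $O(\Delta)$, while its lifetime in a BFS ordering can be $\Theta(n)$ steps (take a long path: each level is a single vertex, so the reservoir of a depth-$d$ vertex must survive until step $d+1$, and paths have depth $n$). Losing $O(1)$ per step over $\Theta(n)$ steps wipes out a reservoir of size $O(\Delta)$. What the paper actually proves is different and stronger: the \emph{cascade} of at-risk vertices stemming from the current tree cannot exceed roughly twice the size of the tree (Lemma~\ref{prop:density_of_critical}), because a large cascade would force a small dense subgraph contradicting the average-degree hypothesis. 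This bound on the cascade is exactly what replaces the per-step amortisation you gesture at, and it requires the bootstrap-percolation structure you do not have.

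Third, the paper's key object is the \emph{escape-way}: an orientation with in-degree at most one everywhere, in which the set of vertices with positive in-degree induces a subgraph of the orientation. This delicately calibrated condition (Propositions~\ref{prop:} and~\ref{pseudoforest} show it exactly certifies induced trees/pseudoforests) is what allows new reservations to be glued onto old ones via Proposition~\ref{prop:ISM_equiv}, and what makes ``reserve simultaneously for a whole batch of newly critical vertices'' (Lemma~\ref{lem: ISM}) well-posed. Your per-leaf disjoint reservoirs are a strictly weaker structure. Relatedly, you invoke LLL once, to ``initialise'' reservoirs; the paper invokes it repeatedly, once per extension step, inside Lemma~\ref{lem: ISM}, to simultaneously assign $\Omega(\Delta)$ out-neighbours to every newly critical vertex while agreeing with everything already fixed. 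The concentration step needs a degeneracy ordering (from the average-degree condition) plus a Lipschitz bound (also from the average-degree condition), neither of which appears in your sketch.

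Finally, you write an FP-style expansion invariant $|N(X)\setminus B|\geq(\Delta+2)|X|$. The paper explicitly rejects vertex-expansion as the right hypothesis for the induced setting (cliques expand but contain no induced tree) and works only with the average-degree bound; vertex expansion is never stated as an invariant. In summary: the idea is recognisable, but the proposal is missing the critical-set percolation, the bound on cascade size via average degree, the escape-way formalism that makes reservations composable, and the repeated (not one-shot) LLL application — and the amortisation as stated does not survive a path.
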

 
One drawback of Friedman and Pippenger's theorem is  that it is non-algorithmic.
This was rectified by Dellamonica and Kohayakawa \cite{dellamonica2008algorithmic} who (under a slightly stronger and more robust expansion condition) reduced the problem to that of finding a matching in a robustly expanding bipartite graph. 
They then applied an algorithmic (also online) result of
Aggarwal, Bar-Noy, Coppersmith, Ramaswami, Schieber and Sudan \cite{aggarwal1996efficient} on such matchings (the motivation of \cite{aggarwal1996efficient} was to obtain algorithmic versions of  Friedman and Pippenger's applications in the network routing literature).
It is from this latter proof \cite{aggarwal1996efficient} that we take our inspiration and we call the overall strategy the ``Pre-Emptive Greedy Algorithm". 
Our result yields an efficient online algorithm for finding such trees. 

We anticipate that Theorem \ref{thm:main_simp} will find many applications.
We present three examples, the first two make significant progress on two long-standing and central problems in induced Ramsey theory and in the theory of random graphs, while the third is a near tight result on induced forests in countable (very strong) expanders. 


\subsection{Induced Subgraphs of Random Graphs}
\label{sec:induced_random}
One of the oldest problems in the theory of random graphs is estimating the size of the largest independent set in $G(n,p)$.
This value was asymptotically determined for $0<p<1$ constant and $n\rightarrow \infty$ by Grimmett and Mcdiarmid in the 70's \cite{grimmett1975colouring}.
They showed that with high probability $G(n,p)$ contains an independent set of order 
\begin{equation}\label{eq:indepednent_grimmett}
    \frac{2+o(1)}{\log (1/(1-p))}\log n.
\end{equation}
The matching upper bound follows from the first moment method.
It is natural to wonder whether there is anything special about independent sets in this regard or whether any sufficiently sparse induced graph of the same order should also appear. 
Indeed, Erd\H{o}s and Palka \cite{erdos1983trees} showed in the early 80s that the order of the largest induced tree in $G(n,p)$ was also given by \eqref{eq:indepednent_grimmett}.
If one wants find a specific tree on that many vertices, such as a path or regular tree,  then one can turn to Ruci\'nski \cite{rucinski1987induced} who proved the same bound \eqref{eq:indepednent_grimmett}, provided the maximum degree of the tree is sub-polynomial in its order.

Another regime of $G(n,p)$, the so called \emph{sparse} regime, where $pn = d$ is constant, has received a lot of attention in recent decades. 
This regime presents mathematical challenges not faced in the dense case (the classical second moment approach breaks down), and further, most applications coming from computer science, statistical physics and mathematical modelling use graphs of constant average degree.
Erd\H{o}s and Palka suggested the problem of extending their result to the sparse regime and conjectured that for all $d>1$ there exists $C_d$ such that $G(n,d/n)$ contains an induced tree of order $C_d\cdot n$ with high probability.
In other words they conjectured that there is an induced tree of linear order in $G(n,d/n)$. 
In a flurry of activity this was simultaneously proven by De la Vega \cite{de1986induced}, Frieze and Jackson \cite{frieze1987large}, Ku{\v{c}}era and R{\"o}dl \cite{kuvcera1987large}, 
and {\L}uczak and Palka \cite{luczak1988maximal}.
The best constant, $C_d = (1+o(1))\log d/d$, was due to De La Vega who proved that the greedy algorithm succeeds (with high probability) 
 by tracking stochastic differential equations.
Frieze and Jackson \cite{frieze1987largehole} soon showed that one can actually find an induced path (even a cycle) of linear order, albeit
 for a smaller constant $C_d$ and only if $d$ is sufficiently large.
 Suen \cite{suen1992large} and {\L}uczak \cite{luczak1991cycles}
 improved the result on induced paths to show that as long as $d>1$, $G(n,d/n)$ contains an induced path of linear order and further that for large $d$ one could take $C_d = (1+o(1)) \log d/d$, matching De La Vega's bound for induced trees.
 Suen's result was a particularly elegant use of the depth first search tree that avoided a lot of the technicalities of the stochastic differential equation method.

Of course, the fact that $G(n,d/n)$ contains an independent set of linear order follows from the average degree of the graph and a random greedy algorithm finds an independent set of order $(1+o(1))(\log d /d)n$ (matching the order of De le Vega's induced tree). 
To this day this is the largest independent set or induced tree one can find in $G(n,d/n)$ \emph{efficiently}. 
This value appears to be an algorithmic barrier related to the so-called shattering and freezing thresholds \cite{coja2015independent} and different tools are needed to go beyond it. 
Indeed, the asymptotic order of the largest independent set in $G(n,d/n)$ was not determined until the brilliant insight of Frieze \cite{frieze1990independence} who proved that if $d$ is sufficiently large then it is
\begin{equation}\label{eq:sparse_independence}
(2+o(1))\frac{\log d}{d}n,
\end{equation}
extending \eqref{eq:indepednent_grimmett}.
De La Vega, using Frieze's result as a black box, showed that the order of the largest induced tree is also \eqref{eq:sparse_independence}.
For all these exact asymptotic results, the upper bounds follow from the first moment method and 
the first moment gives the same bound for any fixed graph of degeneracy at most $2$, such as a matching, a tree or a cycle.
This suggests this bound may be tight for a broader class, provided, say $d/n < p < .99$ for some appropriate constant $d$.
However, following the rapid progression of the seventies, eighties and early nineties, the past three decades have been comparatively slow.
Dragani\'c \cite{Nemanja2020trees} extended Ruci\'nski's results beyond the constant $p$ regime by showing that $G(n,p)$ contains any bounded degree tree of order \eqref{eq:indepednent_grimmett}, provided $n^{-1/2}\log^{10/9}n<p<.99$.
He also conjectured that the result should apply to all trees of maximum degree $\Delta$ provided $d/n < p <.99$ for some $d(\Delta)$ (recall that \eqref{eq:indepednent_grimmett} and \eqref{eq:sparse_independence} coincide).
Cooley, Dragani\'c, Kang and Sudakov \cite{cooley2021large} showed that the order of the largest induced matchings is indeed given by \eqref{eq:indepednent_grimmett} and \eqref{eq:sparse_independence}, and they made the same conjecure.
Paths are perhaps the simplest case of the conjecture and it was only recently that Dragani\'c, Glock and Krivelevich \cite{draganic2022short} showed that, so long as $d$ is sufficiently large, $G(n,d/n)$ contains an induced path of order \eqref{eq:sparse_independence} with high probability.
They further re-iterated the conjecture of Dragani\'c.
But in spite of this precise conjecture, and the fact that we know  asymptotically the order of the longest induced cycle and the order of the largest induced tree, \emph{no linear bound} for a general bounded degree induced tree in $G(n,d/n)$ was known. 
We remedy this.
 
\begin{theorem}\label{thm: random_trees}
    There is $C>0$, such that for all $\Delta\in \mathbb{N}$ and $d> 2^{20\Delta}$, $G(n,d/n)$ contains all trees with maximum degree at most $\Delta$ and order at most $ \frac{Cn}{d\log^2(d)}$ as induced subgraphs with high probability. 
\end{theorem}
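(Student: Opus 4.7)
The natural strategy is to reduce to Theorem~\ref{thm:main_simp}. Since $G=G(n,d/n)$ itself is far too dense — its typical degree $d$ vastly exceeds the maximum degree bound allowed by Theorem~\ref{thm:main_simp} — one cannot apply that theorem to $G$ directly. Instead, I would first pass to a uniformly random induced subgraph: pick $V_0\subseteq V(G)$ of size $m := 2\cdot 10^7\Delta\, n/d$ uniformly at random, so that $G[V_0]$ is distributed as $G(m,d/n)$, an Erd\H{o}s--R\'enyi graph with expected degree $D := md/n = 2\cdot 10^7\Delta$. Writing $n' := Cn/(d\log^2 d)$ for the target tree size and $K := (10^7\Delta+1)n'$ for the scale that appears in Theorem~\ref{thm:main_simp}, one checks that $m \ge n'$ with lots of room, and that $K/m = O(1/\log^2 d)$ is very small.

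Next I would clean up $G[V_0]$ by deleting three sets of vertices: \textbf{(i)} vertices whose degree in $G[V_0]$ is less than $10^7\Delta$; \textbf{(ii)} vertices whose degree exceeds the upper bound $\exp(\Delta/10^9)$ in Theorem~\ref{thm:main_simp}; and \textbf{(iii)} vertices contained in some ``dense spot'', meaning some subset $T\subseteq V_0$ with $|T|\le K$ and $e(G[T])>6|T|/5$. Sets (i) and (ii) each contain only $o(m)$ vertices with high probability, by the standard Chernoff tail for binomial degrees. For (iii), one uses the Chernoff-type estimate
\[
\Prob\!\left[\mathrm{Bin}\!\left(\tbinom{k}{2},\,d/n\right)\ge 6k/5\right]\le \left(\tfrac{5ekd}{12n}\right)^{6k/5}
\]
together with a union bound over all $k$-subsets of $V_0$, and deduces that the total expected number of vertices lying in some dense spot is $o(m)$, provided $C$ is a sufficiently small universal constant. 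The hypothesis $d>2^{20\Delta}$ enters precisely through $\log d> 14\Delta$, which combined with the extra $\log^2 d$ in the denominator of $n'$ is exactly what is needed to tame the ratio $Kd/n$ in the Chernoff tail.

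After these three deletions, the surviving subgraph $H$ has $|V(H)|\ge m/2\ge n'$ and satisfies all three hypotheses of Theorem~\ref{thm:main_simp} with parameters $(n',\Delta)$. Applying Theorem~\ref{thm:main_simp} embeds every tree of maximum degree at most $\Delta$ on at most $n'$ vertices as an induced subgraph of $H$, and hence of $G$, since $H\subseteq G[V_0]$ is itself an induced subgraph of $G$.

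The main obstacle I anticipate is the dense-spot bound in step (iii). For very small $k$ the expected count $\binom{m}{k}(5ekd/(12n))^{6k/5}$ has an $m^{-k/5}$-type decay and is harmless; for $k$ close to $K$ the $\log^2 d$ gain in $n'$ exactly beats the $d^{6/5}$ from the Chernoff factor; but the intermediate range of $k$ requires a careful joint estimate balancing $\binom{m}{k}\sim (em/k)^k$ against the tail. Getting all three regimes to cohere with a single universal constant $C$ independent of $\Delta$ — and with $d$ only as large as $2^{20\Delta}$ rather than, say, $2^{\Delta^3}$ — is the technical heart of the argument.
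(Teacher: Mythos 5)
Your approach has a genuine gap, and it lies precisely at the step you flag as a technicality rather than as the heart of the argument: the maximum degree condition.

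Theorem~\ref{thm:main_simp} requires \emph{both} minimum degree at least $10^7\Delta$ \emph{and} maximum degree at most $\exp(\Delta/10^9)$. These two hypotheses are only simultaneously satisfiable when $\Delta$ is very large (of order $10^{9}\log\Delta$ or more). You apply the theorem with $\Delta$ equal to the actual maximum degree of the target tree, which is a fixed constant, after passing to a random vertex subset with expected degree $2\cdot 10^7\Delta$. For, say, $\Delta=10$, one has $\exp(\Delta/10^9)\approx 1+10^{-8}$, so your cleanup step (ii) — deleting vertices of degree exceeding $\exp(\Delta/10^9)$ — would delete every vertex of positive degree. The approach cannot reach a graph satisfying the hypotheses of Theorem~\ref{thm:main_simp} with the tree's own $\Delta$, regardless of how the dense-spot estimates are organised.

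The key idea you are missing, and what the hypothesis $d>2^{20\Delta}$ is really for, is to invoke Theorem~\ref{thm:main_simp} with a \emph{much larger} parameter $\Delta_{\mathrm{Thm}}$, of order $\log d$. Since $d>2^{20\Delta}$ gives $\Delta<\tfrac{1}{20}\log_2 d \ll \Delta_{\mathrm{Thm}}$, every tree of maximum degree $\Delta$ is \emph{a fortiori} a tree of maximum degree $\Delta_{\mathrm{Thm}}$, and the conclusion of the theorem still covers it. With $\Delta_{\mathrm{Thm}}=\Theta(\log d)$, the minimum degree requirement becomes $\Theta(\log d)$ and the permitted maximum degree becomes $\exp(\Theta(\log d))=d^{\Theta(1)}$; both are compatible with the degree profile of $G(n,d/n)$ itself after discarding the $O(n)$ vertices of degree $>20d$. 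So the paper does not subsample at all: it cleans the full random graph (removing high-degree vertices and a maximal small dense set, then passing to a subgraph of minimum degree $d/16$), and applies the theorem with the enlarged $\Delta_{\mathrm{Thm}}$. This also explains the $\log^2 d$ in the tree order $Cn/(d\log^2 d)$: one $\log d$ is the factor $\Delta_{\mathrm{Thm}}$ appearing in the density-condition scale $(10^7\Delta_{\mathrm{Thm}}+1)n'$ of the theorem, and the other comes from the range of set sizes over which $G(n,d/n)$ provably has no dense spot. Your cleanup steps and dense-spot union bound are in the same spirit as the paper's conditions (i)--(iii), but without changing the choice of $\Delta$ in the theorem, the plan cannot succeed.
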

We observe our result is essentially tight as a function of $d$, up to a $C\log^3(d)$ factor. We also note that we could drop the lower bound on $d$ (to a linear function of $\Delta$) provided the order of the tree is at most $Cn/(d\log(d))^2$. 

\subsection{Ramsey Theory of Sparse Graphs}\label{sec:induced_ramsey}
One of the most famous recent results in Ramsey theory is the Burr-Erd\H{o}s Conjecture, proved by Lee \cite{lee2017ramsey}. 
It states that for all $d$ there exists $C_d$ such that any $n$-vertex graph with degeneracy at most $d$ has ramsey number  at most $C_d \cdot n$, in other words \emph{graphs of bounded degeneracy have linear ramsey numbers}. 
This extended the central result of Chvat\'al, R{\"o}dl, Szemer\'edi and Trotter \cite{chvatal1983ramsey}, who proved that bounded degree graphs have linear ramsey numbers. 
The original bound from \cite{chvatal1983ramsey} on $C_\Delta$ came from the regularity lemma and was thus huge. 
This was greatly improved by Graham, R{\"o}dl and Ruci\'nski \cite{graham2000graphs} and further by Conlon, Fox and Sudakov \cite{conlon2012two} who showed one can take $C_\Delta = 2^{c\Delta\log \Delta} $.
The best lower bound, coming from bipartite graphs is $2^{c\Delta}$ (also due to \cite{graham2000graphs}) and this is conjectured to be tight (up to the constant in the exponent) by Conlon, Fox and Sudakov \cite{conlon2015recent}.
If tight for all $\Delta(n)$ then this would given a nice generalisation of the upper bound of Erd\H{o}s and Szekeres (up to the constant in the exponent).

{Two natural generalisations of Ramsey theory are size ramsey theory $\rsi$ and induced Ramsey theory $\rind$, and we further have their common generalisation, size induced ramsey theory $\rsind$.
\begin{align*}
    \rind(H) & = \min\{v(G) :  \text{in any $2$-colouring of $E(G)$ there is a monochromatic copy of $H$ that is induced in $G$}\}, 
    \\
    \rsi(H) & = \min\{e(G) : \text{in any $2$-colouring of $E(G)$ there is a monochromatic copy of $H$}\},
    \\
    \rsind(H) & = \min\{e(G) :  \text{in any $2$-colouring of $E(G)$ there is a monochromatic copy of $H$ that is induced in $G$}\}.
\end{align*}}
Note that trivially we have  $r(H) \leq \rind(H),\rsi(H) \leq \rsind(H)$, and further $\rsi(H) \leq {r(H)\choose 2}$ and $\rsind(H) \leq {\rind(H) \choose 2}$.
In all three of the above cases there are trees (which have degeneracy one) that have ramsey number superlinear in their order, thus the Burr Erd\H{o}s conjecture does not generalise. 
A fundamental question then asks:
\begin{quote}
    \textit{For which families of graphs are the ramsey numbers linear in the number of vertices?}
\end{quote}
Let $\TTT$ be the set of all trees and $\TTT_\Delta$ those of maximum degree at most $\Delta$, and let $\GG_\Delta$ and $\HH_d$ be the families of graphs of degree and degeneracy at most $\Delta$ and $d$ respectively. 
In Table \ref{ramsey-table} we collect estimates for the maximum ramsey number of an $n$-vertex graph from each family.

\begin{table}
\begin{tabular}{  >{\centering\arraybackslash}m{2.2em} | >{\centering\arraybackslash}m{1.4cm}|>{\centering\arraybackslash} m{1.3cm} |>{\centering\arraybackslash}m{3cm}| >{\centering\arraybackslash}m{4.5cm} |>{\centering\arraybackslash}m{3.2cm} } 

    & Paths
    & $\TTT_\Delta$ 
    & $\TTT$
    & $\GG_\Delta$ 
    & $\HH_d$ \\ 
    
   \hline
   $r(\cdot)$ 
   & $\Theta(n)$  
   & $\Theta(n)$  
   & $\Theta(n)$  
   & $\Theta(n)$ \cite{chvatal1983ramsey} 
   & $\Theta(n)$ \cite{lee2017ramsey}   
  \\
 \hline
    $\rsi(\cdot)$
    & $\Theta(n)$ \cite{beck1983size} & $\Theta(n)$ \cite{friedman1987expanding} 
    & ${n^2/4\leq \cdot\leq n^3\log^4 n}$
    \cite{beck1990size}, \cite{beck1990size}
    & {$cne^{c\sqrt{\log n}}< \cdot < n^{2-1/\Delta-o(1)}$} \cite{rodl2000size,tikhomirov2022bounded}, \cite{kohayakawa2011sparse}  &  $\Theta(n^2)$   
    \cite{beck1990size},\cite{lee2017ramsey}
    \\ 
   
   \hline
    $\rind(\cdot)$
    & $\Theta(n)$ 
    \cite{haxell1995induced}
    & \textcolor{red}{$\Theta(n)$} & ${\omega(n)\leq \cdot\leq n^2\log^2 n }$ 
   \cite{fox2008induced}, \cite{beck1990size}
    & $ \cdot < n^{O(\Delta)}$

    \cite{conlon2014extremal}
    &  $\omega(n) < \cdot < n^{O(d\log d)}$ \cite{fox2008induced},\cite{fox2008induced} 
\\
 \hline
    $\rsind(\cdot)$
    & $\Theta(n)$ \cite{haxell1995induced} & \textcolor{red}{$\Theta(n)$} 
    & ${n^2/4\leq \cdot\leq n^3\log^4 n }$
    \cite{beck1990size}, \cite{beck1990size}
    & {$cne^{c\sqrt{\log n}}<\cdot< n^{O(\Delta)}$}
    \cite{rodl2000size,tikhomirov2022bounded},\cite{conlon2014extremal} &  {$n^2/4<\cdot < n^{O(d\log d)}$} \cite{beck1990size},\cite{fox2008induced}  
\end{tabular}
 \caption{\label{ramsey-table}
    Bounds for the maximum ramsey number of an $n$-vertex graph from each family; paths, bounded degree trees, trees, bounded degree graphs, bounded degeneracy graphs.
    Original results are in \textcolor{red}{red}.}
\end{table}

For size ramsey numbers linear bounds for paths  were proven by \cite{beck1983size}.
This answered a question from the seminal paper of Erd\H{o}s, Faudree, Schelp and Rousseau \cite{erdHos1978size} for which Erd\H{o}s had later offered $\$100$ \cite{erdHos1981combinatorial}. 
With an impressive application of the the probabilistic method Beck \cite{beck1983size} also proved an upper bound of $C_\Delta n\log^{12}n$ for trees of maximum degree $\Delta$, while he conjectured that a bound of $C_\Delta n$ should hold.
Friedman and Pippenger's \cite{friedman1987expanding}  ``beautiful" result proved  a linear upper bound and this was tightened by Haxell and Kohayakawa \cite{haxell1995size} via a subtle anyalysis of Friedman and Pippenger's method, resolving Beck's conjecture\footnote{Beck actually made an even more specific conjecture for each tree, but we will not discuss it here.}. 
On the other hand it was shown that no such bounds are possible for general trees by Beck \cite{beck1990size}  or graphs of maximium degree $3$ by R{\"o}dl and Szemer\'edi \cite{rodl2000size}. 
R{\"o}dl and Szemer\'edi further conjectured that for all $\Delta$ there exists $\epsilon>0$ such that for all large $n$ the maximum size ramsey number of maximum degree $\Delta$ graphs on $n$ vertices is between $n^{1+\eps}$ and $n^{2-\eps}$. 
This upper bound was settled by Kohayakawa, R{\"o}dl, Schact and Szemer\'edi \cite{kohayakawa2011sparse} 
while for the lower bound the best result is due to Tikhomirov \cite{tikhomirov2022bounded} who significantly improved the bound of \cite{rodl2000size} through a clever random twist on their construction.

For induced ramsey numbers (and in fact size induced ramsey numbers), a linear bound for paths was proved by Haxell, Kohayakawa and {\L}uczak \cite{haxell1995induced}.
 Fox and Sudakov showed that no linear upper bound for induced ramsey numbers of trees was possible, while remarkably the case of bounded degree graphs remains wide open.
Indeed, no non-trivial (super-linear) lower bound is known while the best upper bound due to conlon, Fox and Zhao \cite{conlon2014extremal} is $n^{C\Delta}$. 
In \cite{fox2008induced} the authors asked if there exists a constant $C$, independent of $\Delta$, such that the induced ramsey number of $n$-vertex graphs with maximum degree at most $\Delta$ is at most a polynomial in $n$ of degree at most $C$ (the coefficients may depend on $\Delta$).

Of course lower bounds from size ramsey numbers yield lower bounds for induced size ramsey numbers, thus neither bounded degree graphs nor trees satisfy  linear upper bounds. 
As was remarked in the paper of Bradac, Dragani\'c and Sudakov \cite{bradavc2023effective}: "...for bounded degree trees we know that the size-Ramsey number is linear in their number of
vertices, whereas for its induced counterpart we have no good bounds while we have every reason to believe
that the answer should also be linear".
We prove the first linear bounds for induced and size induced ramsey numbers of general bounded degree trees. 

\begin{theorem}\label{thm: 2coloursizeramsey}
    For all $\Delta \in \mathbb{N}$ there exists $C_\Delta$ such that for any tree $T$ of maximum degree at most $\Delta$ on $n$ vertices $\rsind (T) < C_\Delta \cdot n$. One can take $C_\Delta = 10^{25}\Delta^3\log(\Delta)$.
\end{theorem}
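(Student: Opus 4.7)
The plan is to show that a sparse random graph serves as host. Let $N := c_1 \Delta n$ and $p := c_2 \Delta \log\Delta / n$ for sufficiently large constants $c_1, c_2$, and let $G \sim G(N, p)$. The expected number of edges is $\binom{N}{2}p = \Theta(c_1^2 c_2 \cdot \Delta^3 \log \Delta \cdot n)$, matching the target $C_\Delta \cdot n = 10^{25}\Delta^3\log\Delta \cdot n$ for appropriate constants.

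A Chernoff/first-moment calculation shows $G$ satisfies with positive probability:
\begin{enumerate}[(i)]
\item $Np/2 \le \delta(G) \le \Delta(G) \le 2Np$, so in particular $\delta(G) \ge 2\cdot 10^7\Delta$ and $\Delta(G) \le \exp(\Delta/10^9)$ (the latter bound holds provided $\Delta$ is above an absolute constant; small $\Delta$ is absorbed into the constants in $C_\Delta$);
\item every vertex subset of size at most $M := (10^7\Delta + 1)n$ induces in $G$ a subgraph of average degree at most $12/5$;
\item a uniform pseudorandomness condition across all 2-colourings: for every 2-colouring $c: E(G) \to \{\mathrm{R}, \mathrm{B}\}$ there is a colour (WLOG red) and a set $U \subseteq V(G)$ with $|U| \ge N/4$ such that $\delta(G_R[U]) \ge 10^7\Delta$.
\end{enumerate}
For (iii): a pigeonhole at each vertex gives a set $V^\star$ of $\ge N/2$ vertices on which red is majority, hence $d_R(v) \ge Np/4 \ge 10^7\Delta$ in $G$; an iterative pruning that removes vertices with too few red neighbours in the current subset, controlled by (ii), terminates with $U \subseteq V^\star$ of size at least $N/4$.

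Now fix a 2-colouring. By (i)--(iii), the induced red subgraph $G_R[U]$ satisfies the three hypotheses of Theorem \ref{thm:main_simp}: minimum degree (from (iii)), maximum degree (inherited from (i)) and sparsity (inherited from (ii)). Theorem \ref{thm:main_simp} then yields an induced copy of $T$ in $G_R[U]$, which is monochromatically red. The remaining subtlety is that this copy is guaranteed only to be induced in $G_R[U]$, not in $G$: blue edges within its vertex set would violate the induced-in-$G$ requirement of $\rsind$. By (ii), any $n$-subset of $U$ spans at most $6n/5$ edges of $G$ in total, and hence at most $n/5+1$ potential blue obstructions beyond the tree edges. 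We remove this obstacle by re-entering the proof of Theorem \ref{thm:main_simp} rather than using it as a black box, modifying the Pre-Emptive Greedy Algorithm so that, when extending a leaf, it additionally forbids candidates that send a blue edge to the current image. By (ii) the blue density near any partial image is controlled, and with $c_1, c_2$ chosen large enough the red expansion driving the algorithm absorbs this extra blue cost; the modified greedy succeeds and returns a monochromatically red embedding that is induced in $G$.

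The main obstacle is controlling the blue-edge obstructions throughout the greedy procedure, which forces us to open up the proof of Theorem \ref{thm:main_simp}. The $\Delta^3 \log \Delta$ dependence in $C_\Delta$ ultimately arises from the density needed in $G$ so that the red expansion survives both the restriction to one colour class and the additional blue-edge constraints.
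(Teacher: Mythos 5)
Your high-level plan — host the colouring on a sparse random graph, verify density and degree conditions, pigeonhole to a colour class with enough edges, and apply a Friedman--Pippenger-type embedding theorem — matches the paper's strategy, and you correctly identify the key subtlety: an embedding that is induced in the red subgraph need not be induced in $G$, while $\rsind$ demands the latter.

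However, your proposed resolution — ``re-entering the proof of Theorem~\ref{thm:main_simp} \dots\ modifying the Pre-Emptive Greedy Algorithm so that, when extending a leaf, it additionally forbids candidates that send a blue edge to the current image'' — leaves the crucial step unproved, and the modification is not as local as your phrasing suggests. The escape-way machinery requires the \emph{entire} set $\Vin(B_i)$, not merely the current tree image, to induce a subgraph of $B_i$ in the ambient graph $G$; the criticality cascades must be run with respect to $G$ (so that vertices become critical because of blue edges too); and the reservation step (Lemma~\ref{lem: ISM}) must reserve only $J$-edges while measuring availability against $G$. These constraints interact with the average-degree condition and with the degree of the reservation lemma in ways you have not checked. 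The paper anticipates all of this by proving the two-graph Theorem~\ref{thm: main_simplev2}: if $G$ satisfies the maximum-degree and local-sparsity hypotheses and $J\subseteq G$ is a spanning subgraph of large minimum degree, then $J$ contains every bounded-degree tree of the stated order as an induced subgraph \emph{of $G$}. Your argument should invoke Theorem~\ref{thm: main_simplev2} with the cleaned-up random graph as $G$ and the majority colour class (pruned to large minimum degree) as $J$, rather than re-deriving it; as written, the heart of the proof is a promissory note. The paper in fact obtains Theorem~\ref{thm: 2coloursizeramsey} as the $\varepsilon=1/2$ case of the stronger density-universality Theorem~\ref{thm: sizeramsey}, which packages the pigeonhole/pruning you describe once and for all.
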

Of course one can replace $\rsind$ by $\rind$ for free.
While this paper was in preparation Hunter and Sudakov~\cite{HunterSudakov} also proved the above theorem.
They proceed by very different techniques, cleverly reducing the problem to the non-induced case, via carefully constructed subgraphs of blowups.  
Their methods give worse bounds (an exponential dependency on $\Delta$) but one nice thing is that it extends to graphs of bounded treewidth. 
In order to do this they once again reduce to the non-induced case which was proved by \cite{berger2021size} (the two-colour case was proved simultaneously by \cite{kamcev2021size}). 
These results in turn use Friedman and Pippenger's result as a black box. 
Thus, it is very natural to ask if one can use our induced Friedman and Pippenger type result to avoid any reduction to the non-induced case and improve the quantitative bounds due to Hunter and Sudakov (which are very large due to the use of sparse regularity).

In fact, we prove something much stronger than the above statement. 
Given a family of graphs $\GG$ a \emph{Universal graph} for $\GG$ is a graph that contains all graphs of $\GG$ as subgraphs. 
The study of such objects goes back at least as far as Moon \cite{moon1965minimal}, with the central question being bounds on their size and order.
The induced question is also well studied \cite{chung1983universal} and now
it is even known that an induced universal graph of order $O(n)$ exists for the family of trees on $n$ vertices (even unbounded degrees) \cite{alstrup2017optimal}.
The constructions are far from random. 
If in any colouring of our graph $G$ we have a monochromatic (induced) universal graph then we say that our graph is (induced) partition universal. 
An induced such result (also going by the name adjacency labelling) for bounded degree graphs is what was actually proved in \cite{kohayakawa2011sparse}.
Of course a density universal result is stronger again, that is, a result that says: in any subgraph of density $\eps>0$ one finds all of said subgraphs. 
This is what we have proven; a density universal theorem for bounded degree trees of order $n$ with an upper bound \textit{on the number of edges} that is linear in $n$. 
\begin{theorem}\label{thm: sizeramsey}
    For all $\Delta,n\in \mathbb{N}$ and $\eps >0$ there exists a graph $G$ with less than $C(\Delta,\eps) \cdot n$ edges such that any subgraph $J\subset G$ containing $\eps\cdot e(G)$ edges contains every tree of maximum degree $\Delta$ and order at most $n$ as an induced subgraph of $G$. One can take $C(\Delta,\eps) = \left(10^{42}\Delta^3\log(\Delta)\log(\frac{1}{\varepsilon})^3\right)/\eps^2$.
\end{theorem}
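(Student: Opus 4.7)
The plan is to take $G$ to be a random bipartite graph with linearly many edges and show that any subgraph $J$ of density $\varepsilon$ contains a subgraph $J^\ast$ to which the Pre-Emptive Greedy Algorithm underlying Theorem~\ref{thm:main_simp} applies, in a way that the tree copy produced in $J^\ast$ is automatically induced in the original $G$. The bipartite host is crucial: embedding $T$ along its own bipartition makes all tree-pairs at tree-distance two land on the same side of $G$, hence non-adjacent in $G$ for free, and reduces the induced constraint to ruling out cross-part non-tree $G$-edges.

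Set $N = C_1(\Delta,\varepsilon)\,n$ and let $d = d(\Delta,\varepsilon)$ be a sufficiently large constant, and take $G = G(N,N,d/N)$, a random balanced bipartite graph, so $e(G) = \Theta_{\Delta,\varepsilon}(n)$. Whp $G$ satisfies: (i) $\Delta(G) \leq \exp(\Delta/10^9)$ after tuning constants; (ii) every subgraph on at most $(10^7\Delta+1)n$ vertices has average degree at most $12/5$, by a first-moment union bound using that $np = d$ is constant; and (iii) $|N_G(X)| \geq (10^7\Delta+1)|X|$ for all such $X$, once $d$ is large enough. Given $J \subseteq G$ with $e(J) \geq \varepsilon e(G)$, iteratively delete vertices of $J$-degree $< \varepsilon d/4$; since this procedure discards at most half the edges of $J$, the resulting $J^\ast$ has minimum degree at least $10^7\Delta$ on $\Omega_{\Delta,\varepsilon}(n)$ vertices. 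Conditions (i), (ii) pass to $J^\ast$ trivially, and the expansion condition transfers to $J^\ast$ because otherwise the deficient pair $(X, N_{J^\ast}(X))$ would force a small subgraph of $G$ of average degree exceeding $12/5$, contradicting (ii).

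We then run the Pre-Emptive Greedy Algorithm to embed $T$ inside $J^\ast$, aligning the bipartition of $T$ with that of $G$; this produces a copy induced in $J^\ast$, in which any pair of tree-vertices at tree-distance two is automatically $G$-non-adjacent by bipartiteness. To exclude extra $G$-edges between non-tree-adjacent cross-part pairs, we augment the algorithm with a forbidden-edge list consisting of the $(G\setminus J)$-edges incident to the current partial embedding; because $G\setminus J$ has at most $(1-\varepsilon)e(G)$ edges and inherits (i), (ii), the list is short compared to the available expansion slack, and the algorithm still succeeds. The main obstacle is proving exactly this last claim: one must verify that the Pre-Emptive Greedy Algorithm tolerates the forbidden-edge constraints imposed by the ``blue'' subgraph $G\setminus J$, and it is this analysis that pins down the $\varepsilon^{-2}$ and $\log^3(1/\varepsilon)$ factors in the stated bound on $C(\Delta,\varepsilon)$.
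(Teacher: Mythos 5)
Your high-level outline (random sparse host, clean it up, pass to a dense monochromatic subgraph, run the embedding machine there) matches the paper's strategy, but you leave the hardest step as an unsolved problem and add a detour that does not resolve it. You write ``we augment the algorithm with a forbidden-edge list consisting of the $(G\setminus J)$-edges incident to the current partial embedding$\ldots$ one must verify that the Pre-Emptive Greedy Algorithm tolerates the forbidden-edge constraints''---and this verification is precisely the substance of the paper's Theorem~\ref{thm: main_simplev2}, which is stated and proved for exactly this reason: it takes a host $G$ and a spanning subgraph $J\subset G$ with min-degree $10^7\Delta$, and embeds the tree into $J$ so that it is \emph{induced in $G$}. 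Lemma~\ref{lem: ISM} is likewise stated with the two-graph structure (``a spanning subgraph $G$'' of the ambient graph $F$) for this very application. So your ``main obstacle'' is not a modification of Theorem~\ref{thm:main_simp}; it is the paper's actual main technical theorem, and your proposal treats it as open.

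The bipartite detour also buys less than you claim. Making $G$ bipartite and aligning the tree's bipartition does kill $G$-edges between tree vertices at even tree-distance, but the forbidden $G$-edges you must avoid are exactly those between cross-part pairs at odd tree-distance $\geq 3$---and these are precisely the pairs that the forbidden-edge machinery has to handle anyway. Bipartiteness does not shrink that problem; it only relabels it. It also costs you: the Pre-Emptive Greedy Algorithm is presented as an online game against an adversary placing tree vertices one at a time, and tying each new vertex to a fixed side of $G$ requires you to re-examine the whole process and the expansion/criticality bookkeeping (e.g.\ the bootstrap percolation in Lemma~\ref{prop:density_of_critical} is defined via distance-$2$ reachability, which behaves differently in a bipartite host). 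The paper avoids all of this by simply using $G(N,d/N)$ with $N,d$ chosen so that after a cleaning step the cleaned graph $G'''$ and the dense subgraph $J'\subset J$ satisfy the hypotheses of Theorem~\ref{thm: main_simplev2} directly, with an appropriately inflated $\Delta := 10^6\Delta\log(1/\varepsilon)$ to absorb the max-degree hypothesis $\Delta(G''')\leq 20d \leq \exp(\Delta'/10^9)$. That inflation, together with $d\propto \Delta\log(1/\varepsilon)/\varepsilon$ and $N\propto n/\varepsilon$, is where the $\varepsilon^{-2}\log^3(1/\varepsilon)$ actually comes from---not from any forbidden-edge analysis. Finally, a minor point: your condition (iii), a vertex-expansion lower bound, is not among the hypotheses of Theorem~\ref{thm:main_simp}; the theorem only needs min-degree, max-degree, and the local average-degree bound, so inserting (iii) suggests a Friedman--Pippenger-style reading that the paper deliberately departs from.
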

We remark that Butler \cite{butler2009induced} showed that there is no induced universal graph of linear order for bounded degree graphs, thus one cannot replace bounded degree trees by general bounded degree graphs in the above theorem, even if one sets $\eps=1$ and replaces the bound on the number of edges by the same bound on the order.
It was also shown by Chung and Graham \cite{chung1983universal}, that even with $\eps=1$, if we remove the bounded degree condition the theorem above does not work\footnote{Further, a minor adjustment to the proof allows one to prove the above theorem with  a larger constant, for bounded degree forests (instead of trees), see the discussion of rolling back in the conclusion.}.
In order to see the strength of the constant, note that this implies state of the art bounds even in the case of the multicolour size ramsey number of induced paths (see Theorem \ref{thm: sizeramseyqcolours} and discussion).

\subsection{Dense Induced Forests in Countable Sparse Expanders}
Friedman and Pippenger's result also applies to countable graphs, where it is best phrased in terms of Cheeger's constant. 
For a countable graph $G$ we define the vertex Cheeger constant as 
\[
h_v(G) \coloneqq \inf_{X}\left\{\frac{|N(X)\backslash X|}{|X|}   \right\}, 
\]
where the infimum is over all finite  sets $X\subset V(G)$. 
Friedman and Pippenger's result (because it is online) implies that if $d>3$ and $h(G) \geq d$ then $G$ contains a tree with $h(T) \geq d-2$. 
Benjamini and Schramm \cite{benjamini1997every} proved the following stronger theorem about \emph{spanning} forests, which is actually a characterisation in the case of graphs with integer Cheeger constant. 
\begin{theorem}[Benjamini-Schramm \cite{benjamini1997every}]
    Suppose $d\geq 0$ is an integer and $G$ is a graph with $h(G)\geq d$. 
    Then $G$ has a rooted spanning forest in which the roots have degree $d$ and all other vertices have degree $d+2$.
\end{theorem}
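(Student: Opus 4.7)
My plan is to combine a compactness argument with a Hall-type matching, using the expansion hypothesis $h(G)\geq d$ as the source of Hall's condition.  First I would dispose of the case $d=0$ (take each vertex as its own singleton rooted component) and restrict to a single connected component of $G$, so $G$ is countable, connected and satisfies $h(G)\geq d\geq 1$.  A rooted spanning forest with the required degree sequence is the same data as a ``parent function'' $p\colon V(G)\setminus R\to V(G)$ with $p(v)\in N_G(v)$, at most $d+1$ preimages per vertex (and exactly $d$ preimages for each $r\in R$), and no cycles or infinite ascending chains outside $R$.

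I would enumerate $V(G)=\{v_1,v_2,\dots\}$ and fix a finite exhaustion $V_1\subset V_2\subset\cdots$.  Assuming $G$ is locally finite, a standard K\"onig-style diagonal argument reduces the problem to producing, for each $n$, a parent function on some finite region containing $V_n$ that is a forest and has the correct \emph{exact} degrees on $V_n$.  To produce each such finite approximation I would apply Hall's theorem to the bipartite graph $H$ with right part $V(G)$ (``children seeking a parent'') and left part $V(G)\times[d+1]$ (``parent-slots''), edges $v^{(i)}\sim u$ iff $uv\in E(G)$.  The hypothesis yields, for any finite $S$ on the right,
\[
|N_H(S)|=(d+1)|N_G(S)|\geq (d+1)|N_G(S)\setminus S|\geq (d+1)d\,|S|\geq |S|,
\]
so Hall's condition holds; designating a chosen vertex of $V_n$ as an unmatched root then gives a matching producing a candidate $p$ in which every non-root has exactly one parent and every vertex has at most $d+1$ children.

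The main obstacle is upgrading this candidate to a genuine forest with the \emph{prescribed equalities}: na\"ively, $p$ may contain cycles, and some non-roots may end up with strictly fewer than $d+1$ children.  To deal with this I would interleave the matching step with an alternating-rotation / augmenting-path procedure of P\'osa--Friedman--Pippenger type.  Any bad cycle or deficient vertex sits in a finite subgraph, and the $h(G)\geq d$ expansion always supplies an unused external neighbour that can be promoted (along a short alternating sequence of parent-reassignments) either into a missing child slot or into position to break a cycle.  Performed level by level inside each $V_n$-approximation and glued together by the compactness step, this yields the desired rooted spanning forest.  I expect the quantitative bookkeeping of ``defect-removal via expansion'' -- in particular, showing that the alternating sequences close up in finitely many steps and that the compactness limit really satisfies the exact degree equalities -- to be the main technical hurdle.
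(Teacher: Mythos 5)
The paper does not prove this theorem. It is an imported result, cited to Benjamini and Schramm \cite{benjamini1997every} purely to motivate and frame Theorem~\ref{thm:countable}; there is no proof in the paper to compare your proposal against.

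As for your proposal on its own terms: the Hall-type starting point is sensible, but you have explicitly flagged the heart of the argument (``defect-removal via expansion'') as unfinished, and that step is exactly where the content of the theorem lives. Concretely: the matching produced by Hall's theorem only guarantees \emph{at most} $d+1$ children per slot-vertex, whereas the theorem needs \emph{exactly} $d+1$ children for every non-root; the matching gives no acyclicity, and functional cycles in a parent map cannot always be broken by a local reassignment without disturbing other exact-degree constraints; and the compactness step is subtler than described, because a \emph{finite} forest cannot satisfy the exact degree prescription (the handshake lemma forces deficient boundary vertices in every finite approximant), so the approximants must carry a carefully controlled degree defect on the boundary and the limit must be shown to wash it out. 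None of this is carried out. The Hall inequality you verify, $(d+1)\lvert N_G(S)\rvert \geq \lvert S\rvert$, uses only a tiny fraction of the hypothesis $\lvert N_G(S)\setminus S\rvert\geq d\lvert S\rvert$, which is another warning sign: the theorem is at threshold (e.g.\ for the $(d+2)$-regular tree $h_v = d$ and the conclusion is barely achievable), so an argument with this much slack in the matching step must put the full strength of the expansion hypothesis into the augmenting-path phase --- precisely the phase you have left as a ``technical hurdle.'' So there is a genuine gap: the proposal sets up scaffolding for a proof but does not supply the essential augmentation/acyclicity/degree-saturation argument.
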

One can also define the Cheeger constant in terms of edge boundaries (this is perhaps the more classical quanitity), 
\[
h(G) \coloneqq \inf_{X}\left\{\frac{e(X,G\backslash X)}{|X|}   \right\}, 
\]
where again the infimum is over finite sets $X\subset V(G)$. 
The use of the edge boundary is essentially forced on you if you are looking for induced structures.
For regular graphs $G$ this Cheeger constant is dual to the supremum of average degree over all finite subgraphs.
Our technique extends, as Friedman and Pippenger's did, to finding an induced tree in countable graphs and in fact it immediately yields a spanning version, in the spirit of Benjamini and Schramm. 
The attentive reader will correctly object that there is only one induced spanning subgraph of a graph.
Our result is not spanning but is \emph{as spanning as possible} (given the degrees of the induced forest we embed). 
A \emph{pseudoforest} is a graph in which every component contains at most one cycle. 
A $\Delta$-ary pseudoforest is a pseudoforest in which all acyclic components are $\Delta$-ary trees and all other components are $(\Delta+1)$-regular.
A subgraph is \emph{component-wise induced} if each component is an induced subgraph.

\begin{theorem}\label{thm:countable}
    There exists $\eps>0$ such that if 
    $G$ is a $d$-regular graph with $h(G) > d - 3 + \frac{1}{10^7\Delta+1}$ for some $\Delta < \eps d$,
    then $G$ contains a spanning $\Delta$-ary pseudoforest $F$, that is component-wise induced, with the property that one can turn $F$ into an induced forest by deleting one vertex from each of its components.
\end{theorem}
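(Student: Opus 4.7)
The plan is to lift the finite PEG (Pre-Emptive Greedy) algorithm underlying Theorem \ref{thm:main_simp} to the countable setting and construct the desired pseudoforest as a monotone limit of finite approximations. First I would translate the Cheeger hypothesis into the format used by Theorem \ref{thm:main_simp}. Since $G$ is $d$-regular, for every finite $X \subset V(G)$ one has $2e(G[X]) = d|X| - e(X, V(G)\setminus X) < \bigl(3 - \frac{1}{10^7\Delta+1}\bigr)|X|$, so every finite induced subgraph has average degree strictly below $3 - \frac{1}{10^7\Delta+1}$. The same algebraic rearrangement (applied to $N(X)\cup X$) shows $|N(X)\setminus X| \geq (10^7\Delta + 1)|X|$ whenever $G[X]$ has average degree close to $2$, giving the vertex-neighbourhood expansion that the PEG algorithm requires.

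Fix an enumeration $V(G) = \{v_1, v_2, \ldots\}$ and build, inductively, a nested sequence of finite component-wise induced $\Delta$-ary pseudoforests $F_0 \subset F_1 \subset \cdots$ with $v_n \in V(F_n)$ for every $n$. At step $n$, if $v_n \in V(F_{n-1})$ set $F_n = F_{n-1}$; otherwise either attach $v_n$ as a new leaf of an existing tree component whose growth boundary contains $v_n$, seed a new tree component rooted at $v_n$, or close an existing tree component into a $(\Delta+1)$-regular pseudotree by adding $v_n$ together with the unique extra chord incident to it. The invariant I would maintain, essentially verbatim from the finite PEG analysis, is that every $F_n$-vertex of current $F_n$-degree below $\Delta$ retains a large reservoir of $G$-neighbours in $V(G)\setminus V(F_n)$ which are also non-adjacent in $G$ to the rest of its $F_n$-component. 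The extension lemma of Theorem \ref{thm:main_simp} (a Hall / local-lemma argument driven by the expansion hypothesis and the low-average-degree conclusion above) is precisely what forces this invariant to be preserved at every step.

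The main obstacle is enforcing the ``at most one cycle per component'' property so that a single well-chosen vertex per component yields an induced forest upon deletion. This is handled by the pre-emptive flavour of the algorithm: when a vertex $u$ is absorbed into a given component, any further $G$-edges from $u$ into that component are immediately removed from the reservoirs of the component's vertices, so no second chord can ever be attached later. Once a component has acquired its (at most one) chord it is declared closed and is never grown again; any subsequent $v_n$ lying in its $G$-neighbourhood is instead absorbed into a neighbouring component or seeds a new one. Taking $F := \bigcup_n F_n$ produces a spanning component-wise induced $\Delta$-ary pseudoforest; deleting a leaf from each acyclic component and a vertex on the unique cycle from each cyclic component yields the induced forest claimed in the statement.
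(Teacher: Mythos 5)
The high-level plan (run the PEG algorithm as a monotone limit over an enumeration of $V(G)$, using the Cheeger bound to control average degree of finite subgraphs) is indeed what the paper does, and your translation of $h(G)$ into an average degree bound is correct. But two steps in your write-up are genuinely off.

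First, the deletion step at the very end is wrong. You propose ``deleting a leaf from each acyclic component,'' but deleting a leaf does nothing to make the final forest \emph{induced}. The paper's proof instead tracks the orientation of the pseudoforest as an escape-way $D$: in a tree component of $D$ the unique vertex of in-degree $0$ is the root, and by the escape-way axiom the set $\Vin(D)$ (vertices with an in-neighbour) induces a subgraph of $D$ in $G$. So the correct move is to delete the \emph{root} (in-degree-$0$ vertex) of each acyclic component, and an arbitrary vertex on the unique cycle of each cyclic component; after this every surviving vertex lies in $\Vin(D)$, which is what forces inducedness. Deleting a leaf (which is in $\Vin(D)$ and hence already ``safe'') simply does not serve this purpose, and in an infinite $\Delta$-ary tree there may not even be any leaves.

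Second, and more importantly, your invariant only controls edges \emph{within} a component (``non-adjacent in $G$ to the rest of its $F_n$-component''), but the claimed conclusion requires that, after deletion, the union of all components is an induced forest, which also forbids stray $G$-edges \emph{between} distinct components whose endpoints survive the deletion. Nothing in your construction or invariant rules these out. The paper's escape-way framework does exactly this job globally: the condition that $\Vin(D)$ induces a subgraph of $D$ is a single coherent invariant that simultaneously encodes component-wise inducedness and cross-component inducedness of the surviving vertex set, and your proposal would need an explicit substitute for it rather than a per-component reservoir condition.
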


There is only one $d$-regular graph with $h(G)> d - 2$, and that is the $d$-regular tree, and if $h(G)=d-2$ then $G$ is a pseudoforest. 
Our result says that we can weaken this condition and still find a "dense" family of induced trees. 
The induced forest intersects all neighbourhoods in at least $\Delta$ vertices which is essentially best possible. 
Further, one cannot weaken the bound on Cheeger's constant by much (even without the component-wise induced condition), as is witnessed by blowing up the edges of the $d/2$-regular tree by $K_{2,2}$'s (one replaces the vertices with independent sets). 
This graph has $h(G) = d - 4$ but does not even contain a binary spanning pseudoforest with the properties described in Theorem \ref{thm:countable} 
\footnote{To see this we will refer to the vertices that are to be deleted in the statement of Theorem \ref{thm:countable} as roots of $F$.
If there is any pair $u$ and $v$ of twin vertices with $v$ not a root then $v$ must be in the same component of $F$ as $u$ (deleting $u$ leaves an induced tree containing neighbours of $u$). Call this component $C$. 
By the inducedness of $T=C-r$ (where $r$ is the root of $C$) all neighbours of $u$ in the tree  are neighbours of $v$ in the tree. 
It follows that there is at least one $4$-cycle in $C$ containing $u$ and $v$, but then the component must have been $3$-regular (by the definition of a binary pseudoforest) in which case there are three $4$-cycles, a contradiction.
Thus all pairs of twins are both roots, and so are all vertices, which is clearly impossible.}.



\subsection*{Notation}
Let $N(v)$ denote the open neighbourhood of $v$ and let $N[v] \coloneqq N(v) \cup v$ be the closed. 
For a set $X\subset V(G)$ we let $N(X) \coloneqq \cup_{v \in X}N(v)$ and $N[X] = X \cup_{v \in X}N(v) $.

\section{Proof Overview}
Our approach is inspired by the proof of Aggarwal, Bar-Noy, Coppersmith, Ramaswami, Schieber and Sudan \cite{aggarwal1996efficient},
who showed that a certain online matching game was winnable in robustly expanding bipartite graphs. 
The online matching game goes as follows. 
We have a graph $H=(A,B)$ and $X\subset A$ matched to a set $Y\subset B$ (initially these will be empty). 
The adversary picks an element $x\in A\backslash X$ and we have to choose an element $y\in B\backslash Y$ and match $X\cup x$ to $Y\cup y$.

\begin{theorem}\label{thm:bipartite}
    Let $H=(A,B)$ be a bipartite subgraph. Suppose that for all subgraphs $F\subset H$ with $\deg_F(v) > \deg_H(v)/2$ for all $v\in V(H)$ we have $|N_F(X)| > 2|X|$ for all $X\subset A$ with $|X| \leq n$.
    Then there is a polynomial time algorithm to find an online matching of order $n$ in $A$.
\end{theorem}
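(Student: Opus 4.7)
The plan is to implement the \emph{pre-emptive greedy} algorithm via augmenting paths. We maintain a matching $M\subseteq E(H)$, initially empty, and when the adversary presents a new vertex $x\in A\setminus X$ we run a breadth-first search in the alternating-path digraph rooted at $x$ (from vertices in $A$ we follow non-matching edges of $H$; from vertices in $B$ we follow matching edges of $M$) until we reach an unmatched vertex $y\in B\setminus Y$; flipping the resulting augmenting path produces a matching of size $|M|+1$. The algorithm is manifestly online and each BFS runs in polynomial time, so the overall procedure is polynomial.

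The heart of the argument is to show that such an augmenting path must exist whenever $|M|<n$. Let $S\subseteq A$ and $T\subseteq B$ be the sets of vertices visited by the BFS, and suppose for contradiction that no unmatched vertex is reached, so every $y\in T$ is matched in $M$. A layer-by-layer inspection of the BFS gives $T=N_H(S)$, and since $y\mapsto M(y)$ sends each $y\in T$ to a vertex of $S\setminus\{x\}$ (BFS extends by the matching edge), this map is an injection $N_H(S)\hookrightarrow S\setminus\{x\}$, yielding
\[
|N_H(S)|\le |S|-1.
\]
On the other hand, every vertex of $S\setminus\{x\}$ is matched in $M$, so $|S|\le|M|+1\le n$, and applying the expansion hypothesis with $F=H$ (which trivially satisfies $\deg_F(v)>\deg_H(v)/2$ on the support of $H$) gives $|N_H(S)|>2|S|$; combined with the previous inequality this forces $|S|-1>2|S|$, an absurdity.

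The only potential obstacle is cosmetic bookkeeping around isolated vertices of $H$, easily handled by restricting to the support of $H$ at the outset so that the hypothesis with $F=H$ is literally applicable. It is worth remarking that only the $F=H$ case of the robust expansion hypothesis enters this proof; the full strength of the statement (allowing arbitrary $F\subset H$ with more than half the degree retained) is presumably distilled in this form because it arises naturally and is preserved under the incremental edge-deletions performed in the tree-embedding procedure that this bipartite lemma is designed to feed into.
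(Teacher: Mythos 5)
Your argument is not a proof of the stated theorem, because it violates the ``online'' constraint that is the whole point of the game. In the game (following \cite{aggarwal1996efficient} and the description preceding the theorem), once the adversary presents $x$ and you answer with $y$, the pair $xy$ is committed and cannot be undone; you must extend the existing matching, not replace it. Your algorithm flips an augmenting path, which re-assigns partners of vertices that were matched in earlier rounds --- this is exactly the operation the online model forbids. (If re-matching were permitted, the problem would be trivial: Hall's condition $|N_H(X)|\ge|X|$ would suffice, and the statement's robust expansion hypothesis --- expansion by a factor $2$, \emph{persisting after deleting up to half the edges at each vertex} --- would be pointless. You noticed yourself that your proof only uses the $F=H$ case; that is precisely the symptom of having solved the wrong problem.)

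The intended proof is the ``pre-emptive greedy algorithm'' sketched in the paper's overview: one maintains, alongside the committed matching, a \emph{reserve} assignment for any vertex of $A$ that is in danger of having all its available neighbours consumed, and whenever a criticality cascade threatens one replenishes the reserves for the whole affected set simultaneously via a (non-online, hence re-routable) Hall/augmenting-path argument on the pool of \emph{uncommitted} edges. The robustness of the expansion hypothesis is what guarantees that enough uncommitted edges survive to carry out these repeated replenishments. Note that the paper itself does not re-prove Theorem~\ref{thm:bipartite}; it cites \cite{aggarwal1996efficient} and uses the high-level strategy as inspiration for the induced-tree embedding. So the fix is not a small patch to your BFS: you need a genuinely different algorithm that never retracts a committed match, along the lines just described.
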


We call the high-level strategy used for Theorem \ref{thm:bipartite} and in this paper the Pre-emptive Greedy Algorithm.
It was applied to the problem of efficiently finding linear order bounded degree trees in expanders by \cite{dellamonica2008algorithmic}.
While \cite{dellamonica2008algorithmic} used Theorem \ref{thm:bipartite} as a black box, we describe the high level strategy if one were to open the box and run the argument in the case of bounded degree trees (\underline{not necessarily induced}). 
The high level strategy is the same in our case. 
The game is as follows.

\subsection*{The Game}
We wish to embed a bounded degree tree $T$ in a graph $G$. 
The graph $G$ is given to us, but $T$ is chosen by the adversary one vertex at a time. 
We start from the empty tree $T$, and in each round the adversary adds one vertex to $T$, to obtain $T'$, maintaining that $T'$ has both maximum degree at most $\Delta$ and is a tree. 
In each round we find an embedding of $T'$ in $G$ that extends our earlier embedding of $T$.
Formally we find an injective homomorphism $\phi'(T')$ such that the restriction of $\phi'$ to $T$ is simply $\phi$, the homomorphism we had from the previous round. 
Then in each round our adversary asks us to extend our current tree (embedded in $G$) from a vertex of degree at most $\Delta-1$ (in the embedded tree $T$). 
We lose when we cannot extend, we win if we play $n$ rounds without losing (and thus embed an $n$-vertex tree). 

We now \emph{informally} describe the strategy, leaving all formal definitions to later sections. 

\subsection*{The Pre-Emptive Greedy Algorithm}
Let us begin na\"ively.
If one were to greedily embed the tree, 
one could, after some short amount of time, end up in the following situation. 
The adversary asks you to extend the tree from a vertex $v$, but all of $v$'s neighbours are already in the tree. 
This prevents you from extending your tree without creating a cycle, so you lose. 
You must \emph{pre-empt} this situation. 
One thing you could try would be to watch all the vertices in the graph each time you extend your tree. 
If a vertex $v$ has too many neighbours in the tree i.e. it is \emph{critical}, then one immediately \emph{reserves} some neighbours of $v$ that are not in the tree. 
You \emph{reserve} these vertices for the eventuality that at some point in the future you are asked to extend your tree from $v$. 
You will not use the vertices \emph{unless} you are extending from $v$. 
Note that $v$ is not necessarily in your tree, and it may never be.\footnote{If one wants to tighten Theorem \ref{thm:main_simp} by reducing the factor of $\Delta$, then this wastefulness is one place to start.}
But if you are at $v$ then you can \emph{escape} using the reserved vertices. 

Of course, there is an issue here, there is a risk of a \emph{criticality cascade}. 
That is, when we reserve vertices for $v$ we may take neighbours away from a different almost critical vertex $u$, forcing us to reserve vertices for $u$. 
But doing that may mean that we make another vertex $w$ critical and so on, raising two issues.
\begin{enumerate}
    \item  Perhaps the criticality cascade consumes the whole graph (i.e. it makes every vertex critical). 
    \item Perhaps in reserving vertices for $u$ and $v$ we have used all the neighbours of some other vertex $w$.
\end{enumerate} 
Thus we must be more clever. 
We require expansion properties of $G$ to show that criticality cascades cannot be much larger than the current tree, this prevents problem (1). 
For the second problem, rather than fixing critical vertices one by one, each time we extend our \emph{tree} (not including reserving vertices) we find all the vertices that \emph{might} (we can't predict the future, but we can bound it) be caught in a criticality cascade and call this set $C$ (this is not too big!).
Because they are only \emph{at risk} of becoming critical, the vertices in $C$ are not critical yet and still have many neighbours that are not in the tree. 
If we require many more neighbours than $\Delta$ to be available, then perhaps we can find a way to reserve $\Delta-1$ neighbours for each vertex in $C$ \emph{simultaneously}. 
Aggarwal, Bar-Noy, Coppersmith, Ramaswami, Schieber and Sudan used a robust expansion condition, Hall's Theorem and augmenting paths to show these remedies work in the case of online matchings (the case of trees reduces to that case).  
None of those tools work in the case of induced trees however the high level strategy is similar.
\begin{enumerate}
    \item Extend the tree greedily until some set  $C$ of vertices is at risk of being caught in a \emph{criticality cascade}.
    \item Simultaneously \emph{reserve} neighours for each of the vertices in $C$.
    \item Consider these \emph{reserved} vertices as part of the extended ``tree" and repeat.
\end{enumerate}
To summarise our earlier analysis, in order for this algorithm to work it suffices that: 
\begin{enumerate}
    \item Criticality cascades cannot be much larger than the set that they start from (see Lemma \ref{prop:density_of_critical}).
    \item If $C$ is not too large and each vertex in $C$ has enough neighbours not in the tree, then we can \emph{simultaneously} reserve $\Delta-1$ vertices for each vertex of $C$ (see Lemma \ref{lem: ISM}).
\end{enumerate}

\begin{remark}
    In the above algorithm, we watch every vertex all the time, and actually ensure that we can always extend from any vertex that has not been extended $\Delta$ times yet. 
    Thus we can actually build a spanning forest, this is what allows Theorem \ref{thm:countable}.
\end{remark}

As previously mentioned none of the arguments that were used in the non-induced case work in the induced case.
Thus we must introduce some tools.
These will allow us to prove Lemmas \ref{prop:density_of_critical} and \ref{lem: ISM}, before proving the main theorem.

\subsection{Formal Machinery}
We begin by introducing directed graphs, because directing an edge from $u$ to $v$ will be useful for encoding that we have reserved $u$ for $v$.
An oriented subgraph $D$ of a simple graph $G$ is a subgraph $H\subset G$ along with an orientation for each of its edges $E(H)$. 
We view $D$ as a digraph living on the same vertex set as $G$.
For a vertex $v\in V(G)$, we define the \emph{in-neighbours} of $v$ as $\Nm_D(v) \coloneqq \{u : uv \in E(D)\}$.
The following two propositions will allow us to extend our tree or pseudoforest while only focusing on local information. 
In spite of their simplicity they are crucial to our proof. 
\begin{proposition}\label{prop:}
    A connected graph $T$ has an orientation such that every vertex has in-degree at most $1$ and at least one vertex has in-degree $0$, if and only if $T$ is a tree.  
\end{proposition}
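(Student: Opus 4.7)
The proposition is an elementary statement relating a degree-type orientation condition to the tree property of a connected graph. The plan is to prove both directions by edge counting.

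For the forward direction ($T$ is a tree $\Rightarrow$ such an orientation exists), I would pick an arbitrary root $r \in V(T)$ and orient every edge from parent to child (equivalently, from the endpoint closer to $r$ to the endpoint farther from $r$; since $T$ is a tree this is well-defined because distances in $T$ are realised by unique paths). Then $r$ has in-degree $0$, and every other vertex $v$ has in-degree exactly $1$, namely the edge coming from its unique parent.

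For the reverse direction (a connected $T$ admitting such an orientation is a tree), I would count edges via in-degrees. The number of edges of $T$ equals $\sum_{v \in V(T)} \Nm_D(v)$, which by the hypothesis is at most $(|V(T)|-1)\cdot 1 + 1 \cdot 0 = |V(T)| - 1$ (at least one vertex contributes $0$, and every vertex contributes at most $1$). On the other hand, since $T$ is connected, $|E(T)| \geq |V(T)| - 1$. Hence $|E(T)| = |V(T)| - 1$, and a connected graph on $|V(T)|$ vertices with $|V(T)|-1$ edges is a tree.

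There is no real obstacle here; the only subtlety is remembering that the hypothesis explicitly provides \emph{at least one} vertex of in-degree $0$, which is precisely what pushes the edge count down from $|V(T)|$ (the bound one gets just from "in-degree at most $1$", which would leave unicyclic components admissible) to $|V(T)|-1$. Without this extra clause the characterisation would also allow pseudoforests, which is exactly why the authors include it.
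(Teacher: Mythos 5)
Your proof is correct, and the reverse direction uses a genuinely different argument from the paper's. The paper argues structurally: starting from a vertex $r$ of in-degree $0$, it shows that every path starting at $r$ must be oriented consistently away from $r$, then takes a hypothetical cycle, picks the cycle vertex $u$ closest to $r$, and derives a contradiction from the fact that both arcs of the cycle at $u$ would have to point away from $u$. Your proof instead uses the directed handshake identity $|E(T)| = \sum_v \deg^-(v)$, which under the in-degree hypotheses gives $|E(T)| \le |V(T)| - 1$; combined with the connectedness lower bound $|E(T)| \ge |V(T)| - 1$, this forces equality and hence that $T$ is a tree. Your counting argument is shorter and avoids any case analysis on paths and cycles; what it does not give you (and the paper's proof does) is the explicit structural picture of how the orientation propagates away from the in-degree-$0$ vertex, which is the geometric intuition the authors reuse in the companion Proposition~\ref{pseudoforest} and when constructing escape-ways later. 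One small notational slip: you wrote $\Nm_D(v)$ where you mean the in-degree $|\Nm_D(v)|$, but the intent is clear. Your concluding observation, that dropping the ``at least one in-degree $0$'' clause would only certify a pseudoforest, is exactly right and is precisely what Proposition~\ref{pseudoforest} formalises.
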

\begin{proof}
    The if direction is straightforward because if $T$ is a tree then we can choose a root abitrarily and orient all edges away from the root. 

    For the only if direction, let $r$ be a vertex of in-degree $0$ and observe that due to the connectedness of $T$ we have a path $P_{ru}$ from $r$ to $u$ for any $u\in V(T)\backslash r$. 
    Because $r$ has in-degree $0$, the edge $rw$ incident to $r$ in $P_{ru}$ must be oriented away from $r$. 
    Because every other vertex has in-degree at most $1$ we see that all edges in $P_{ru}$ are oriented away from $r$. 
    Now suppose, for contradiction that there is a cycle $C$ in $T$, and let $u$ be a vertex in the cycle with shortest distance to $r$ (in the undirected sense). 
    By choosing appropriate paths from $r$ via $u$ to other vertices in $C$, we see that every path within $C$, with end-vertex $u$, must be oriented away from $u$. 
    But this is impossible in a cycle.
\end{proof}

\begin{proposition}\label{pseudoforest}
    A graph $F$ has an orientation such that every vertex has in-degree at most $1$ if and only if $F$ is a pseudo-forest. 
\end{proposition}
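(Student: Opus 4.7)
The plan is to prove both directions by a component-wise analysis, using the elementary identity $\sum_v |\Nm_D(v)| = |E(D)|$ and the previous proposition on trees as a building block.

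For the \emph{only if} direction, suppose $F$ admits an orientation $D$ with $|\Nm_D(v)|\leq 1$ for every $v$. Restricting $D$ to any connected component $C$ of $F$ gives an orientation of $C$ in which each vertex still has in-degree at most $1$, so summing in-degrees yields $|E(C)| \leq |V(C)|$. A connected graph on $|V(C)|$ vertices with at most $|V(C)|$ edges has cyclomatic number $|E(C)| - |V(C)| + 1 \leq 1$, hence at most one independent cycle, which means $C$ contains at most one cycle. Therefore $F$ is a pseudoforest.

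For the \emph{if} direction, suppose $F$ is a pseudoforest and orient each component $C$ independently. If $C$ is a tree, pick any root $r\in V(C)$ and orient every edge away from $r$; this gives $r$ in-degree $0$ and every other vertex in-degree exactly $1$ (this is exactly the construction from the previous proposition). If $C$ contains a (necessarily unique) cycle $C_0$, orient $C_0$ as a consistently directed cycle so that every vertex of $C_0$ has in-degree $1$ within $C_0$. The graph $C \setminus E(C_0)$ is a forest whose components each meet $C_0$ in exactly one vertex; for each such component, root it at the cycle vertex it contains and orient its edges away from that root. Then every vertex of $C_0$ has in-degree $1$ coming from the cycle and $0$ coming from the hanging trees, while every off-cycle vertex has in-degree exactly $1$ from its parent in the hanging tree. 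Taking the union over components gives the desired orientation of $F$.

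The only mild subtlety is verifying at the cycle-tree interface that the cycle root of each hanging tree contributes no extra in-degree — but this is immediate because edges in the hanging tree are oriented \emph{away} from the cycle vertex that roots it. I do not expect any real obstacle; the proposition is essentially a well-known characterisation of pseudoforests and the argument is a direct extension of the proof of the preceding proposition about trees.
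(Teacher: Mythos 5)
Your \emph{if} direction is essentially the same as the paper's: orient each tree component away from an arbitrary root (Proposition~\ref{prop:}), and for a component with a cycle, orient the cycle cyclically and orient the hanging trees away from the cycle. Your \emph{only if} direction, however, takes a genuinely different and cleaner route. The paper argues structurally: it first handles the case where a component has a vertex of in-degree~$0$ by falling back on Proposition~\ref{prop:}, and otherwise supposes two distinct cycles $S,S'$ exist, shows both must be cyclically oriented, deduces that all edges leaving either cycle are oriented outward, rules out any path intersection, and finally finds a vertex of in-degree~$2$ on a path joining $S$ to $S'$. You instead observe that $\sum_v |\Nm_D(v)| = |E(D)|$, so the in-degree constraint forces $|E(C)| \leq |V(C)|$ on each component $C$, whence the cyclomatic number $|E(C)| - |V(C)| + 1 \leq 1$, and a connected graph with cycle space of dimension at most one contains at most one cycle. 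This counting argument is shorter and avoids the case analysis entirely; the paper's argument, by contrast, mirrors the style of the preceding proposition and extracts the explicit contradiction of an in-degree-$2$ vertex, which is perhaps more illustrative but requires more care. Both proofs are correct; yours is the more economical of the two.
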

\begin{proof}
    For the if direction we orient each component individually. If a component $C$ is a tree then we are done by Proposition \ref{prop:} and if it contains one cycle then we cyclically orient the cycle and orient all other edges away from said cycle. 
    
    For the only if direction fix such an orientation and consider a component $C$ of $F$. 
    If $C$ has a vertex of in-degree $0$ then we are done by the previous proposition. 
    Otherwise assume, aiming for a contradiction that $C$ has two distinct cycles $S$ and $S'$.
    Clearly both cycles must be cyclically oriented. 
    Thus each vertex in $S$ has an in-edge that is in $E(S)$. 
    The same is true for $S'$ and it follows that all edges incident to $S$ and $S'$ that are not in $E(S)$ or $E(S')$ respectively, are oriented away from $S$ or $S'$ respectively. 
    In particular this means that  the cycles cannot intersect in any path (including the path that is just a vertex), because all edges of $S'$ with exactly one endvertex in a maximal path $P$ in the intersection would both be oriented away from $P$, implying $S'$ is not cyclically oriented.  
    Therefore by the connectedness of $C$ there exists a path with at least one edge from $S$ to $S'$ and so it must start oriented away from $S$ and finish oriented away from $S'$, implying that there is a vertex of in-degree $2$ on the path. 
\end{proof}

Of course we are interested not only in trees, but in induced trees. 
Thus we define the following object which combines the local-witness properties of Propositions \ref{prop:} and \ref{pseudoforest} with a subtle inducedness condition.
The next definition is key to our proof. 
Let $G$ be a graph and $D$ a (bi-)oriented subgraph of $G$. 
We define $\Vin(D)$ and $\Vout(D)$ to be the vertices with at least one in-neighbour or at least one out-neighbour respectively. 

\begin{definition}
    Given a graph $G$, an escape-way $D$ is an oriented subgraph of $G$ satisfying 
    that the in-degree (in $D$) of each vertex of $G$ is at most $1$ and such that if $x,y\in \Vin(D)$ and $xy\in E(G)$ then exactly one of $xy$ or $yx$ is in $E(D)$.
    The latter condition implies that $\Vin(D)$ induces a subgraph of $D$ in $G$.
\end{definition}
For example, an induced forest, oriented away from its roots is an escape-way. 
An induced cycle, oriented to be a directed cycle, is an escape-way.
Instructively, the following example is also an escape-way. 
Take an induced forest $T_1,\dots,T_t$ and orient each tree away from some root $r_1,\dots,r_t$ ($r_i\in T_i$).
Now add to $E(G)$  (but not to $D$), a clique containing $r_1,\dots,r_t$. 
See the discussion following Corolary \ref{cor:ISM} for more motivation. 
Note that subgraphs of escape-ways are escape-ways and if $D$ is an escape-way and $u \in N^+_D(v)$, we say that $D$ \emph{reserves $u$ for $v$}.

In our pre-emptive greedy algorithm we are interested in extending our escape-way chunk by chunk. 
We cannot use Hall's Theorem or the augmenting path properties that \cite{aggarwal1996efficient} used.
Thus we need to understand when two escape-ways are compatible.
While Proposition \ref{prop:} showed that certain components of escape-ways are trees, we also require inducedness.
Therefore it is tempting to say that if $u$ is reserved for $v$, then no neighbours of $u$ are allowed to  join the tree. 
This is almost correct, but its flaw is fatal; if we block all neighbours of $u$ then we can never extend from $u$!
The correct rule is that no neighbours of $u$ are allowed to  join the tree, \emph{unless we are extending from $u$.}
For a vertex $v\in V(G)$, we define the set of \emph{available neighbours} of $v$ as $A_{D}(v)\coloneqq N_G(v)\backslash ( \Vin (D-v)\cup N_D^{-}(v))$. Note that $A_{D}(v)\subset N_G(v)$.
A neighbour $u$ of $v$ in $G$ is therefore not available if either $u$ has an in-neighbour that is not $v$ \emph{or} $u$ is an in-neighbour of $v$.

\begin{definition}
Suppose $D$ is a (bi-)oriented subgraph of $G$. 
We define $K(D)$ to be the (bi-)oriented subgraph of $G$, obtained by starting from $D$  and adding, for all edges $uv \in E(D)$, all edges $vw$ for $w \in N_G(v)\setminus u$.
\end{definition}
Observe that if $D$ is an escape-way then $K(D)$ contains no bi-oriented edges. 
Further it is clear that if $D\subset D'$ then $K(D)\subset K(D')$.
We say an escape-way $D$ in $G$ \emph{agrees} with a (bi-)oriented subgraph $H$ of $G$ if the following hold: 
\begin{itemize}
    \item For all $x \in \Vin(H)\cap \Vin(D)$ we have $N^-_H(x) = N^-_D(x)$, and 
    \item For all $xy \in E(H)$, $yx \not\in E(D)$.
\end{itemize}

\begin{proposition}\label{prop:ISM_equiv}
 Given escape-ways $D$ and $D'$ in $G$ the following are equivalent: 
 \begin{enumerate}
      \item $D\cup D'$ is an escape-way in $G$,
     \item $D'$ agrees with $D$,
     \item For each $x\in V(G)$ we have $N^{+}_{D'}(x) \subset A_{K(D)}(x)$.
 \end{enumerate}
\end{proposition}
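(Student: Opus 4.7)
The plan is to verify the cycle of implications $(1)\Rightarrow(2)\Rightarrow(3)\Rightarrow(1)$; each is a short unpacking of the definitions of escape-way, agreement, $K(\cdot)$, and available neighbour.

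For $(1)\Rightarrow(2)$ I would assume $D\cup D'$ is an escape-way and derive both clauses of agreement. If $z\in \Vin(D)\cap \Vin(D')$, the in-degree bound on $D\cup D'$ forces $N^-_D(z)=N^-_{D'}(z)$. If some $xy\in E(D)$ has $yx\in E(D')$, then $x$ and $y$ both lie in $\Vin(D\cup D')$ and are $G$-adjacent while both orientations of $xy$ sit in $E(D\cup D')$, contradicting the ``exactly one'' clause of the escape-way property.

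For $(2)\Rightarrow(3)$ I would fix an edge $xu\in E(D')$ and verify $u\in A_{K(D)}(x)$. Membership in $N_G(x)$ is immediate. To show $u\notin \Vin(K(D)-x)$, I would take any $w\neq x$ with $wu\in E(K(D))$ and split into two cases: either $wu\in E(D)$ (so $u\in \Vin(D)\cap \Vin(D')$ and the first clause of agreement forces $w=x$, a contradiction) or $wu$ is an added edge coming from some $u''w\in E(D)$ with $u\in N_G(w)\setminus\{u''\}$ (in which case one uses the escape-way property of $D'$ at the pair $u,w$ together with the second clause of agreement to rule out $w\neq x$). The analogous case analysis, swapping the roles of $x$ and $u$, rules out $u\in N^-_{K(D)}(x)$.

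For $(3)\Rightarrow(1)$ I would verify the two clauses of the escape-way property of $D\cup D'$. The in-degree bound is immediate: two distinct in-neighbours $w\in D$ and $x\in D'$ of $u$ would give $wu\in E(K(D))$ with $w\neq x$, placing $u$ in $\Vin(K(D)-x)$ against (3). For the ``exactly one'' clause, one takes $a,b\in \Vin(D\cup D')$ with $ab\in E(G)$; the pure cases $\{a,b\}\subset \Vin(D)$ and $\{a,b\}\subset \Vin(D')$ follow from the escape-way property of $D$ or $D'$ directly, while in the mixed case the relevant orientation of $ab$ is forced by the construction of $K(D)$ (which adds out-edges from reserved vertices to all their other neighbours) combined with (3) applied at the appropriate vertex.

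The main obstacle will be the case analysis in $(2)\Rightarrow(3)$, where one has to simultaneously track whether $u$ and $w$ lie in $\Vin(D)$ or $\Vin(D')$, whether $wu$ is original or added in $K(D)$, and which clause of agreement applies. The point is that $K(\cdot)$ is engineered so that $A_{K(D)}(x)$ is exactly the set of neighbours of $x$ which can receive an out-edge from $x$ without destroying the escape-way property, and so the proposition reduces to translating between two equivalent local descriptions of the same global condition.
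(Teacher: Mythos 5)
Your $(1)\Rightarrow(2)$ and $(3)\Rightarrow(1)$ are sound and match the paper's content (you argue directly where the paper uses contrapositives). The gap is in $(2)\Rightarrow(3)$, in the case where the blocking edge $wu$ lies in $E(K(D))\setminus E(D)$. To your credit, you noticed this case must be treated, whereas the paper's own proof silently restricts to the sub-cases $yx\in E(D)$ and $zy\in E(D)$ and never addresses the added edges of $K(D)$. But the argument you gesture at does not close: the escape-way condition on $D'$ constrains only pairs inside $\Vin(D')$, and nothing forces $w\in\Vin(D')$; and the second clause of ``agrees'' controls reversals of edges of $E(D)$ only, saying nothing about $E(K(D))\setminus E(D)$.

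In fact this gap cannot be filled with $(2)$ read literally as ``$D'$ agrees with $D$'', since then $(2)$ is strictly weaker than $(1)$ and $(3)$. Take $G$ on $\{a,x,y,z\}$ with edges $az,\,xy,\,xz,\,yz$ and put $D=\{a\to z\}$, $D'=\{x\to y\}$. Both are escape-ways, and $D'$ agrees with $D$ vacuously ($\Vin(D)\cap\Vin(D')=\emptyset$, and $az$ is not reversed in $D'$); yet $D\cup D'$ is not an escape-way ($y,z\in\Vin(D\cup D')$ and $yz\in E(G)$ but neither orientation is present), and $(3)$ fails since $z\to y\in E(K(D))$ places $y\in\Vin(K(D)-x)$, so $y\notin A_{K(D)}(x)$. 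The condition that makes all three equivalent --- and the one the proposition is actually invoked with in the proof of Theorem~\ref{thm: main_simplev2}, namely ``$I'$ agrees with $D_i=K(B_i)$'' --- is ``$D'$ agrees with $K(D)$''. With that reading your added-edge case becomes immediate: $wu\in E(K(D))$ with $w\neq x$ gives $N^-_{K(D)}(u)\neq\{x\}=N^-_{D'}(u)$, violating the first clause of agreement, and $u\in N^-_{K(D)}(x)$ violates the second. Your $(3)\Rightarrow(1)$ carries over unchanged; $(1)\Rightarrow(2)$ then needs the extra check that $N^-_{K(D)}$ and $N^-_{D'}$ coincide on $\Vin(K(D))\cap\Vin(D')$, which again follows from the in-degree bound on $D\cup D'$ applied to the ends of the relevant $K(D)$-edges.
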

\begin{proof}
    First we show that $(1)$ implies $(2)$ by showing the contrapositive. 
    There are two reasons $(2)$ may not hold. If there exists $x \in \Vin(D') \cap \Vin(D)$ but $N^-_H(x) \not= N^-_D(x)$, then $x$ has in-degree at least $2$ in $D\cup D'$ meaning that $D\cup D'$ is not an escape-way. 
    If $xy \in E(D)$ and $yx \in E(D')$ then $D\cup D'$ has bi-oriented edges, meaning it is not an escape-way. 

    We also show that $(2)$ implies $(3)$ via the contrapositive. 
    Suppose there exists $y \in N^+_{D'}(x)$ and $y \not\in A_{K(D)}(x)$. There could be two reasons for $y \not\in A_{K(D)}(x)$.
    Suppose first $yx \in D$. Then, $xy\in E(D')$ and $yx \in E(D)$, hence $D'$ does not agrees with $D$. 
    If on the other hand $zy \in E(D)$ for some $z \not= x$ then  $y \in \Vin(D') \cap \Vin(D)$ but $N^-_H(x) \not= N^-_D(x)$.

    Finally we show that (3) implies (1), again via the contrapositive. 
    As $D$ and $D'$ are escape-ways there are only two reasons that $D\cup D'$ would not be an escape-way. 
    The first is that some vertex $y$ has in-degree at least $2$ in $D\cup D'$. 
    But then we must have two distinct edges $xy \in D$ and $zy \in D'$ which would imply that $y \in \Vin(K(D) - z)$ and so $y \not\in A_{K(D)}(z)$.
    The second is that there are $x \in \Vin(D)$ and $y \in \Vin (D')$ with $xy \in G$ but $xy,yx \not \in E(D)\cup E(D')$. 
    In this case $xy \in E(K(D))$ and so $y\not\in A_{K(D)}(z)$ for any $z \not= x$.
    
\end{proof}

Having established these technical properties we prepare to prove that criticality cascades cannot be too large. 
We will use the following bootstrap percolation process to capture all vertices that might be caught in a criticality cascade from $X$. 
\begin{definition}
    Given a set of vertices $X$ in a graph $G$, we define $C(X)\subset V(G)$, the \emph{$d$-critical set of $X$} as the terminal set of the following boostrap percolation process. 
    We let $X_0\coloneqq X$ and given $X_i$ we let $X_{i+1}$ be the union of $X_i$ with all vertices $v$ that have at least $d$ neighbours within distance at most $2$ from $X_{i}$ in $G\backslash v$ (where $x \in X_i$ has distance $0$).
\end{definition}
It follows immediately from the definition that if $X\subset X'$ then the $d$-critical set of $X'$ contains that of $X$. 
Further, the $(d-1)$-critical set of $X$ contains the $d$-critical set. 
The reason this definition is useful is because of its relationship to escape-ways.
Essentially it provides an upper bounds on how much the available neighbourhood can be reduced by fixing certain escape-ways.


\begin{proposition}\label{prop:critical_bounds_available}
    Let $X$ be a set of vertices in a graph $G$, and let $C(X)$ be the $d$-critical set of $X$. 
    Suppose $B$ is an escape-way with $\Vout(B) \subset C(X)$.
    Then $|A_{K(B)}(v)| \geq \deg_G(v) - d$ for all vertices $v \not\in C(X)$.
\end{proposition}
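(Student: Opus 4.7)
The plan is to show that every neighbour $w\in N_G(v)\setminus A_{K(B)}(v)$ lies within distance $\leq 2$ in $G-v$ from some vertex of $\Vout(B)\subset C(X)$. Once this is established, the conclusion is immediate from the definition of $C(X)$: if $v\notin C(X)$ then $v\notin X_i$ for every $i$, so $v$ has fewer than $d$ neighbours within distance $\leq 2$ from $X_i$ in $G-v$. Any finite collection $w_1,\dots,w_k$ of bad neighbours is witnessed by a finite collection of vertices in $\bigcup_i X_i$, and taking the maximum index realises all of these witnesses inside a single $X_{i^*}$; hence at most $d-1$ neighbours of $v$ can be unavailable, which is stronger than what is claimed.

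To carry out the main step, I would unpack the definition $A_D(v)=N_G(v)\setminus(\Vin(D-v)\cup N_D^-(v))$: a neighbour $w$ is unavailable iff either (a) $w$ has an in-neighbour in $K(B)$ distinct from $v$, or (b) $wv\in E(K(B))$. Using $E(K(B))=E(B)\cup\{v'w':uv'\in E(B),\ w'\in N_G(v')\setminus u\}$, each of (a) and (b) splits according to whether the responsible edge lies in $E(B)$ or is one of the new edges. In the four resulting cases one exhibits a witness in $\Vout(B)$ at graph-distance at most $2$ from $w$:
\begin{itemize}
\item If $uw\in E(B)$ for some $u\neq v$, then $u\in\Vout(B)$ is at distance $1$ from $w$.
\item If the unavailability of $w$ comes from a new $K(B)$-edge $u'w$ with $u'\in\Vin(B)$, $u'\neq v$, and $u'$ has in-neighbour $z$ in $B$, then $z\in\Vout(B)$ and the path $w-u'-z$ realises distance $\leq 2$.
\item If $wv\in E(B)$ then $w\in\Vout(B)$, contributing distance $0$.
\item The analogous sub-case of (b) coming from new $K(B)$-edges reduces to case (a) for $w$.
\end{itemize}

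The key subtlety, and the only real obstacle, is ensuring that the path produced actually lies in $G-v$. This is where the hypothesis $v\notin C(X)$ is used a \emph{second} time: since $\Vout(B)\subset C(X)$, we must have $v\notin\Vout(B)$, so $v$ has no out-edges in $B$. This rules out the intermediate vertex $z$ above being equal to $v$ (which would force $v\to u'$ in $B$), and similarly ensures that whenever we invoke an edge of $B$ to identify a witness in $\Vout(B)$, its tail is not $v$. With this bookkeeping done, the distance-$\leq 2$ claim holds in all cases, and the bound $|A_{K(B)}(v)|\geq \deg_G(v)-d$ follows.
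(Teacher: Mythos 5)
Your proof is correct and follows essentially the same route as the paper's. The paper states the ``crucial fact'' (that any neighbour of $v$ at distance at least $3$ from $\Vout(B)$ in $G\setminus v$ is available) without further argument, whereas you prove its contrapositive by unpacking the definitions of $A_{K(B)}$ and $K(B)$ into the four sub-cases and exhibiting a witness path of length at most $2$; you also spell out the finiteness argument showing that ``at most $d-1$ neighbours within distance $\leq 2$ of each $X_i$'' passes to the limit $C(X)$, and correctly note that $v\notin C(X)$ forces $v\notin\Vout(B)$, which is what keeps the witness paths inside $G\setminus v$ --- all of which the paper leaves implicit.
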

\begin{proof}
    Let $v\not\in C(X)$. 
    The crucial fact is that if $u \in N(v)$ has distance at least $3$ from $\Vout(B)$ in $G\backslash v$, then  $u\in A_{K(B)}(v)$.
    By the definition of $C(X)$ if $v \not\in C(X)$ then at most $d-1$ neighbours of $v$ have distance less than $3$ from $X$. 
    It follows that at most that many have distance less than $3$ from $\Vout(B)$ in $G\backslash v$.
\end{proof}

The following straightforward Lemma shows that large criticality cascades generate sets that are noticeably denser than trees or cycles.
This is useful because it implies they cannot be too large in graphs with no dense spots (see proof of main theorem).

\begin{lemma}\label{prop:density_of_critical}
    Let $X$ be a finite set of vertices in a graph $G$, and let $C(X)$ be the $d$-critical set of $X$.
    \begin{itemize}
        \item If $G[X]$ is connected and $|C(X)| \geq 2|X|$ then there exists a graph $H\subset G$ on at most $(2d+2)|X|$ vertices with average degree at least $2 + \frac{d-2}{2d+2}$.
        \item If $C(X)$ is unbounded then there exist a sequence of finite graphs $H\subset G$ with average degree approaching $3(1-\frac{1}{d+1})$.
    \end{itemize}

\end{lemma}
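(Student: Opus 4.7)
The plan is to exhibit a subgraph $H$ of $G$ whose edge-vertex excess $|E(H)|-|V(H)|$ is at least $n(d-1)-1$, while $|V(H)|\leq (2d+2)n$, from which the average-degree bound follows at once. Writing $n=|X|$, I would enumerate $X=\{v_1,\ldots,v_n\}$ and the vertices of $C(X)\setminus X$ as $v_{n+1},v_{n+2},\ldots$ in the order of the bootstrap process, breaking ties within a layer arbitrarily, and set $Y=\{v_{n+1},\ldots,v_{2n}\}$. For each $v_i$ with $i>n$, the definition of $C(X)$ provides $d$ distinct neighbours $u_1^{(i)},\ldots,u_d^{(i)}$ each at distance at most $2$ from $\{v_1,\ldots,v_{i-1}\}$ in $G\setminus v_i$; fix such witnesses, and for each fix a walk $P_j^{(i)}$ of length at most $3$ from $v_i$ through $u_j^{(i)}$ to some earlier $v_k$. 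Let $V(H)=X\cup Y\cup U\cup W$ with $U$ the set of witnesses and $W$ the set of internal length-$3$ walk vertices (both outside $X\cup Y$), and $E(H)=E(G[X])\cup\bigcup_{i,j}E(P_j^{(i)})$. Since each $v_i\in Y$ contributes at most $d$ new witnesses and $d$ new intermediates, $|V(H)|\leq 2n+2dn=(2d+2)n$.

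For the edge count I would build $H$ incrementally from $G[X]$ (which has excess at least $-1$ by connectedness), adding the vertices $v_{n+1},\ldots,v_{2n}$ in order together with their $d$ walks. Adding a vertex $v_i$ alone changes the excess by $-1$, while a walk of length $\ell$ adds $\ell$ edges and at most $\ell-1$ new internal vertices, contributing at least $+1$ to $|E|-|V|$. This lower bound is preserved even under sharing with the current subgraph: the first edge $v_iu_j^{(i)}$ is always new since $v_i$ is, and a short case analysis on whether $u_j^{(i)}$ and (in the length-$3$ case) the intermediate vertex already lie in the current $V(H)$ shows that any shared portion of the walk saves at least as many vertices as edges. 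Summing over all $n$ new vertices and their $d$ walks yields $|E(H)|-|V(H)|\geq -1+n(d-1)$, so the average degree is at least
\[
2+\frac{2(n(d-1)-1)}{(2d+2)n}=2+\frac{d-1}{d+1}-\frac{1}{(d+1)n}\geq 2+\frac{d-2}{2d+2},
\]
where the last inequality reduces to $dn\geq 2$ and so is trivial for $d\geq 2$.

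For the second bullet, since $C(X)$ is unbounded I would take $Y=\{v_{n+1},\ldots,v_{n+k}\}$ for arbitrarily large $k$; the same excess argument yields $|E(H)|-|V(H)|\geq k(d-1)-1$ on at most $n+k(1+2d)$ vertices, and letting $k\to\infty$ the average degree tends to $6d/(2d+1)$, which comfortably exceeds the claimed $3(1-1/(d+1))$. The main obstacle is verifying the $+1$-excess-per-walk step in the presence of sharing, which I would handle by a case check on the possible sharing patterns of a path of length at most $3$, exploiting the fact that any shared subgraph of two paths is a forest and so has at least as many vertices as edges.
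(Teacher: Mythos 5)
There is a gap in the excess accounting. Your key claim is that ``the first edge $v_iu_j^{(i)}$ is always new since $v_i$ is,'' but $v_i$ need not be new when you process step $i$: it may already belong to $V(H)$ because it appeared as a witness $u_{j'}^{(i')}$ or as an intermediate vertex on an earlier walk $P_{j'}^{(i')}$ with $i'<i$. In that case the edge $v_iu_j^{(i)}$ may also already lie in $H$, and the walk $P_j^{(i)}$ can contribute $0$ rather than $+1$ to $|E|-|V|$. A concrete failure: take $G=K_4$ on $\{a,b,c,d\}$, $X=\{a,b\}$ (so $G[X]$ is a single edge), and $d$-critical with $d=3$. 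The bootstrap yields $C(X)=V(G)$, so $|C(X)|=4=2|X|$ and the lemma is in scope. Following your construction with witnesses $a,b,d$ for $c$ (walks $ca$, $cb$, $cda$) and witnesses $a,b,c$ for $d$ (walks $da$, $db$, $dc$), one gets $H=K_4$. When processing $d$, it is already present and two of its three walks contribute nothing, so the final excess is $e(H)-v(H)=6-4=2$, strictly less than your claimed $n(d-1)-1=2\cdot 2-1=3$. (The lemma's conclusion is not violated in this instance only because $v(H)=4$ is far below the worst case $(2d+2)|X|=16$; the per-walk $+1$ claim itself is false, and it is not clear the global sum can be salvaged without an additional idea.)

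The paper avoids this pitfall. In the bounded case it takes a \emph{minimal} subgraph $H'$ witnessing the bootstrap, uses a surjection $Y\to Z$ to bound the number of internal path vertices, then iteratively deletes degree-one vertices and uses the connectedness of $G[X]$ to argue that every edge at a vertex of $C'\setminus X$ lies on a cycle, hence survives the pruning; this yields $\deg_H(v)\ge d$ for $v\in C'\setminus X$ and $\deg_H(v)\ge 2$ otherwise, and the average-degree bound follows by a direct degree sum rather than an excess count. In the unbounded case the paper's incremental argument explicitly \emph{skips} any $v_{i+1}$ that already lies in $H_i$ before adding its $d$ witness paths; that skip is exactly what guarantees each first edge is genuinely new. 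If you wish to keep the excess framework, you would need to build in a similar skip (only process a $v_i$ that is truly absent from $V(H)$) and then carefully re-derive both the vertex upper bound and the excess lower bound in terms of the number of non-skipped vertices, or alternatively fold in the paper's leaf-deletion and cycle argument.
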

\begin{proof}
    We analyse the bootstrap percolation that generates $C(X)$ from $X$, with the added assumption that the vertices are added one by one. 
    We break ties by a global ordering of the vertices.
    This does not affect the terminal set which we call $C\coloneqq C(X)$.
    We start from $X$ and let $v_1$ be the first vertex added and so on.
    By definition $v_{i+1}$ was added because it has at least $d$ neighbours who are at distance at most $2$ from $X \cup \{v_1,\dots,v_i\}$.
    
    For the first case let $C' = X \cup \{v_1,\dots,v_{|X|}\}$. 
    This exists by assumption.
    Let the graph $H'\subset G$ be a minimal spanning subgraph such that the boostrap percolation w.r.t. $H'$ starting at $ X$ reaches all of $C'$ and in the same order as above.
    We define four groups of vertices in $H$. The first two are natural, the initial set $X$ and the newly critical $C'\backslash X$. 
    Now for each $v\in C'\backslash X$, we know that there at least $d$ neighbours that caused $v$ to be in $C'$.
    Choose $d$ of these arbitrarily and call this set $Y_v$. 
    We then let $Y$ be the union of all these sets over $v\in C'\backslash X$. 
    Note that $Y$ may intersect both $X$ and $C'\backslash X$.
    Finally we define $Z \coloneqq V(H') \backslash ( C' \cup Y)$. 
    The role of the vertices in $Z$ (by process of elimination, and the minimality of $H'$) is to guarantee that the vertices in $Y$ are at distance less than $2$ from the earlier stages of the percolation.  
    We now construct a surjective map $f:Y \rightarrow Z$ in order to show $|Z| \leq |Y|$.
    We assume $Z$ is non-empty as the inequality is trivial otherwise.
    For $w \in Y$ let $j$ be the smallest index such that $w\in Y_{v_j}$.
    Let $f(w)$ be an arbitrary vertex in $Z$ on a path of length $2$ between $w$ and $X\cup \{v_1,\dots,v_{j-1}\}$ in $H'$.
    If no such vertex exists let $f(w)$ be arbitrary in $Z$. 
    By the minimality of $H'$ this map is a surjection. 
    Indeed otherwise there would exist a vertex $u\in Z$ whose deletion does not affect the bootstrap percolation rule. 
    As $Y$ has order at most $d(|C'|-|X|)$
    this implies that the order of $H'$ is at most $|C'| + (2d)(|C'|-|X|) = (2d+2)|X|$.

    We now iteratively delete leaves (degree one vertices) of $H'$ until we are left with a graph $H$ that has no leaves. 
     Observe that for each $v \in C'\backslash X$, every pair of edges incident to $v$ in $H'$ lie on a common cycle in $H'$. 
    This is because they both lie on paths from $v$ to $X$ and $G[X]$ is connected. 
    Therefore every edge incident to $v$ in $H'$ is also in $H$. 
    This allows us to lower bound the degree of every vertex in $H$ by $2$ and every vertex in $C'\backslash X$ (in $H$) by $d$. 
    By summing up the degrees in $H$ we obtain a lower bound on the average degree. In particular
    \begin{align}
      \frac{\sum \deg(v)}{v(H)} &\geq   \frac{d(|C'|-|X|) + 2(v(H)-(|C'|-|X|))}{v(H)} =  \frac{2v(H) + (d-2)|X|}{v(H)} 
       \\
       &= 2 + \frac{|X|(d-2)}{v(H)} \geq 2 + \frac{|X|(d-2)}{(2d+2)|X|} = 2 + \frac{d-2}{2d+2},
    \end{align}
where in the second step we used that $|C'|-|X| = |X|$. 
Thus $H$ is the desired graph.   

For the unbounded case, we can be more wasteful.
We construct a sequence of graphs $H_0,H_1,\dots$ with $H_0 = G[X]$ inductively as follows. 
Given $H_{i}$ if $v_{i+1}\in H_{i}$ then let $H_{i+1} = H_{i}$. 
Otherwise do the following. 
By the definition of $v_{i+1}$ it has at least $d$ neighbours $u_1,\dots,u_d$ that are at distance $0,1$ or $2$ from $H_{i}$.
We construct $H_{i+1}$ in $j$ steps $H_{i}=H_0\cup v_{i+1},\dots,H_i^d=H_{i+1}$, again inductively.
For $j\in [d]$ note that $\dist(u_j,H_i^{j-1})\leq 2$ by assumption. 
Add a path of length  $dist(u_j,H_i^{j-1})$ to $H_i^{j-1}$ to obtain $H_i^j$.
Thus we have added $dist(u_j,H_i^{j-1})$ vertices and $dist(u_j,H_i^{j-1})+1$ edges. 
Letting $M$ denote the sum over $j$ of the aforementioned distances we have that to get from $H_i$ to $H_{i+1}$ we have added $M + 1$ vertices and $M+d$ edges. 
Thus the ratio $(M+d)/(M+1)$ of edges added to vertices added is minimised when $M$ is maximised. 
$M$ is bounded above by $2d$ and so we obtain that the ratio of edges added to vertices added is at least 
\[
\frac{3d}{2d+1} > \frac32 \left(1-\frac{1}{d+1}\right).
\] 
Thus if $C(X)$ is unbounded then the average degree of the $H_i$'s approaches $3d/(d+1)$.
\end{proof}

Our final lemma before we prove the main theorem states that in graphs of small degeneracy we can find escape-ways that reserve a large portion of each vertex' neighbourhood for each vertex.
In fact it has a few more bells and whistles than that so we first state a corollary that captures its essence.

\begin{corollary}\label{cor:ISM}
    Let $G$ be a graph with maximum degree $\Delta$ such that every subgraph of $G$ has average degree at most $3$.
    There exists an escape-way $D$ in $G$ with $\dego_D(v)\geq \frac{|\deg(v)|}{10^7} - 5\log \Delta$ for all $v \in  V(G)$.
\end{corollary}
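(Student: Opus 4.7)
The plan is to construct $D$ from a simple randomized procedure and verify the escape-way properties via the Lov\'asz Local Lemma---presumably the content of the forthcoming Lemma \ref{lem: ISM}. First, since every subgraph of $G$ has average degree at most $3$, every subgraph $H$ satisfies $e(H)\le \tfrac{3}{2}|V(H)|\le 2|V(H)|$, so Hakimi's theorem provides an orientation $O$ of $G$ with maximum in-degree at most $2$. For each vertex $v$, write $I_O(v)\subseteq N_G(v)$ for its in-neighborhood in $O$, so $|I_O(v)|\le 2$.

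Fix a small constant $q$ (for concreteness $q = 10^{-6}$). For each vertex $v$ with $I_O(v)\neq\emptyset$, independently with probability $q$, I would declare $v$ \emph{covered} and choose $p(v)$ uniformly at random from $I_O(v)$. Form $D$ from the edges $\{p(v)\to v : v \text{ covered}\}$. The in-degree condition for an escape-way is automatic since each covered vertex contributes at most one in-edge. For the inducedness condition, consider $xy\in E(G)$ with $x, y$ both covered; WLOG $xy\in O$, so $y\notin I_O(x)$, forcing $p(x)\neq y$. The inducedness requirement therefore collapses to $p(y)=x$, which fails only when $|I_O(y)|=2$ and the uniform choice lands on the other in-neighbor---an event of probability at most $q^2/2$.

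For each vertex $u$, writing $X_u := |p^{-1}(u)|$, the out-degree $X_u$ is a sum of independent indicators over the out-neighbors of $u$ in $O$, each succeeding with probability at least $q/2$; hence $\mathbb{E} X_u \ge q(\deg(u)-2)/2$, which with our choice of $q$ exceeds $\deg(u)/10^7$ by a constant factor. A Chernoff bound then makes the bad event $B_u := \{X_u < \deg(u)/10^7 - 5\log\Delta\}$ have probability at most $\Delta^{-C}$ for any prescribed constant $C$, while small-degree vertices are handled trivially by the $-5\log\Delta$ slack. Every bad event is determined by the random choices at vertices within a bounded-radius neighborhood in $O$, so each is mutually independent of all but polynomially-many (in $\Delta$) others, and the symmetric Lov\'asz Local Lemma produces an outcome avoiding them all.

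The main obstacle will be the bookkeeping of constants: choosing $q$ small enough that $q^2/2$ absorbs the polynomial-in-$\Delta$ dependency degree of the inducedness events, while large enough that the expected out-degree beats $\deg(u)/10^7$ with enough slack for a $5\log\Delta$ Chernoff deviation. The particular choice of constants in Corollary \ref{cor:ISM} is precisely what is left over after balancing these two constraints; apart from that the argument is routine.
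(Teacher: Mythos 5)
Your construction has the right skeleton---a low-in-degree orientation, random subsampling, and the Lov\'asz Local Lemma---but the way you handle the inducedness condition contains a gap that cannot be repaired by ``bookkeeping of constants.'' You treat each potential inducedness violation (an edge $xy\in O$ with both $x,y$ covered and $p(y)\neq x$) as a bad event of probability about $q^2/2$ and rely on the LLL to avoid all of them. Such an event is mutually dependent with every event touching $x$ or $y$, so its dependency degree is $\Theta(\Delta)$ even before you add in the out-degree events. The symmetric LLL then forces $e\cdot(q^2/2)\cdot\Theta(\Delta)\le 1$, i.e.\ $q = O(\Delta^{-1/2})$. But the expected out-degree is only about $q\cdot\deg(u)/2 = O(\deg(u)/\sqrt\Delta)$, and for a vertex of degree $\Delta$ this is $O(\sqrt\Delta)$, which is eventually far below the required $\Delta/10^7 - 5\log\Delta$. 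The two constraints you propose to balance---$q$ small enough for the LLL, $q$ large enough for the out-degree---are simply incompatible once $\Delta$ is large, so fixing $q=10^{-6}$ does not work and neither does any other choice.

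The paper's proof of Lemma \ref{lem: ISM} sidesteps this obstacle entirely: after sampling, it \emph{deterministically} deletes in-edges at any vertex where a clash (or a near-clash with a potential in-neighbour) occurs, so the resulting digraph $\bD$ is an escape-way \emph{with probability one}, and no LLL is needed for the inducedness. Because the degeneracy is the constant $C=3$, the probability that a sampled out-edge $vu$ survives this cleanup is bounded below by a constant $p(1-p)^{C(C-1)}\ge 1/(eC^2)$, independent of $\Delta$. The LLL is then applied \emph{only} to the out-degree concentration events (whose probabilities decay like $\Delta^{-5}$), and the $\Delta^4$ dependency degree is no obstacle. So the essential idea you are missing is to repair bad samples rather than forbid them. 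A secondary point: once the repair step is in place, the out-degree indicators are no longer independent, which is why the paper invokes McDiarmid together with a Lipschitz bound (Claim \ref{claim:independent_neighbours}) rather than a plain Chernoff bound as you propose.
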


This result sheds light on the delicate definition of an escape-way. 
We have already seen in Propositions \ref{prop:} and \ref{pseudoforest} that the definition is sufficient to encode trees and pseudoforests. 
We now show that very slight alterations to the definition of an escape-way make Corollary \ref{cor:ISM} impossible. 

First suppose we required $\Vin(D)\cup\Vout(D)$ (instead of just $\Vin(D)$) to induce a subgraph of $D$ in $G$. 
We construct $G$ as follows.
 Consider a cycle $C$ of length $\ell$ with a single chord and add $d$ pendant edges to each vertex of the cycle.
 Then $G$ has maximum degree $d+3$ and every subgraph has average degree less than $3$.
If $d$ is large enough then every vertex of the cycle must be in $\Vout(D)$ for any $D$ as in Corollary \ref{cor:ISM}.
But if $\Vin(D)\cup\Vout(D)$ induced a subgraph of $D$ in $G$ then all of the edges in $C$ as well as the chord are in $D$.
But orienting these edges will create a vertex of in-degree $2$, which contradicts that $D$ is an escape-way.

Secondly suppose that instead of requiring that $\Vin(D)$ induces a subgraph of $D$ in $G$ we required that our escape-ways are unions of disjoint stars (oriented away from the root). 
Let $G$ be the $d$-ary tree of depth $3$. 
Thus $G$ has max degree $d+1$ and all subgraphs have average degree less than $2$.  
If $d$ is large then root $r$ must have at least one out-neighbour, say $v$. 
But then if $v$ also has an out-neighbour $\Vin(D)$ the stars (one rooted $r$ and one at $v$) are not disjoint.
Thus if $d$ is large, there is no $D$ satisfying Corollary \ref{cor:ISM}.

The  maximum degree condition in Corollary \ref{cor:ISM} is due to our use of the Lovász Local Lemma.
\begin{theorem}[Lovász Local Lemma]
    Suppose there are a set of events such that each event is mutually independent of all but $d$ other events. 
    If each event occurs with probability less than $1/ed$ then the probability that none of the events occur is positive.
\end{theorem}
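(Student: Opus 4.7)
The plan is to use the classical chain-rule plus induction argument. Enumerate the events as $A_1, \ldots, A_n$ with $\Prob(A_i) < 1/(ed)$, and let $\Gamma(i) \subseteq [n]\setminus\{i\}$, with $|\Gamma(i)| \leq d$, denote the set of events on which $A_i$ may depend. Since the chain rule gives
\[
\Prob\left(\bigcap_{i=1}^n \overline{A_i}\right) = \prod_{i=1}^n \Prob\left(\overline{A_i}\,\Big|\, \bigcap_{j<i}\overline{A_j}\right),
\]
it suffices to show that each conditional factor on the right is strictly positive. I would in fact establish the stronger uniform bound that there exists a constant $y\in(0,1)$ (of order $1/d$) such that
\[
\Prob\left(A_i\,\Big|\,\bigcap_{j\in S}\overline{A_j}\right)\leq y
\]
for every index $i$ and every finite $S\subseteq [n]\setminus\{i\}$, with the empty-conditioning base case being exactly the hypothesis $\Prob(A_i)<1/(ed)\leq y$.

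The inductive step, on $|S|$, is the technical core. Partition $S = S_1 \sqcup S_2$ with $S_1 = S\cap\Gamma(i)$ (so $|S_1|\leq d$) and write
\[
\Prob\left(A_i\,\Big|\,\bigcap_{j\in S}\overline{A_j}\right) = \frac{\Prob\left(A_i\cap\bigcap_{j\in S_1}\overline{A_j}\,\Big|\,\bigcap_{j\in S_2}\overline{A_j}\right)}{\Prob\left(\bigcap_{j\in S_1}\overline{A_j}\,\Big|\,\bigcap_{j\in S_2}\overline{A_j}\right)}.
\]
The numerator is at most $\Prob(A_i \mid \bigcap_{j\in S_2}\overline{A_j}) = \Prob(A_i) < 1/(ed)$, because $A_i$ is mutually independent of the events indexed by $S_2$. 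For the denominator, one telescopes: enumerating $S_1 = \{j_1,\ldots,j_t\}$ with $t\leq d$, the denominator equals
\[
\prod_{\ell=1}^{t}\Prob\left(\overline{A_{j_\ell}}\,\Big|\,\bigcap_{j\in S_2}\overline{A_j}\cap\bigcap_{k<\ell}\overline{A_{j_k}}\right),
\]
and each conditioning set has cardinality strictly less than $|S|$, so the inductive hypothesis bounds each factor below by $1-y$, yielding denominator $\geq (1-y)^d$. Combining,
\[
\Prob\left(A_i\,\Big|\,\bigcap_{j\in S}\overline{A_j}\right) \leq \frac{1/(ed)}{(1-y)^d},
\]
and the induction closes provided $y$ satisfies $y(1-y)^d \geq 1/(ed)$.

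The one genuinely nontrivial step is the numerical calibration of $y$. The natural choice is $y$ of the form $1/d$ (or $1/(d+1)$), where the elementary estimate $(1-1/d)^{d-1} \geq 1/e$ supplies exactly the factor of $1/e$ appearing in the hypothesis and makes the inequality $y(1-y)^d \geq 1/(ed)$ hold with a small amount of slack absorbed by the strictness $p<1/(ed)$. Once the induction is in place, the chain-rule product is bounded below by $(1-y)^n > 0$, and the theorem follows.
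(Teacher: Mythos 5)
The paper does not prove the Local Lemma; it is cited as a classical black-box. Your proof follows the standard chain-rule/induction approach, which is the right strategy, but the numerical calibration does not close at the stated threshold. After partitioning $S=S_1\sqcup S_2$ with $S_1=S\cap\Gamma(i)$ and $|\Gamma(i)|\leq d$ (note $i\notin\Gamma(i)$ since the hypothesis counts \emph{other} events), the denominator bound is $(1-y)^{|S_1|}\geq(1-y)^d$, so the recursion needs $y(1-y)^d\geq 1/(ed)$. But $\max_{y\in[0,1]}y(1-y)^d=\frac{d^d}{(d+1)^{d+1}}$, attained at $y=1/(d+1)$, and this maximum is \emph{strictly less} than $1/(ed)$ for every $d\geq 1$ (equivalently $(1+1/d)^{d+1}>e$). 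Your invoked estimate $(1-1/d)^{d-1}\geq 1/e$ has exponent $d-1$, whereas the argument forces exponent $d$, and $(1-1/d)^d<1/e$; the strictness of $p<1/(ed)$ does not rescue this, because no choice of $y$ satisfies the required inequality for $p$ in the range $\bigl(\frac{d^d}{(d+1)^{d+1}},\,\frac{1}{ed}\bigr)$.

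Consequently, the induction as written proves the symmetric Local Lemma under the hypothesis $p\leq\frac{d^d}{(d+1)^{d+1}}$, which is the usual $ep(d+1)\leq 1$ form (and what the asymmetric Lemma specializes to). The stated threshold $p<1/(ed)$ is still correct --- it sits below Shearer's sharp bound $\frac{(d-1)^{d-1}}{d^d}=\frac{1}{d}(1-1/d)^{d-1}\geq\frac{1}{ed}$ --- but reaching it genuinely requires the finer analysis rather than the uniform-$y$ induction. For the purposes of this paper the distinction is immaterial, as the Lemma is applied in the proof of Lemma~\ref{lem: ISM} with enormous slack ($q=1/\Delta^5$, $d=\Delta^4$), so either form of the hypothesis suffices; but if you intend your argument as a proof of the statement as written, either weaken the hypothesis to $ep(d+1)\leq 1$ or supply Shearer's refinement.
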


We will also require McDiarmid's Inequality for Lipschitz functions of independent random variables. 
\begin{lemma}
    Suppose $X_1,\dots,X_m$ are independent binary random variables and $f:\{0,1\}^m\rightarrow \RR$ is an $s$-Lipschitz function. 
    Then for any $t>0$
    \[
       \Prob\left [ \mathbb{E}[X] - X > t \right ] < \exp\left(-\frac{2t^2}{m s^2}\right).
    \]
\end{lemma}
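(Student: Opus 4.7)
The plan is to prove this standard concentration bound (McDiarmid / bounded differences) via the Doob martingale construction and Azuma--Hoeffding. Writing $X = (X_1, \ldots, X_m)$ and interpreting the statement as $\Prob[\mathbb{E}[f(X)] - f(X) > t] < \exp(-2t^2/(ms^2))$, I would define the Doob martingale $Y_i \coloneqq \mathbb{E}[f(X) \mid X_1, \ldots, X_i]$ for $i = 0, 1, \ldots, m$, so that $Y_0 = \mathbb{E}[f(X)]$ and $Y_m = f(X)$. Then $Y_0 - Y_m = -\sum_{i=1}^m D_i$ where $D_i \coloneqq Y_i - Y_{i-1}$ are the martingale differences, and the target probability equals $\Prob[\sum_{i=1}^m (-D_i) > t]$.

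The key point is to bound the range (not merely the absolute value) of each $D_i$ conditional on $X_1, \ldots, X_{i-1}$. Fix $X_1, \ldots, X_{i-1}$ and define $g(x) \coloneqq \mathbb{E}[f(X) \mid X_1, \ldots, X_{i-1}, X_i = x]$ for $x \in \{0,1\}$. Since the $X_j$ are independent, $g(x)$ is obtained by averaging $f$ over the coordinates $X_{i+1}, \ldots, X_m$ with $X_i$ fixed to $x$; the $s$-Lipschitz property of $f$ then immediately gives $|g(0) - g(1)| \leq s$. Since $D_i = g(X_i) - \mathbb{E}[g(X_i) \mid X_1, \ldots, X_{i-1}]$, conditional on the past $D_i$ takes values in an interval $[a_i, b_i]$ (depending on the past) of length at most $s$.

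Now I would apply the sharp form of Hoeffding's lemma for martingale differences with bounded range: conditionally on $X_1, \ldots, X_{i-1}$, for any $\lambda > 0$,
\[
\mathbb{E}\!\left[e^{\lambda(-D_i)} \,\middle|\, X_1, \ldots, X_{i-1}\right] \leq \exp\!\left(\frac{\lambda^2 s^2}{8}\right),
\]
which is the standard Hoeffding bound for a zero-mean random variable supported in an interval of length $s$. Multiplying through $i = 1, \ldots, m$ by the tower property yields $\mathbb{E}[\exp(\lambda \sum_i (-D_i))] \leq \exp(\lambda^2 m s^2 / 8)$, and then Markov's inequality gives
\[
\Prob\!\left[\sum_{i=1}^m (-D_i) > t\right] \leq \exp\!\left(\frac{\lambda^2 m s^2}{8} - \lambda t\right).
\]
Optimising in $\lambda$ (taking $\lambda = 4t/(m s^2)$) delivers the claimed bound $\exp(-2t^2/(m s^2))$.

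The only genuinely delicate point is the sharp version of Hoeffding's lemma giving the constant $1/8$ in the moment generating function bound, rather than the weaker $1/2$ one gets from naively using $|D_i| \leq s$; without this one loses the factor of $4$ in the exponent. This is a classical one-variable convexity calculation (bounding $e^{\lambda x}$ on an interval by a linear function and optimising), so while it is the quantitative heart of the argument, it is standard and not a real obstacle.
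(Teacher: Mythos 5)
Your proof is correct and is the standard bounded-differences argument (Doob martingale plus the sharp Hoeffding lemma with the $\tfrac18$ constant). The paper simply quotes McDiarmid's inequality as a known result and gives no proof, so there is nothing to compare against; your argument fills that in correctly, and you rightly flag that one must bound the conditional \emph{range} of each $D_i$ by $s$ (rather than $|D_i|\le s$) to recover the factor $2$ in the exponent.
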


We now state the lemma in full detail. 
Given a subgraph $G'\subset G$ we denote by $A_D^{G'}(v) \coloneqq A_D(v) \cap N_{G'}(v)$, the available neighbours of $v$ in $G$ with respect to $D$, that are also neighbours of $v$ in $G'$.

\begin{lemma}\label{lem: ISM}
    Suppose $F$ is an undirected graph with  maximum degree $\Delta$ such that all subgraphs have average degree at most $3$.
    Further suppose $F$ has a spanning subgraph $G$ and an oriented subgraph $H$.
    Then $F$ has an escape-way $D$ that agrees with $H$, such that $\dego_D(v) \geq \frac{|A_H^G(v)|}{10^7} - 5\log \Delta $ for all $v \in  V$.
\end{lemma}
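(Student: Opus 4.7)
The plan is to construct $D$ via a random procedure followed by a deterministic repair, and invoke the Lov\'asz Local Lemma. The construction exploits the sparsity assumption on $F$: since every subgraph of $F$ has average degree at most $3$, by the Nash--Williams arboricity theorem $F$ decomposes into two pseudoforests, which by Proposition~\ref{pseudoforest} can be jointly oriented to give an orientation $\vec F$ of $F$ with maximum in-degree at most $2$. A standard path-reversal argument lets us further insist that $\vec F$ contains $H$ in the following sense: for each $u\in\Vin(H)$ with $|N_H^-(u)|=1$, the unique $H$-in-edge of $u$ appears in $\vec F$; vertices with $|N_H^-(u)|\ge 2$ are set aside, since the agreement condition forces them out of $\Vin(D)$ anyway.

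Starting from $\vec F$, I randomly reduce the in-degree from at most $2$ down to at most $1$: for each vertex $u\notin \Vin(H)$ with in-degree exactly $2$ in $\vec F$, flip a fair coin independently and delete one of its two in-edges. This gives an orientation $\vec F_1$ satisfying the in-degree constraint of an escape-way. The inducedness condition may now fail on certain \emph{violating edges} $uw\in E(F)$ whose endpoints both lie in $\Vin(\vec F_1)$ yet with no arc between them in $\vec F_1$. A short case analysis shows that every violating edge was the edge deleted at its head during the random step, so the total number of them is bounded by $|\{u:\deg^-_{\vec F}(u)=2\}|\le |V|$. I repair these greedily: for each violating edge, demote one endpoint by deleting its in-edge in $\vec F_1$. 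Because a demotion can only eliminate existing violations and never create new ones, this single-pass repair produces an escape-way $D$ of $F$ agreeing with $H$.

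For the LLL, define the bad event $A_v$ that $\dego_D(v)<|A_H^G(v)|/10^7-5\log\Delta$. We have $\dego_{\vec F}(v)\ge\deg_F(v)-2\ge |A_H^G(v)|-2$, and each out-edge $v\to u$ survives the random pruning step independently with probability at least $1/2$, so by a Chernoff bound the surviving out-degree in $\vec F_1$ is at least $|A_H^G(v)|/8$ except with probability $\exp(-\Omega(|A_H^G(v)|))$. The inducedness repair destroys one further out-edge per demoted out-neighbour of $v$, but by using a local rule that selects a near-optimal vertex cover of violating edges (exploiting the fact that the graph of violating edges inherits the average-degree-$3$ bound from $F$), the expected number of demotions among $v$'s out-neighbours is only a constant fraction of $\dego_{\vec F}(v)$. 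A McDiarmid application combined with the Chernoff bound above yields
\[
\Pr[A_v]\le \exp\bigl(-\Omega(|A_H^G(v)|)\bigr).
\]

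The event $A_v$ depends on random coins only at vertices within graph distance $O(1)$ of $v$, hence on at most $\Delta^{O(1)}$ coins, and the symmetric Lov\'asz Local Lemma condition $e\cdot \Delta^{O(1)}\cdot \Pr[A_v]<1$ holds whenever $|A_H^G(v)|\ge C\log\Delta$ for a suitable absolute constant $C$; for smaller $|A_H^G(v)|$, the target quantity $|A_H^G(v)|/10^7-5\log\Delta$ is non-positive, so the conclusion is trivial. With positive probability no $A_v$ occurs, giving the desired $D$. The main technical obstacle I anticipate is maintaining Lipschitz control of the inducedness repair: a badly chosen demotion rule could cascade around $v$ and wipe out its out-degree. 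This is precisely where the sparsity hypothesis is used decisively, since the graph of violating edges inherits the average-degree-$3$ bound, allowing a local fractional vertex cover of weight close to $1/2$ to suffice and keeping the per-vertex loss bounded.
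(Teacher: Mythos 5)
Your high-level plan (random orientation, random pruning, deterministic repair, Lov\'asz Local Lemma) is the same as the paper's, but the crucial repair step is where the two diverge, and yours has a genuine gap. The paper avoids any global repair: it keeps each candidate edge only with probability $p=1/C^2$ (where $C$ is the degeneracy, here $3$), so collisions are so rare that a \emph{purely local} clash-resolution rule suffices --- delete all in-edges of $v_i$ if $v_i$ has in-degree at least $2$, or if some other potential in-neighbour of $v_i$ already has an in-edge. Whether $v\to u$ survives depends only on the coins at edges within distance $2$ of $u$ in the auxiliary digraph, so the Lipschitz constant ($\le 8$, via the co-degree bound implied by average degree $\le 3$) and the dependency radius for the LLL are immediate. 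The price paid is the factor $1/(eC^2)$ in the expected out-degree, whence $10^7$.

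Your pruning keeps a constant fraction of edges, so collisions are abundant and the repair of ``violating'' edges becomes a vertex-cover problem on a graph that can have $\Theta(|V|)$ edges. You assert that a ``local rule that selects a near-optimal vertex cover'' keeps the per-vertex loss to a constant fraction, but no such rule is exhibited, and I do not see how one would work. The issues are: (a) finding an integral vertex cover of a sparse graph such that simultaneously for \emph{every} vertex $v$ only a constant fraction of $N^+_{\vec F_1}(v)$ is demoted is a strong global constraint, not a consequence of the average-degree-$3$ bound or of the existence of a light fractional cover; (b) even if such a cover exists, for McDiarmid you need $\dego_D(v)$ to be $O(1)$-Lipschitz in each coin, and for the LLL you need the bad event at $v$ to depend only on coins within $O(1)$ distance of $v$ --- but an ``optimal'' or ``near-optimal'' cover is inherently unstable under single coin changes and inherently non-local, so neither property is clear. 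Natural local tie-breaking rules (e.g.\ always demote the head of the deleted arc, or always demote the smaller-labelled endpoint) are easy to analyse but can deterministically demote a constant fraction or even all of a vertex's out-neighbourhood, as one can check on small examples built from $K_4$'s with pendant edges. This step needs an actual construction and proof, not an appeal to fractional vertex covers.

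Separately, a smaller but real bug: your out-edges are drawn from all of $N_F(v)$, whereas agreement with $H$ forces every out-edge $v\to u$ of $D$ to land in $A_H^G(v)$. Indeed if $u\in\Vin(H-v)$ and $v\to u\in E(D)$ then $N^-_D(u)\ne N^-_H(u)$, violating the first agreement clause; and if $u\in N^-_H(v)$ then $v\to u$ reverses an $H$-edge, violating the second. You therefore need an additional pruning to $A_H^G(v)$ before the random step (which, fortunately, costs only an additive $O(1)$ since $\vec F$ has in-degree at most $2$), and likewise the coin flip at vertices in $\Vin(H)$ with $|N^-_H|=1$ must be replaced by a deterministic choice that keeps the $H$-in-edge, as you only partially note. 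Your ``path-reversal'' argument for embedding $H$ into the pseudoforest orientation is also unsubstantiated, but this is the least serious of the issues. The pseudoarboricity-$2$ orientation is a nice observation and, if the repair problem could be solved, would improve the constant in the lemma dramatically; but as written the argument does not close.
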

The reason for two graphs $F$ and $G$ is to prepare for our ramsey theoretic applications ($G$ will be a large monochromatic subgraph).
We cannot just forget about $F$ because in induced ramsey theory one wants a monochromatic subgraph that is induced in the original graph.
For the oriented subgraph $H$, the motivation is to encode the contraint that the escape-way $D$ that we seek should combine with an escape-way $D'$ that we already have, to give a bigger escape-way (see proof of main theorem). 

\begin{proof}
    The idea is to use the average degree condition  to bound the degeneracy by $C$ and find an orientation of $F$ in which all but $C$  of the available neighbours of each vertex are out-neighbours. 
    We then sample a random subgraph of this orientation and \underline{carefully} resolve clashes (keeping the earlier edge in the degeneracy ordering) to turn it into an escape-way called $\bD$.
    In expectation this will leave everyone with a large out-neighbourhood and we complete the proof by showing concentration (once again using the average degree condition) and applying the LLL.

    Fix an ordering of $V(G)$ which witnesses that  the degeneracy of $F$ is at most $C$. 
    Namely, writing $V(J)=V(G)=\{v_1,\dots,v_n\}$ we have  $|N(v_j)\cap \{v_1,\dots,v_{j-1}\}| \leq C$ for all $2 \leq j \leq n$. 
    Orient all the edges in $E(J)\backslash E(H)$  so that for all such edges $v_iv_j$ we have $i<j$, and copy the orientation of all other edges from $H$.
    Call the resulting digraph $G''$.
    Let $G'$ be the subdigraph of $G''$ consisting of edges of the form $vu$ for $u \in A_H^G(v)$ and observe that $|\No_{G'}(v)| \geq |A_H^G(v)|  - C$ for $v\in V(G)$.
 
    Now we choose a subdigraph $\bJ$ of $G'$ by including each edge of $G'$ independently with probability $p \coloneqq 1/C^2$.
    We then deterministically resolve clashes,  choosing a further subdigraph $\bD$ of $\bJ$ as follows. 
    For each $i \in [n]$, if: \begin{enumerate}
        \item $\degi_\bJ(v_i)\geq 2$ or
        \item  $\degi_\bJ(v_i)= 1$ and there exists $u\in \Ni_{G''}(v_i)\backslash \Ni_\bJ(v_i)$ with $\degi_\bJ(u) \geq 1$,
    \end{enumerate}  then we delete all edges of $\bJ$ that are oriented towards $v_i$.
    We observe that $\bD$ is a deterministic function of $\bJ$, and that by construction $\bD$ is an escape-way. 
    Indeed, $(1)$ ensures that the indegree of any vertex is at most $1$ and $(2)$ ensures that the vertices of indegree at least $1$ induce (in $F$) a subgraph of $\bD$.
    We denote  by $\bJ_{ww'}$ the event that $\{ww' \in E(\bJ)\}$, for each edge $ww'\in E(G')$.  
     Thus $\bJ$ can  be viewed as $\prod_{e\in E(G')}\bJ_e$.
     Similarly, we denote, for each edge $ww'\in E(G')$,  by $\bD_{ww'}$, the event that $\{ww' \in E(\bD)\}$.

    
\begin{claim}\label{claim:expected_deg}
    For each $vu \in E(G')$ we have $\Prob[\bD_{vu}] \geq 1/(eC^2)$.
\end{claim}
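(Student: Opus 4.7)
The plan is to compute $\Prob[\bD_{vu}]$ explicitly, exploiting the fact that $\bJ$ includes each edge of $G'$ independently with probability $p = 1/C^2$. The event $\bD_{vu}$ holds iff $vu \in \bJ$ and the deletion rule at $u$ does not fire. Conditional on $vu \in \bJ$, the deletion rule at $u$ fails iff (a) no other $G'$-in-edge of $u$ lies in $\bJ$ (which rules out clause (1) and forces $\degi_\bJ(u) = 1$), and (b) for every $w \in \Ni_{G''}(u) \setminus \{v\}$, no $G'$-in-edge of $w$ lies in $\bJ$ (ruling out clause (2)). All the $G'$-edges featuring in (a) and (b) have pairwise distinct heads, so the corresponding ``not in $\bJ$'' events are mutually independent, giving
\[
\Prob[\bD_{vu}] \;=\; p \cdot (1-p)^m, \qquad m := \bigl(|\Ni_{G'}(u)| - 1\bigr) + \sum_{w \in \Ni_{G''}(u)\setminus\{v\}} |\Ni_{G'}(w)|.
\]

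To finish, I would bound $m \leq C^2 - 1$ so that $(1-p)^m \geq 1/e$ via the classical inequality $(1-1/n)^{n-1} \geq 1/e$. The crucial observation is that if $xw \in E(G')$ then $w \in A_H(x)$, and in particular $w \notin \Vin(H - x)$, which forces $|\Ni_H(w)| \leq 1$. Consequently $\Ni_{G''}(w)$ is the disjoint union of at most $C$ prior $F \setminus H$-neighbours (from the degeneracy ordering) and at most one $H$-in-neighbour, so $|\Ni_{G'}(w)| \leq C$; the same reasoning applies to $u$, and since $v \in \Ni_{G''}(u)$, we get $|\Ni_{G''}(u) \setminus \{v\}| \leq C$. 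A case split on whether $vu \in E(H)$ is then used to eliminate the remaining slack: if $vu \in E(H)$ then $v$ is the unique $H$-in-neighbour of $u$ and the analysis of $\Ni_{G'}(u)$ shows $|\Ni_{G'}(u)| \leq 1$, annihilating the first summand; if instead $vu \in E(F) \setminus E(H)$ then $v$ itself occupies one of the (at most $C$) prior $F \setminus H$-in-neighbour slots at $u$, trimming the sum by one.

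The principal obstacle is precisely this sharp accounting for $m$: a naive combination of per-vertex degree bounds gives something like $(C-1) + C(C+1) = C^2 + 2C - 1$, which is too loose, and one must leverage the $A_H$-constraint together with the case analysis on whether $vu$ belongs to $H$ to arrive at $m \leq C^2 - 1$. Once this is in hand the claim follows from a single application of $(1-1/C^2)^{C^2-1} \geq 1/e$, and the resulting lower bound on $\Prob[\bD_{vu}]$ will serve as the starting point for bounding $\E[\dego_\bD(v)]$ and applying McDiarmid's inequality and the Lov\'asz Local Lemma in the remainder of the proof of Lemma~\ref{lem: ISM}.
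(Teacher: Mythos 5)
Your proposal follows exactly the same route as the paper: write $\Prob[\bD_{vu}] = p\cdot\Prob[\text{no deletion at }u \mid vu\in\bJ]$, observe that the conditional event is a conjunction of independent events of the form $\{e\notin\bJ\}$ for a bounded collection of $G'$-edges, and bound that collection's size by roughly $C^2$ so that $(1-p)^m\geq 1/e$. In one respect you are more careful than the paper: you correctly take $w$ to range over $\Ni_{G''}(u)\setminus\{v\}$ in the clause coming from rule (2) of the deletion procedure, whereas the paper's proof writes $\Ni_{G'}(u)\setminus v$; since rule (2) is defined via $\Ni_{G''}$, and $\Ni_{G''}(u)$ can strictly contain $\Ni_{G'}(u)$, the paper's stated event is only necessary, not ``precisely'' the event $\bD_{vu}\mid\bJ_{vu}$, and the claimed count $C(C-1)$ does not obviously account for all the edges that must be avoided. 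Your inequality $(1-1/n)^{n-1}\geq 1/e$ with $n=C^2$ is the cleaner way to finish once $m\leq C^2-1$ is in hand.

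However, there is a gap in your accounting in the branch $vu\in E(H)$. There you correctly get $|\Ni_{G'}(u)|-1=0$, but the second summand is bounded only by $|\Ni_{G''}(u)\setminus\{v\}|\cdot C\leq C\cdot C=C^2$: the set $\Ni_{G''}(u)\setminus\{v\}$ can genuinely have $C$ elements here, because when $v$ comes \emph{later} than $u$ in the degeneracy ordering, $v$ does not occupy any of the ``$\leq C$ earlier $F$-neighbour'' slots, and all $C$ of them can be $F\setminus H$-in-neighbours. So in this case you only get $m\leq C^2$, not $m\leq C^2-1$, and $(1-1/C^2)^{C^2}<1/e$. The trimming argument you give (``$v$ occupies a slot'') only applies in the other branch $vu\in E(F)\setminus E(H)$, where $v$ is forced to be an earlier $F\setminus H$-in-neighbour and $\Ni_H(u)=\emptyset$. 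This is a real (if very small) numerical gap in both your write-up and, for different reasons, the paper's; it is immaterial to the final result because the later constants ($10^7$, the LLL slack, the $q_v$ estimate) absorb a constant factor, but as stated the clean inequality $\Prob[\bD_{vu}]\geq 1/(eC^2)$ is not fully justified in that one sub-case.
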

\begin{proof}
    Consider an arbitrary edge $vu \in E(G')$.
    The probability that $vu \in E(\bJ)$ is simply $\Prob[\bJ_{vu}] = p$. 
    Then the probability  of $\bD_{vu}$ given $\bJ_{vu}$ is precisely the probability that for all $w \in \Ni_{G'}(u)\backslash v$ we have $wu \not\in E(\bJ)$ (i.e. not $\bJ_{wu}$) and $\degi_\bJ(w) = 0$.
    In other words, the probability that a set of at most $C(C-1)$ edges that appeared in $G'$ do not appear in $\bJ$.
    This occurs with probability at least $(1-p)^{C(C-1)}$.
    Recalling that  $p= 1/C^2$, and using the relation that $1-1/x \geq 
    \mathrm{e}^{-(1/(x-1))}$ which holds for all reals $x>1$, we can lower bound this probability by $\mathrm{e}^{-\frac{C(C-1)}{C^2-1}} \geq 1/\mathrm{e}$.
    Thus the probability of the event $\bD_{vu}$ is at least $p(1-p)^{C(C-1)} \geq 1/(eC^2) $.
\end{proof}

\begin{claim}\label{claim:independent_neighbours}
    For each vertex $v \in V(G)$ and edge $e\in E(G')$, changing the outcome of $\bJ_{e}$ effects the outcome of at most $8$ events $\bD_{vu}$.
\end{claim}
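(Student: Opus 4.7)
The plan is to enumerate, for fixed $v$ and fixed $e=ab\in E(G')$, the values of $u$ for which $\bJ_e$ lies in the dependency region of $\bD_{vu}$, and then to bound this count via the average-degree hypothesis.

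First I would identify the dependency region of $\bD_{vu}$. Since $vu\in\bD$ iff $\bJ_{vu}=1$ and neither deletion condition fires at $u$, the event $\bD_{vu}$ is a function of three batches of $\bJ$-variables: (i) $\bJ_{vu}$ itself; (ii) $\bJ_{wu}$ for $w\in\Ni_{G'}(u)\setminus\{v\}$, which control whether $\degi_\bJ(u)\ge 2$; and (iii) $\bJ_{xw}$ for $w\in\Ni_{G''}(u)\setminus\{v\}$ and $x\in\Ni_{G'}(w)$, which control whether the second deletion rule triggers via $\degi_\bJ(w)\ge 1$ for some such $w$.

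Next I would match $e=ab$ (oriented $a\to b$) against (i)--(iii). Cases (i) and (ii) both force $u=b$, contributing at most one affected $u$. Case (iii) forces $b\in\Ni_{G''}(u)\setminus\{v\}$, so $u$ must lie in $U := \No_{G''}(b)\cap\No_{G'}(v)$, a subset of the common neighbourhood $N_F(v)\cap N_F(b)$. The key remaining task is to bound $|U|$, which I would do by applying the average-degree hypothesis to the subgraph of $F$ induced on $\{v,a,b\}\cup U$. In the generic case $a\notin\{v\}\cup U$, this subgraph has $|U|+3$ vertices and at least $2|U|+1$ edges ($vu$ and $bu$ for each $u\in U$, plus $ab$), so the hypothesis gives $(4|U|+2)/(|U|+3)\le 3$ and hence $|U|\le 7$. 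The degenerate subcases $a=v$ and $a\in U$ both yield strictly smaller bounds on $|U|$ (a similar accounting gives $|U|\le 4$ and $|U|\le 6$ respectively), so in every case $\bJ_e$ affects at most $1+7=8$ events $\bD_{vu}$.

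The main challenge is the clean bookkeeping of the three dependency types and of the degenerate positions of $a$; once these are organised, the bound is an immediate consequence of the average-degree hypothesis applied to a single explicit subgraph.
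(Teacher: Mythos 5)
Your proof is correct and follows essentially the same route as the paper: you decompose the dependency of $\bD_{vu}$ on the $\bJ$-variables, match $e$ against this dependency set to conclude that every affected $u$ (apart from the head of $e$) must lie in the common $F$-neighbourhood of $v$ and the head of $e$, and then bound this common neighbourhood via the average-degree hypothesis. The only cosmetic difference is that you include the tail $a$ and the edge $ab$ in the witnessing subgraph (the paper's version on just $\{v,b\}$ together with the common neighbours in fact yields the slightly sharper $\ell\le 7$), but both give the stated bound of $8$.
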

     \begin{proof}
     Recall that $\bD$ is a deterministic function of $\bJ$. 
     The event $\bD_{vu}$ depends on only: the event $\bJ_{vu}$, and for each $w \in \Ni_{G'}(u)\backslash u$ the events $\bJ_{wu}$ and $\bJ_{w'w}$ for each $w' \in \Ni_{G'}(w)$.
     The same can be said for $\bD_{e}$ for any edge $e \in E(G')$. 
     Thus if  $\bD_{vu}$ depends on $\bJ_{qw}$ then it must be that
     \begin{itemize}
         \item $qw=vu$ or
         \item $w=u$ or 
         \item $w\neq v$ and  $wu\in E(G')$.
     \end{itemize} 
     The first two candidates can hold for at most one vertex $u$ in total. 
     Thus if $\bJ_{qw}$ effects $\ell$ events $\bD_{vu}$ then there must be at least $\ell-1$ vertices in the common neighbourhood of $w$ and $v$. 
     But this implies that there exists a small subgraph of average degree $\frac{4(\ell-1)}{(\ell-1)+2}$.
     Thus by the condition that all subgraphs have average degree at most $3$, we have $\ell \leq 8$.
\end{proof}

\begin{claim}\label{claim:degree_concentration}
    For all $v \in V(G')$ we have 
    \[
    \Prob\left[\dego_\bD(v) < \frac{|A_H^G(v)|-C}{2eC^2}\right] < \exp\left[-\frac{(|A_H^G(v)|-C)^2}{|A_H^G(v)|\cdot 200\cdot e^2C^6}\right].
    \]
\end{claim}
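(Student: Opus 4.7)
The statement is a one-sided concentration bound for $\dego_\bD(v)$ around its expectation, and I would prove it by applying McDiarmid's inequality with respect to the independent Bernoulli variables $\{\bJ_e\}_{e\in E(G')}$. Since $\bD$ is a deterministic function of $\bJ$, the sum
\[
\dego_\bD(v) \;=\; \sum_{u \in \No_{G'}(v)} \mathbf{1}[\bD_{vu}]
\]
is a deterministic function $f\colon \{0,1\}^{E(G')} \to \mathbb{Z}_{\geq 0}$, and this is the function to which I would apply McDiarmid.

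First, I would estimate the mean. By linearity and Claim \ref{claim:expected_deg},
\[
\mathbb{E}[f] \;\geq\; \frac{|\No_{G'}(v)|}{eC^2} \;\geq\; \frac{|A_H^G(v)| - C}{eC^2},
\]
where the last step uses the construction of $G'$ from the degeneracy ordering, which guarantees $|\No_{G'}(v)| \geq |A_H^G(v)| - C$. In particular the threshold in the statement is at most half the mean, so it suffices to control deviations of size $t = (|A_H^G(v)| - C)/(2eC^2)$. (When $|A_H^G(v)| \leq C$ the threshold is non-positive and the probability is vacuously $0$, so we may assume $|A_H^G(v)| > C$.)

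Next, I would verify the hypotheses of McDiarmid. Claim \ref{claim:independent_neighbours} gives directly that flipping any single coordinate $\bJ_e$ affects at most $8$ of the summands $\mathbf{1}[\bD_{vu}]$, so $f$ is $s$-Lipschitz with $s = 8$. To make the exponent of McDiarmid match the stated form, I would also need a bound on the number $m$ of coordinates on which $f$ actually depends. Reading off from the proof of Claim \ref{claim:independent_neighbours}, each event $\bD_{vu}$ depends only on the edges $\bJ_e$ lying within two reverse steps of $u$ in $G'$; summing over $u \in \No_{G'}(v)$ and bounding in-degrees in $G'$ via the degeneracy $C$ (rather than via $\Delta$) yields $m \leq c\, C^2 |A_H^G(v)|$ for a small absolute constant $c$.

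Substituting $s = 8$, the above value of $t$, and this bound on $m$ into McDiarmid's inequality
\[
\Pr\bigl[\mathbb{E}[f] - f > t\bigr] \;<\; \exp\!\left(-\tfrac{2t^2}{m s^2}\right)
\]
produces an exponent proportional to $-(|A_H^G(v)| - C)^2 / (|A_H^G(v)|\, e^2 C^6)$, matching the claimed $200$ prefactor once the constants are tracked. The main obstacle is precisely this last bookkeeping step: one must bound $m$ by something proportional to $C^2 |A_H^G(v)|$ rather than $\Delta^2 |A_H^G(v)|$, which requires genuinely using the degeneracy-based orientation of $G''$ (and the fact that every subgraph has average degree at most $3$) to control the number of distinct $\bJ_e$ entering the two-step reverse neighborhood of $\No_{G'}(v)$.
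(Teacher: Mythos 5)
Your proposal is correct and follows essentially the same route as the paper: lower-bound $\mathbb{E}[\dego_\bD(v)]$ via Claim \ref{claim:expected_deg}, take the Lipschitz constant $s=8$ from Claim \ref{claim:independent_neighbours}, bound the number of relevant coordinates by $m \approx C^2|A_H^G(v)|$, and apply McDiarmid's inequality with $t = (|A_H^G(v)|-C)/(2eC^2)$. The ``bookkeeping step'' you flag as the main obstacle (getting $m$ in terms of $C$ rather than $\Delta$, using that in-degrees in $G'$ are bounded by the degeneracy) is precisely the point the paper also relies on, stating without further elaboration that $\dego_\bD(v)$ depends on at most $|A_H^G(v)|C^2$ events.
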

\begin{proof}
    Let $v \in V(G')$ be arbitrary.
    As $|\No_{G'}(v)| \geq |A_H^G(v)|  - C$ we have, by Claim \ref{claim:expected_deg} that $\E[|\dego_\bD(v)|] \geq (|A_H^G(v)| - C)/(eC^2)$. 
    Further $\dego_\bD(v)$ is a function of $\{\bJ_{vu}\}$ where $u$ ranges over $\No_{G'}(v)$. 
    Recall that $\dego_\bD(v)$ depends on at most $m\coloneqq |A_H^G(v)| C^2$ events.  
    Further by Claim \ref{claim:independent_neighbours} it depends on these events in a Lipschitz manner with constant $8$.
    Therefore we can apply McDiarmid's inequality with $t = \frac{|A_H^G(v)|-C}{2eC^2}$, $m=|A_H^G(v)|C^2$ and $s = 8$ to get
    \[
    \Prob\left[\dego_\bD(v) < \frac{|A_H^G(v)|-C}{2eC^2}\right] < \exp\left[-\frac{2\cdot t^2}{m\cdot s^2}\right]  \leq  \exp\left[-\frac{2(|A_H^G(v)|-C)^2}{|A_H^G(v)|C^2\cdot 8^2\cdot 4e^2C^4}\right].
    \]
    \end{proof}
    Call the quantity inside the exponent of the end last equations $q_v$. 
    For the final step we wish to apply the LLL to conclude that with positive probabilty we have that $\bD$
    satisfies the conditions of the Lemma. 
    We note that if $q_v \leq 5\log \Delta$ then the statement holds trivially for $v$ as $\dego_D(v)\geq 0 $.
    For all other $v \in V(G)$ we will introduce a bad event $\bB_v\coloneqq\{\dego_\bD(v) < \frac{|A_H^G(v)|-C}{2eC^2}\}$.
    Clearly if no bad events occur then the guarantees of the Lemma are satisfied. 
    We have, by Claim \ref{claim:degree_concentration}, that $\Prob[\bB_v] < \exp\left[-q_v\right]$ and by assumption $q_v > 5 \log \Delta$.
    It follows that $\Prob[\bB_v] <1/\Delta^5$. 
    
    Now we upper bound the number of dependent bad events per bad event.
    If for some pair $u,v \in V(G')$, we have that $\bB_u$ and $\bB_v$ are dependent then there must be a vertex $w$ at distance at most $2$ from both $u$ and $v$ in $J$.
    Thus for each $u$ there are at most $\Delta^4$ vertices $v$ such that $\bB_u$ and $\bB_v$ are dependent.
    We apply the LLL via the relation
    $e\cdot q\cdot d \leq \mathrm{e} \cdot  \frac{1}{\Delta^5} \cdot \Delta^4 = \frac{\mathrm{e}}{\Delta} \leq 1$.
    The LLL then tells us that with positive probability none of the events $\bB_v$ occur. 
    Further, as observed earlier, with probability one, $\bD$ is an escape-way, and it follows that there exists at least one escape-way $D$ as guaranteed by the Lemma. 
    Substituting $C=3$ completes the proof with the estimate $q_v > |A_H^G(v)|/10^7$.
\end{proof}


We are now ready to prove the following result which is a stronger version of our main result (Theorem~\ref{thm:main_simp}). 
\begin{theorem}\label{thm: main_simplev2}
    Let $\Delta \in \mathbb{N}$ and suppose $G$ is a graph of maximum degree at most $\exp(\Delta/10^9)$ such that all subgraphs on at most $(10^7\Delta+1) n$ vertices have average degree at most $12/5$. 
    Further suppose $J\subset G$ is a spanning subgraph with minimum degree at least $10^7\Delta$.
    Then $J$ contains all trees on $n$ vertices with maximum degree at most $\Delta$ as induced subgraphs of $G$.
\end{theorem}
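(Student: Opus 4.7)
The plan is to implement the Pre-Emptive Greedy Algorithm, processing the vertices $t_1,\ldots,t_n$ of $T$ one by one. At step $i$ I will maintain an injective map $\phi_i\colon T_i\to V(G)=V(J)$ embedding $T_i$ as an induced subgraph of $G$ with all tree edges in $J$, together with an escape-way $D_i$ of $G$ (whose edges lie in $J$) encoding reservations for potential future extensions. Fix a constant $d$ with $d\geq 15$ and $2d+2\leq 10^7\Delta+1$, and let $C_i\coloneqq C(\phi_i(T_i))$ denote the corresponding $d$-critical set. The invariants I would preserve inductively are: (I1) the tree edges of $T_i$, oriented away from a chosen root, form a subgraph of $D_i$ (so that the inducedness clause in the definition of an escape-way automatically forces $\phi_i(T_i)$ to be induced in $G$); and (I2) every $v\in C_i$ has at least $\Delta-\deg_{T_i}(v)$ out-neighbours in $D_i$ that lie outside $\phi_i(T_i)$, ready to accommodate future children of $v$.

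Each inductive step splits into two phases. In the first, when the adversary attaches $t_{i+1}$ to a tree vertex $p\in T_i$, set $v\coloneqq\phi_i(p)$. Since $v\in\phi_i(T_i)\subseteq C_i$, (I2) supplies a reservation $w\in\No_{D_i}(v)\setminus\phi_i(T_i)$, and I would set $\phi_{i+1}(t_{i+1})\coloneqq w$. The escape-way condition will then force every other $G$-neighbour of $w$ that lies in $\Vin(D_i)$ --- which contains every non-root tree vertex --- to be an in-neighbour of $w$ in $D_i$; but $v$ is the unique such in-neighbour, so $w$ has no other $G$-neighbour in $\phi_i(T_i)$, preserving inducedness. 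The edge $v\to w$ was already in $D_i$, so $D_i$ itself does not change at this stage. In the second phase, to restore (I2) for the possibly enlarged $C_{i+1}$, I would apply Lemma~\ref{lem: ISM} with $H=D_i$ on the subgraph of $G$ induced by $C_{i+1}$ and its $J$-neighbourhood to produce an agreeing escape-way $D^+$, and set $D_{i+1}\coloneqq D_i\cup D^+$; by Proposition~\ref{prop:ISM_equiv} this is again an escape-way. Old vertices of $C_i$ keep their reservations in $D_i$, and for each newly critical $v\in C_{i+1}\setminus C_i$ Proposition~\ref{prop:critical_bounds_available} (applied to the sub-escape-way of $D_i$ whose out-set is contained in $C_i$) gives $|A_{D_i}^J(v)|\geq \deg_J(v)-d\geq 10^7\Delta-d$, whereupon Lemma~\ref{lem: ISM} supplies at least $\Delta$ out-edges at $v$ in $D^+$, the $5\log\Delta_G$ correction being negligible since $\Delta_G\leq\exp(\Delta/10^9)$.

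The main obstacle --- and the place where the theorem's hypotheses enter --- is keeping the critical set bounded. If at some step $|C_i|\geq 2|\phi_i(T_i)|$, Lemma~\ref{prop:density_of_critical} would produce a subgraph of $G$ on at most $(2d+2)|\phi_i(T_i)|\leq (10^7\Delta+1)n$ vertices with average degree strictly greater than $12/5$, contradicting the theorem's hypothesis; hence $|C_i|<2n$ throughout. A short counting argument using the maximum-degree bound $\exp(\Delta/10^9)$ on $G$ then shows the subgraph of $G$ induced by $C_{i+1}$ and its $J$-neighbourhood fits inside the $(10^7\Delta+1)n$-vertex regime of the theorem, so the average-degree hypothesis transfers to it giving degeneracy at most $2$ and Lemma~\ref{lem: ISM} applies on it as needed (the Lovász Local Lemma inside its proof tolerating the dependencies thanks to the same maximum-degree bound). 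Initialising with $T_0=\{r\}$ embedded arbitrarily and $D_0$ obtained from one preliminary application of Lemma~\ref{lem: ISM} with $H=\emptyset$, iterating the procedure for $n-1$ steps yields the desired induced embedding of $T$ into $J\subseteq G$.
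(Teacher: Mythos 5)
Your plan follows the paper's Pre-Emptive Greedy Algorithm very closely, and the skeleton of the induction is the right one, but there are two genuine gaps in the execution.

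\textbf{The set on which you apply Lemma~\ref{lem: ISM} is too large.} You propose applying the lemma to ``the subgraph of $G$ induced by $C_{i+1}$ and its $J$-neighbourhood'', claiming a ``short counting argument'' shows this fits inside the $(10^7\Delta+1)n$-vertex regime. This cannot work: since $J$ is a spanning subgraph of $G$ with $\delta(J)\geq 10^7\Delta$, the maximum degree of $G$ must satisfy $\exp(\Delta/10^9)\geq 10^7\Delta$ for the theorem to be non-vacuous, so $|C_{i+1}|\cdot(1+\Delta_G)$ can exceed $2n\cdot 10^7\Delta>(10^7\Delta+1)n$. The paper avoids this by adding only $d\approx 10^7\Delta$ \emph{selected} available neighbours per newly critical vertex $v\in C^*=C_{i+1}\setminus C_i$, rather than the full $J$-neighbourhood. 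That selection is essential both to keep the set small enough to invoke the average-degree hypothesis (hence degeneracy $\leq 3$), and at the same time large enough that the relative available neighbourhood $A^G_{D_i}(v)$ inside the selected set still has order $\approx 10^7\Delta$, so that Lemma~\ref{lem: ISM} returns $\geq\Delta-1$ reservations. Taking the critical-set parameter $d$ equal to the small constant ($\geq 15$) for the bootstrap percolation is correct, but a separate and much larger parameter governs how many neighbours are placed into the set fed to Lemma~\ref{lem: ISM}; conflating these two roles, or taking all $J$-neighbours, breaks the argument.

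\textbf{The root is not handled, and your inducedness argument does not cover it.} Your inducedness argument for the first phase claims that every $G$-neighbour of the new vertex $w$ lying in $\Vin(D_i)$ must be an in-neighbour of $w$. That is not what the escape-way condition says (the edge could be oriented $w\to u$), although one can repair this via the in-degree bound for non-root vertices. The real issue is the root $r$: $r\notin\Vin(D_i)$, so no escape-way constraint controls $G$-edges between $r$ and other tree vertices. Initialising $D_0$ by a single application of Lemma~\ref{lem: ISM} with $H=\emptyset$ reserves only a $1/10^7$-fraction of $r$'s $J$-neighbours for $r$, so some $G$-neighbour of $r$ can later enter the tree as a descendant of a different vertex, destroying inducedness at the root. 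The paper addresses this by reserving \emph{all} $J$-neighbours of $r$ for $r$ and deleting the remaining $G$-neighbours of $r$ from $J$ (using the codegree bound $\leq 7$ to show this barely affects minimum degrees), which forces the root to have in-degree $0$ and therefore, via Proposition~\ref{prop:}, makes the component of the final escape-way containing $r$ a tree.

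A further smaller point: after obtaining $D^+$ you should delete from it every edge whose tail lies outside $C_{i+1}\setminus C_i$, otherwise the invariant $\Vout(D_{i+1})\subseteq C_{i+1}$ (which Proposition~\ref{prop:critical_bounds_available} requires in the next step) is lost, and the available-neighbourhood lower bound can fail for vertices near those spurious out-edges.
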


\begin{proof}


Let $T$ be a fixed tree on $n$ vertices and maximum degree $\Delta$. 
We pick a root $r\in V(G)$.  
We will follow an online process extending locally our current tree $T_i$ whereby at each step an adversary picks a vertex $x$ of degree at most $\Delta -1$ from the current tree and we extend the tree from that $x$.

\textit{The Pre-emptive Greedy Algorithm.}
Throughout the process we will maintain an escape-way $B_i$ which contains $T_i$ rooted at $r$.
We will write $D_i \coloneqq K(B_i)$.
On a high level, one can think of $B_i$ as those edges which are reserved for (possibly) extending the current tree $T_i$.
The directed-ness of an edge in $B_i$, say from $u$ to $v$, is to signal the fact that \emph{if} the edge $uv$ is ever used, then that is because we extend \emph{from} $u$ \emph{to} $v$.

In step one our adversary chooses a root $r = T_1\subset T$ and we choose any vertex $r \in V(G)$ to be the root. 
The root requires some special treatment (an unimportant technicality).
We add all edges from $r$ to $N_G(r)$ into $B_1$ and we delete all other vertices adjacent to $r$ from $J$. 
Due to the condition that there are no small graphs of average degree at least $12/5$ co-degrees are bounded $7$ and this deletion reduces the degree of any vertex in our graph by at most $7$. 
For convenience and because $d \geq 1000$, we shall ignore this $7$ and continue to call the remaining graph $J$.
We now state the formal properties to be maintained throughout the whole process.  
Suppose at step $i$, we have  a tree $T_i$ in $G$ and an escape-way $B_i$ in \emph{both} $G$ and $J$. 
We will always assume there is an isomorphism $f$ from $T_i$ to a sub-tree of $T$ and for ease of notation identify $x\in T_i$ with $f(x)$. 
We let $X_i$ denote the non-leaf vertices of $T_i$  and let $C_i \coloneqq C(X_i)\subseteq V(J)$ denote the $d$\textit{-critical} vertices of $X_i$.
Note that $X_i$, $C_i$ and $D_i$ are determined by $T_i$ and $B_i$.
We will have $B_i\subseteq B_{i+1}$ and $T_i\subset T_{i+1}$, and thus $X_i\subset X_{i+1}$, $D_i\subseteq D_{i+1}$ and $C_i\subseteq C_{i+1}$ for all $i\in [n-1]$. 
We further ensure:
\begin{enumerate}  
    \item\label{item:tree_is_sub} $T_i\subset B_i$;
    \item\label{item:critical_is_out} $\Vout(B_i) = C_i$;
 \item\label{item:reserved_enough}
    For every $v \in C_i$ we have $\deg^{+}_{B_i}(v) \geq \Delta -1$ and $\deg^{}_{B_i}(r) =\deg_G(r)$.
\end{enumerate}
As $C_1 = r$ it is trivial to see that the conditions are satisfied initially.
We now proceed by induction and assume we have $T_i,B_i,X_i,C_i$ and $D_i$ as described for some $i \in [n-1]$.
At each step, an adversary chooses a vertex $x$ from the current tree $T_i$ with $d_{T_i}(x)<\Delta$ and asks us to append a leaf to $x$. 
We now show how we can maintain the above properties while extending $T_i$.
Let $x\in T_i$ be the chosen vertex.
\\ 

\subsection{Case $1$: $x\in C_i$.}
We simply choose an out-neighbour of $x$, say $y\in N^+_{B_i}(x)\backslash N_{T_i}(x)$. 
This exists by \eqref{item:reserved_enough} since $d_{T_i}(x) \leq \Delta -1 $ and because if $x$ is not the root then it has an in-neighbour in $B_i$.
We let $T_{i+1} = T_i \cup xy$. We then define $B_{i+1}\coloneqq B_i$ and note that it contains $T_{i+1}$.
\\

\subsection{Case $2$: $x\notin C_i$.}
In this case, $x$ has many available neighbours, and so we would like to simply add them to our tree.
However, informally, adding arbitrary edges to $B_i$ may trigger a criticality cascade.
In order to find out which vertices near $x$ might be so affected we look at the $d$-critical set $C_{i+1} \coloneqq C(C_i \cup x)$ of $C_i\cup x$.
Because $C_i = C(X_i)$ we have that $C_{i+1} = C(X_i\cup x)$.

We observe that $|C_{i+1}|< 2|X_i\cup x|$. 
Indeed suppose otherwise, then the $100$-critical set of $X_i\cup x$ is at least as large and so  by Lemma \ref{prop:density_of_critical}, there would exists a graph $H\subset J$ of order at most $300n$ and average degree at least $12/5$. 
This would contradict the assumptions of the theorem. 
Thus, in particular, we have that $C^* \coloneqq C_{i+1} \backslash C_i$ has order most $n$.
Further, because $\Vout(B_i) = C_i$ by induction, Proposition \ref{prop:critical_bounds_available} tells us that $|A^G_{D_i}(v)| \geq \deg_G(v) - d \geq d$ for all $v\not\in C_i$ (and thus all $v\in C^*)$.

For each $v\in C^*$ we add $d$ vertices from $A^G_{D_i}(v)$, along with $v$, to a set we call $C^+$. 
Thus $|C^+| \leq (d+1)|C^*| \leq (d+1)n$.
We now give as input to Lemma~\ref{lem: ISM}, the graph $J[C^+]$ as $F$, the graph $G[C^+]$ as $G$ and $D_i$ the (bi)oriented subgraph of $J$ as $H$.
We input the maximum degree of $\exp(d/10^9)$ and note that this $F$ has degeneracy at most $3$, because it has order most $10^7\Delta n$ (degeneracy is at most the maximum average degree over all subgraphs).
Lemma~\ref{lem: ISM} therefore gives us an escape-way $I$, that agrees with $D_i$, such that the out-degree of each 
vertex $v\in C^*$ is at least $\frac{d}{10^7}-5 \frac{d}{10^9}$, which is at least $\Delta$ because $d>10^7\Delta$.
We obtain $I'$ from $I$ by deleting all edges whose initial vertex is not in $C^*$.
We let $B_{i+1}$ be the union of $B_i$ and $I'$.

Crucially $I'$ agrees with $D_i = K(B_i)$ because $I$ agrees with $D_i$ and so $B_{i+1}$ is an escape-way by Proposition \ref{prop:ISM_equiv}.
We now let $T_{i+1} \coloneqq T_i\cup xy$ for some $y\in N_{B_{i+1}}(x)$, so that clearly $T_{i+1}\subset B_{i+1}$ as required by \eqref{item:tree_is_sub}.
To see \eqref{item:reserved_enough} we first note it holds by induction for $v\in C_i$ because $B_i\subset B_{i+1}$ and for $v\in C^*$ by the construction of $I'$.
Similarly for \eqref{item:critical_is_out}. 
This completes the induction step.

Thus we obtain our tree $T_{n}$ as a subgraph of our escape-way $B_{n}$ in $G$ and $J$. 
We must check that is induced. 
First note that because $\deg_{B_i}^+(r) = \deg_G(r)$ and all other neighbours were deleted, we have that the in-degree of $r$ is zero. 
By Proposition \ref{prop:} this implies that the component of $B_n$ containing $r$ is a tree.
Further by the definition of escape-way, $\Vin(B_n)$ induces a subgraph of $B_i$, thus all that remains to note is that any neighbours of $r$ that are in the tree are out-neighbours of $r$.
This completes the proof.

\end{proof}

\begin{theorem}\label{thm:countable}
    There exists $\eps>0$ such that if 
    $G$ is a $d$-regular graph with $h(G) > d - 3 + \frac{1}{10^7\Delta+1}$ for some integer $\Delta < \eps d$,
    then $G$ contains a spanning $\Delta$-ary pseudoforest $F$, that is component-wise induced, with the property that one can turn $F$ into an induced forest by deleting one vertex from each of its components.
\end{theorem}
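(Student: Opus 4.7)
The strategy is to adapt the pre-emptive greedy algorithm of Theorem~\ref{thm: main_simplev2} to the countable spanning setting. Enumerate $V(G)=\{v_1,v_2,\ldots\}$ and build an increasing chain of escape-ways $B_0=\emptyset\subset B_1\subset B_2\subset\cdots$ in $G$, ultimately taking $F=\bigcup_i B_i$ (viewed as an undirected subgraph) as the desired pseudoforest. Paralleling the main theorem, we maintain at each stage $i$ the invariants that every $v\in \Vout(B_i)$ has out-degree at least $\Delta$ in $B_i$, that $\Vout(B_i)$ lies in the $k$-critical set $C_i$ of a set $R_i$ of ``designated roots" (for a constant $k=\Theta(\Delta)$ to be chosen), and that $v_1,\dots,v_i\in\Vin(B_i)\cup\Vout(B_i)$ so that $F$ will be spanning.

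\textbf{The step and finiteness of cascades.} At stage $i$, if $v_i$ is already covered set $B_i=B_{i-1}$; otherwise declare $v_i$ a new root by adjoining $v_i$ to $R_i$ and adding the star $I_0$ consisting of the edges $v_i\to u$ for $u\in N_G(v_i)\setminus\Vin(B_{i-1})$. Let $C^*=C_i\setminus C_{i-1}$ denote the newly $k$-critical vertices. By Proposition~\ref{prop:critical_bounds_available}, every $v\in C^*$ has $|A_{K(B_{i-1}\cup I_0)}(v)|\geq d-k$, so feeding the induced subgraph of $G$ on $C^*\cup N_G(C^*)$ together with the oriented subgraph $K(B_{i-1}\cup I_0)$ into Lemma~\ref{lem: ISM} produces an escape-way $I_1$ with each $v\in C^*$ having out-degree at least $\Delta$, valid whenever $\Delta<\eps d$ for $\eps$ small enough to absorb the factor $10^7$ and the $5\log\Delta$ loss. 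By Proposition~\ref{prop:ISM_equiv}, $B_i\coloneqq B_{i-1}\cup I_0\cup I_1$ is an escape-way. The crucial new ingredient relative to Theorem~\ref{thm: main_simplev2} is finiteness of $C^*$: $d$-regularity together with $h(G)>d-3+\frac{1}{10^7\Delta+1}$ force every finite subgraph of $G$ to have average degree strictly below $3-\frac{1}{10^7\Delta+1}$, and choosing $k$ just above $3\cdot 10^7\Delta$ makes the second bullet of Lemma~\ref{prop:density_of_critical} (which otherwise yields finite subgraphs with average degree approaching $3-\frac{3}{k+1}$) rule out unbounded $k$-critical sets. Hence each $C^*$ is finite and the application of Lemma~\ref{lem: ISM} is legitimate.

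\textbf{Taking the limit and the main obstacle.} Let $F=\bigcup_i B_i$. Proposition~\ref{pseudoforest} together with the in-degree-at-most-one property shows that $F$ is a pseudoforest; the spanning invariant makes $F$ spanning and the out-degree invariant gives the $\Delta$-ary structure. Delete from each component its designated root (for acyclic components) or a chosen vertex on the unique cycle (for cyclic components); both are vertices of $\Vout(B)\setminus\Vin(B)$. Component-wise inducedness is immediate from the escape-way definition: the non-root vertices of each component lie in $\Vin(B)$, forcing their $G$-edges inside the component to be $B$-edges, while the root's $G$-neighbours inside the component are by construction its $I_0$-children. After the deletion the survivors all lie in $\Vin(B)$, so their induced $G$-subgraph coincides with their induced $B$-subgraph, and because we have broken the unique cycle of each cyclic component this subgraph is a forest. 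The principal technical difficulty is precisely controlling the cascade size at each stage so that Lemma~\ref{lem: ISM} applies on a finite subgraph; this is exactly the role of the strengthened edge-Cheeger hypothesis, whose near-tightness is witnessed by the $K_{2,2}$-blowup example discussed after the theorem statement.
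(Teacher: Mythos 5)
Your proposal follows the paper's high-level template (online process building escape-ways, finiteness of critical cascades via the second bullet of Lemma~\ref{prop:density_of_critical}, limit escape-way, Proposition~\ref{pseudoforest}, and deletion of one vertex per component), and the calculation relating $h(G)>d-3+\frac{1}{10^7\Delta+1}$ to a $3-\frac{1}{10^7\Delta+1}$ average-degree bound on finite subgraphs is correct. But the local step you use is genuinely different from the paper's, and it has a gap.

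The paper's step is the same one-edge-at-a-time online game as Theorem~\ref{thm: main_simplev2}: the adversary is simulated to ask for an extension from $v_1$ exactly $\Delta$ times, then $v_2$ exactly $\Delta$ times, and so on, and each single extension passes through Case~1/Case~2. Two things follow automatically: every vertex eventually enters the critical set and hence acquires out-degree $\Delta$, and the escape-way invariant is preserved because every new edge is chosen from the current available neighbourhood $A_{K(B_i)}(\cdot)$, which Proposition~\ref{prop:ISM_equiv} shows is exactly the condition needed. Your step instead adds, in one shot, the full star $I_0 = \{v_i\to u : u\in N_G(v_i)\setminus\Vin(B_{i-1})\}$ whenever $v_i$ is uncovered. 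This set of $u$'s is strictly larger than $A_{K(B_{i-1})}(v_i)$ in general: if $u\in N_G(v_i)\setminus\Vin(B_{i-1})$ but $u$ has a $G$-neighbour $x\in\Vin(B_{i-1})$, then after adding $I_0$ both $u$ and $x$ lie in $\Vin(B_{i-1}\cup I_0)$ and $ux\in E(G)$, yet neither $ux$ nor $xu$ is in $B_{i-1}\cup I_0$ (these are not star edges and were not in $B_{i-1}$ since $u\notin\Vin(B_{i-1})$). So $B_{i-1}\cup I_0$ is already not an escape-way, and the subsequent invocation of Proposition~\ref{prop:ISM_equiv} to conclude that $B_{i-1}\cup I_0\cup I_1$ is an escape-way is unjustified. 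A second, related gap: your invariant only gives out-degree $\geq\Delta$ to vertices of $\Vout(B_i)$, which are confined to the critical sets $C_i$; a vertex covered early as a child of some root and never driven into a critical set ends up with out-degree $0$ in the limit, and then $F$ is not $\Delta$-ary. The paper sidesteps both problems at once by forcing every vertex through Case~2 (hence into the critical set, hence reserving $\Delta$ out-neighbours for it) before it is ever extended. To repair your argument you would either need to restrict $I_0$ to $A_{K(B_{i-1})}(v_i)$ (which then no longer guarantees that every neighbour of $v_i$ gets covered by this star, re-introducing bookkeeping) or simply revert to the paper's one-extension-at-a-time process.
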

\begin{proof}
    We run the same argument as in the proof of Theorem \ref{thm:main_simp}, with some minor adjustments. 
    Firstly in the online game, the adversary is now allowed to ask us to add neighbours to any vertex in $G$, this does not effect the process it just means that the current $T_i$ may not be connected. 
    We enumerate the vertices of $G$ and assume that the adversary will ask us to extend the vertices in that order. We always extend from a vertex $\Delta$ times in a row. 
    We choose the root $r$ arbitrarily, but we do not do anything to its neighbours and we just proceed with the induction. 
    Case $1$ is identical but in Case $2$ we only require that the critical sets are finite and we show so by applying Lemma \ref{prop:density_of_critical}.
    It is clear that as our adversary will eventually ask us to extend $T_i$ from every vertex and give it out-degree $\Delta$ that we obtain in the limit a spanning escape-way $D$ with all out-degrees $\Delta$. 
    By Proposition \ref{pseudoforest} this is a spanning pseudoforest, and it is straightforward to see that it is $\Delta$-ary.

    We now describe which vertices to delete. 
    By Proposition \ref{prop:} if a component of $D$ has a cycle then it has no vertex of in-degree $0$. 
    For each component we either delete one vertex from the at most one cycle, or we delete the at most one vertex with in-degree $0$ (if there two vertices of indegree $0$ in the same component, then any path between them must have a vertex with indegree at least $2$).
    Thus all remaining vertices are in $\Vin(D)$ and so the resulting graph is induced.
    Further, there are no cycles because we deleted a vertex from each. 
    \end{proof}

 \section{Induced trees in random graphs}

We will show how we can derive easily Theorem~\ref{thm: random_trees}  from Theorem~\ref{thm:main_simp}. We restate it below for convenience of the reader. 
\setcounter{section}{1}
\setcounter{theorem}{3}
\begin{theorem}
    There is $C>0$, such that for all $\Delta\in \mathbb{N}$ and $d> 2^{20\Delta}$, $G(n,d/n)$ contains all trees with maximum degree at most $\Delta$ and order at most $ \frac{Cn}{d\log^2(d)}$ as induced subgraphs with high probability. 
\end{theorem}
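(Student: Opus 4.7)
The plan is to verify the hypotheses of Theorem~\ref{thm:main_simp} for a suitable induced subgraph of $G \sim G(n, d/n)$ with high probability, and then invoke it directly. Since an induced copy of a graph inside $G[V_0]$ is automatically induced in $G$, restricting to an induced subgraph loses nothing. Throughout, set $m \coloneqq \lfloor Cn/(d\log^2 d)\rfloor$ for a small absolute constant $C>0$, and note that the hypothesis $d > 2^{20\Delta}$ forces $\Delta \leq \log_2(d)/20$, so the critical threshold appearing in Theorem~\ref{thm:main_simp} satisfies $(10^7\Delta+1)m = O\bigl(Cn/(d\log d)\bigr)$.

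The first step is to establish the local sparseness condition: with high probability, every subset $S$ with $|S| \leq (10^7\Delta+1)m$ satisfies $e(G[S]) \leq \tfrac{6}{5}|S|$. This is a standard consequence of first-moment and Chernoff estimates for $G(n, d/n)$, exploiting that our threshold $(10^7\Delta+1)m$ sits well below the classical sparseness scale $n/d$. The $\log^2 d$ factor built into $m$ provides exactly the slack needed to absorb both the $10^7\Delta$ multiplicative factor and the unavoidable error from union bounds over the relevant subsets.

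Next I would carve out a vertex set $V_0 \subseteq V$ of size $(1-o(1))n$ on which $G[V_0]$ has minimum degree at least $10^7\Delta$. Since $d > 2^{20\Delta}$ dwarfs $10^7\Delta$, Chernoff gives that with high probability all but $o(n)$ vertices of $G$ have degree in $[d/2, 2d]$; starting from the set of such good vertices, one iteratively removes any vertex whose current degree into $V_0$ is below $10^7\Delta$, and local sparseness bounds the total number of such removals before the process stabilizes. Applying Theorem~\ref{thm:main_simp} to $G[V_0]$ then yields every tree of maximum degree $\Delta$ and order $m$ as an induced subgraph of $G[V_0]$, hence of $G$.

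The main obstacle is reconciling Theorem~\ref{thm:main_simp}'s maximum-degree hypothesis (maximum degree at most $\exp(\Delta/10^9)$) with the $\Theta(d)$ maximum degree of $G(n, d/n)$. As the authors themselves remark, this hypothesis is only an artifact of the Lovász Local Lemma deployed in Lemma~\ref{lem: ISM} and is ``trivial to satisfy'' in their applications; unpacking the proof shows the actually sufficient bound is much more generous (roughly $\exp(\Omega(d))$), which comfortably accommodates the maximum degree present in $G(n,d/n)$ after the mild trimming of hyperdegree vertices performed above.
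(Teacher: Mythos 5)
Your overall plan matches the paper's: clean $G(n,d/n)$ to obtain an induced subgraph with controlled min/max degree and no small dense spots, then invoke Theorem~\ref{thm:main_simp}. However, there is a genuine gap in how you handle the maximum-degree hypothesis, and your proposed fix does not work.

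You claim that ``unpacking the proof'' shows the true maximum-degree tolerance is $\exp(\Omega(d))$ rather than $\exp(\Delta/10^9)$, so that the $\Theta(d)$ max degree of the cleaned random graph is fine. This is only half of the story. The max degree $\Delta_F$ of the host graph appears in Lemma~\ref{lem: ISM} in \emph{two} places. In the Lov\'asz Local Lemma step, the constraint is indeed roughly $\Delta_F < \exp(\Omega(\text{min degree}))$, which is satisfied comfortably. But the \emph{conclusion} of Lemma~\ref{lem: ISM} is $\dego_D(v)\geq |A_H^G(v)|/10^7 - 5\log\Delta_F$, and in Case~2 of the embedding argument one needs the resulting out-degree to be at least $\Delta$ (the tree's max degree). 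If, as you propose, you only clean to minimum degree $10^7\Delta$ and the host max degree is $\Theta(d)$, the out-degree bound becomes roughly $\Delta - 5\log(\Theta(d))$. Since $d>2^{20\Delta}$ forces $\log d > 14\Delta$, this is negative, so the reservation step fails. In other words, the $-5\log\Delta_F$ loss must be dominated by the cleaned minimum degree divided by $10^7$, which forces you to clean to minimum degree on the order of $10^8\log d$, not $10^7\Delta$.

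The paper handles this by the one trick you're missing: it applies Theorem~\ref{thm:main_simp} with an \emph{inflated} degree parameter $\Delta \coloneqq \Theta(\log d)$ rather than the tree's actual maximum degree (a tree of max degree $\Delta_{\text{orig}}\leq \log_2 d/20$ is certainly also a tree of max degree $\Theta(\log d)$). This single substitution simultaneously forces a minimum degree of order $\log d$ (absorbing the $-5\log\Delta_F$ term), relaxes the theorem's max-degree hypothesis to tolerate $\Theta(d)$, and is precisely what produces the $n/(d\log^2 d)$ bound: the local-sparseness window $(10^7\Delta+1)m$ picks up a $\log d$ factor from the inflated $\Delta$, which combines with the $n/(d\log d)$ sparseness scale of $G(n,d/n)$ to cost the second logarithm. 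Your proposal, as written, does not explain where the $\log^2 d$ comes from and would not survive the out-degree computation.
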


\begin{proof}
    Let $G\sim G(n, d/n)$, where $d>2^{20\Delta}$ and $\Delta\geq 1$. It is a simple observation that the following holds w.h.p. 
    \begin{enumerate}
        \item[(i)]For every $S\subset V(G)$, $e(G[S])= \frac{d|S|^2}{2n}\pm dn/10$;
        \item[(ii)] For every $S\subset V(G)$, with $\frac{n}{200d\log(d)} \leq |S|\leq \frac{n}{100d\log(d)}$, $e(G[S])\leq (1+1/5)|S|$;
        \item[(iii)] There are at most $n/4$ vertices with degree greater than $20d$.
   \end{enumerate}
    Let $G$ satisfy the above. Delete from $G$ all vertices of degree greater than $20d$. By (iii), we obtain an induced subgraph $G'\subset G$ on at least $3n/4$ vertices.
    Let $S\subset V(G')$ be a maximal subset of size at most $\frac{n}{200d\log(d)}$ with $e(G'[S])> (1+1/5)|S|$. By assumption $|S|\leq \frac{n}{200d\log(d)}$.  Delete $S$ and let $G''\coloneqq G[V(G')\setminus S]$. It is clear that by maximality and (ii) no $S'\subset V(G'')$ of size at most $\frac{n}{200d\log(d)}$ spans more than $(1+1/5)|S|$ edges. Furthermore, by (ii), $|G''|\geq n/2$ and by (i), we know $e(G'')\geq \frac{d}{8}|G'|$. Finally, passing to an induced subgraph $G'''\subset G''$ with minimum degree at least $d/16$, we have thus constructed an induced subgraph of $G(n,d/n)$ with $\delta(G''')\geq d/16$ and $\Delta(G''')\leq 20d$. Moreover, (by (i)), we know $|G'''|\geq n/50$.  
    It is easy to see $G'''$ satisfies the conditions of Theorem~\ref{thm:main_simp} with $n\coloneqq \frac{n}{10^{14}d\log^2(d)}$ and $\Delta\coloneqq 10^6\log(d)$.
    
\end{proof}

\section{Induced size Ramsey of trees }
In this section, we will prove Theorem~\ref{thm: sizeramsey} which trivially implies Theorem~\ref{thm: 2coloursizeramsey} by taking $\varepsilon= 1/2$. 
As above, we restate the theorem for convenience of the reader. 
\setcounter{section}{1}
\setcounter{theorem}{5}
\begin{theorem}
     For all $\Delta,n\in \mathbb{N}$ and $\eps >0$ there exists a graph $G$ with less than $C(\Delta,\eps) \cdot n$ edges such that any subgraph $J\subset G$ containing $\eps\cdot e(G)$ edges contains every tree of maximum degree $\Delta$ and order at most $n$ as an induced subgraph of $G$. One can take $C(\Delta,\eps) = \left(10^{42}\Delta^3\log(\Delta)\log(\frac{1}{\varepsilon})^3\right)/\eps^2$.
\end{theorem}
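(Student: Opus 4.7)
The plan is a random-graph construction. Take $G \sim G(N, p)$ with $p = D/N$, choosing $D$ of order $\Delta \log(1/\varepsilon)/\varepsilon$ (with room for cleaning losses) and $N$ of order $n \cdot D \cdot \mathrm{polylog}(D)$ (so that there is room to fit an $n$-vertex tree after discarding the small set of vertices that sit in atypically dense subgraphs). Then $e(G) = \Theta(ND)$ is linear in $n$ with the prefactor of the claimed form $C(\Delta, \varepsilon)$. The goal is to reduce the problem to a single application of Theorem~\ref{thm: main_simplev2} inside $G$.

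First, using standard Chernoff and first-moment arguments, I would show that with positive probability $G$ satisfies: (i) $e(G) \geq (1-o(1)) N^2 p /2$; (ii) maximum degree $O(D)$; and (iii) the set $S_0$ of vertices lying in some subgraph of size at most $(10^7\Delta+1)n$ whose average degree exceeds $12/5$ has size at most $\varepsilon N/100$. Property (iii) is the crux: genuinely dense subsets of $G(N,p)$ must have size at least of order $1/p$, and a careful union bound across the admissible range of sizes controls their total vertex union, provided $N$ carries the aforementioned polylogarithmic factor above $nD$. Consequently $G - S_0$ inherits the no-dense-spot hypothesis of Theorem~\ref{thm: main_simplev2}.

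Given any $J \subseteq G$ with $e(J) \geq \varepsilon e(G)$, set $J_0 := J - S_0$. By (ii) and (iii), $e(J_0) \geq e(J) - \Delta(G)\cdot|S_0| \geq \varepsilon e(G)/2$. Iteratively remove from $J_0$ any vertex of $J_0$-degree smaller than $10^7 \Delta$ to obtain $J'$; the edge loss is bounded by $10^7 \Delta \cdot N$, which by the choice of $D$ is much less than $\varepsilon e(G)/2$, so $e(J') = \Theta(\varepsilon e(G))$. Since every vertex of $J'$ has $G$-degree $O(D)$, we get $|V(J')| \geq 2 e(J')/\Delta(G) = \Omega(\varepsilon N) \geq n$ by the choice of $N$. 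Setting $G' := G[V(J')]$, the subgraph $J'$ is a spanning subgraph of $G'$ with minimum degree at least $10^7\Delta$, and $G'$ inherits the no-dense-spot condition from $G - S_0$, so Theorem~\ref{thm: main_simplev2} applied to $(G', J')$ produces every $n$-vertex tree of maximum degree $\Delta$ as an induced subgraph of $G' \subseteq G$.

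The main technical obstacle is (iii): a na\"ive first-moment bound forces $N$ to depend polynomially on $D$ with high degree, which would blow up $C(\Delta, \varepsilon)$ well beyond the stated constant. The sharper bound requires exploiting that small dense subsets of $G(N,p)$ are heavily atypical, together with a Chernoff-type summation across sizes, to bring the $N$-dependency down to near-linear. Once this is in hand, the remaining constants can be chosen to meet the exact form of $C(\Delta, \varepsilon)$ in the statement.
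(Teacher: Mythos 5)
Your overall plan — take $G\sim G(N,D/N)$ with $D=\Theta(\Delta\log(1/\varepsilon)/\varepsilon)$ and $N=\Theta(nD\cdot\mathrm{polylog})$, verify first-moment properties, clean, pass to a min-degree subgraph of $J$, and apply Theorem~\ref{thm: main_simplev2} — is exactly the paper's strategy, and your parameter regime matches theirs. The substantive difference lies in the cleaning step, and that is also where the gap is.

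You propose to delete $S_0$, the union of \emph{all} vertices that lie in some subgraph of size at most $(10^7\Delta+1)n$ with average degree above $12/5$, and you need $|S_0|\le\varepsilon N/100$. You correctly flag this as the main obstacle, but you do not supply the argument, and your justification is shaky. The remark that ``genuinely dense subsets of $G(N,p)$ must have size at least of order $1/p$'' is a statement about expectation, not about what holds with high probability: the expected internal density of a uniformly random $k$-set only crosses $12/5$ when $k\gtrsim N/D$, but that says nothing about atypically dense $k$-sets for smaller $k$, and a first-moment count of $k$-sets with $>(6/5)k$ edges is not small in the size range you need (the bound $\binom{N}{k}\Prob[\mathrm{Bin}(\binom{k}{2},p)>(6/5)k]$ is exponentially large in $k$ once $k$ is a small multiple of $N/(D\log D)$). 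So a na\"ive union bound does not control $|S_0|$, and the harder ``Chernoff-type summation across sizes'' you gesture at would itself need a new idea; it is not a routine tightening. The paper avoids ever needing to bound $S_0$: it verifies, as a whp property, that there are \emph{no} sets of size in the single band $\bigl[\tfrac{N}{200d\log d},\tfrac{N}{100d\log d}\bigr]$ with density above $12/5$, and then deletes only \emph{one} maximal dense set $S$ of size at most the lower endpoint of that band. By maximality, any remaining dense small set $S'$ would force $S\cup S'$ to be a dense set whose size lies in the forbidden band (or would contradict maximality directly), so after deleting that single $S$ no small dense sets survive — and $|S|\le N/(200d\log d)$ is far smaller than $\varepsilon N/100$, so the edge loss is negligible. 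This ``delete one maximal set, then argue by contradiction'' trick is precisely what you need to replace your appeal to a bound on $|S_0|$; without it your cleaning step is unproved.

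A secondary, smaller issue: you assume $\Delta(G)=O(D)$ holds whp, but this fails once $\log N\gtrsim D$ (i.e.\ for very large $n$). The paper only claims the weaker property that at most $N/4$ vertices have degree above $20d$, and then deletes those high-degree vertices. Your argument should be phrased the same way; as stated, ``(ii) maximum degree $O(D)$'' is not a whp event in the full range of $n$.
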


\begin{proof}
Let $N\coloneqq \frac{10^{30}n \log(\Delta)\Delta^2 \log(\frac{1}{\varepsilon}))^3}{\varepsilon}$ and $d\coloneqq \frac{10^{12}\Delta\log(\frac{1}{\varepsilon})}{\varepsilon}$
Let $G\sim G\left(N,d/N\right)$ then as above we know that w.h.p. the following holds.
\begin{enumerate}
        \item[(i)]For every $S\subset V(G)$, $e(G[S])= \frac{d|S|^2}{2N}\pm dN/10$;
        \item[(ii)] For every $S\subset V(G)$, with $\frac{N}{200d\log(d)}\leq |S|\leq \frac{N}{100d\log(d)}$, $e(G[S])\leq (1+1/5)|S|$;
        \item[(iii)] There are at most $N/4$ vertices with degree greater than $20d$.
   \end{enumerate}
Let $G$ satisfy the above and delete all vertices of degree greater than $20d$. We obtain an induced subgraph $G'$ on at least $3N/4$ vertices with $\Delta(G_1)\leq 20d$.
Let $S\subset V(G')$ be a maximal subset of size at most $\frac{N}{200\log(d)}$ with $e(G'[S])> (1+1/5)|S|$. By assumption $|S|\leq \frac{n}{200d\log(d)}$. Delete $S$ and let $G''\coloneqq G[V(G')\setminus S]$. It is clear that by maximality and (ii) no $S'\subset V(G'')$ of size at most $\frac{n}{200d\log(d)}$ spans more than $(1+1/5)|S|$ edges. Furthermore, by (ii), $|G''|\geq N/2$ and by (i), we know $e(G'')\geq \frac{d}{8}|G'|$. Finally, passing to an induced subgraph $G'''\subset G''$ with minimum degree at least $d/16$, we have thus constructed an induced subgraph of $G(N,d/N)$ with $\delta(G''')\geq d/16$ and $\Delta(G''')\leq 20d$. Moreover, (by (i)), we know $|G'''|\geq N/50$. $G'''$ will be the desired graph. 

All we need to show is that given any $J\subset G'''$ with $e(J)\geq \varepsilon e(G'')$, $J$ contains an induced copy (in $G'''$) of every tree on $n$ vertices and maximum degree $\Delta$. We first let $J'\subset J$ be an induced subgraph of $J$ with minimum degree $\varepsilon d/20\geq 10^7(10^6\Delta\log(1/\varepsilon)) =10^7f$, where $f\coloneqq 10^6\Delta\log(1/\varepsilon)$. Moreover, by assumption $\Delta(G[V(J')])\leq 20d\leq 2^{f/10^9}$ and
every subset $|S|$ of size at most $10^{10}n\Delta\log(\Delta)\log(\frac{1}{\varepsilon})\geq (10^7f+1)n$ spans at most $(1+1/5)|S|$ edges. We may now invoke Theorem~\ref{thm: main_simplev2} with $G'''\coloneqq G$, $J\coloneqq J$, $\Delta\coloneqq f$ and $n\coloneqq n$. 
\end{proof}

We now easily derive an induced size ramsey result for $q$ colours by taking $\varepsilon\coloneqq 1/q$ in Theorem~\ref{thm: sizeramsey}

\begin{theorem}\label{thm: sizeramseyqcolours}
    There is $C>0$ such that the following holds. Let $q, \Delta, n\geq 1$. Then, there is a graph $G$ on at most $C\Delta^3\log(\Delta)q^2\log(q)^3n$ edges such that in every $q$-edge-colouring of $G$ there is a colour class which spans all trees $T$ on at most $n$ vertices and $\Delta(T)\leq \Delta$ as induced subgraphs of $G$. 
\end{theorem}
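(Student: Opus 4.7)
The plan is to obtain this as an immediate pigeonhole corollary of Theorem~\ref{thm: sizeramsey}, exactly as the sentence preceding the statement suggests. I would apply Theorem~\ref{thm: sizeramsey} with the choice $\varepsilon := 1/q$, producing a graph $G$ on at most
\[
C(\Delta, 1/q) \cdot n \;=\; \left(10^{42}\,\Delta^3 \log(\Delta)\,\log(q)^3\right) q^2 \cdot n
\]
edges, using that $\log(1/\varepsilon) = \log q$ and $1/\varepsilon^2 = q^2$. This matches the required bound with $C = 10^{42}$ (absorbing constants if needed).

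To verify the Ramsey property, I would fix an arbitrary $q$-edge-colouring of this $G$. By the pigeonhole principle, at least one colour class contains at least $e(G)/q = \varepsilon \cdot e(G)$ edges. Letting $J$ denote the subgraph of $G$ spanned by the edges of this colour class, Theorem~\ref{thm: sizeramsey} directly guarantees that $J$ contains every tree on at most $n$ vertices of maximum degree at most $\Delta$ as an induced subgraph of $G$. This is precisely the conclusion of Theorem~\ref{thm: sizeramseyqcolours}.

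The argument is a pure pigeonhole reduction with no substantive obstacle; the only ``content'' is the parameter substitution $\varepsilon = 1/q$ and the observation that the $1/\varepsilon^2$ and $\log(1/\varepsilon)^3$ factors in $C(\Delta,\varepsilon)$ translate cleanly into the stated $q^2 \log(q)^3$ dependence. A brief remark comparing this multicolour corollary to previously known bounds (in particular for induced paths, as advertised after Theorem~\ref{thm: sizeramsey}) could be added if desired, but no further proof is necessary.
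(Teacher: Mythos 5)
Your proof is correct and is exactly the paper's derivation: set $\varepsilon = 1/q$ in Theorem~\ref{thm: sizeramsey}, apply pigeonhole to find a colour class with at least $e(G)/q$ edges, and read off the conclusion with the stated constant. Nothing to add.
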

We observe this is almost tight as a function of $q$ since even for the non-induced case one has that the $q$ size ramsey number of a path on $n$ vertices is at least $cq^2n$, for some absolute $c>0$.

\section{Concluding remarks}
We have developed an algorithmic approach to embed bounded degree trees in sparse expanding graphs, generalising the remarkable result of Friedman and Pippenger \cite{friedman1987expanding}. 
We have applied this result to give the state of the art on the bounds for multiple questions.
We will now discuss some further avenues of research and state some open problems. 
\subsection{Tightening Theorem~\ref{thm:main_simp}}
There are three main places where one could tighten Theorem \ref{thm:main_simp}, each of which would yield slight but growing (as a function of $\Delta$ or $d$) improvements in our applications. 
Firstly, one could try remove the maximum degree condition.
We use this exclusively when applying the LLL in Lemma \ref{lem: ISM}.
Secondly, one could try to remove the factor of $\Delta$ in the order of the sets upon which we place our density constraint.
Morally, this factor comes from reserving $\Delta$ neighbours for critical vertices even though some critical vertices may never actually be extended from. 
Finally, one could try to replace the average degree upper bound of $12/5$ by some larger constant.
This upper bound is used in three places. 
 It is  used to bound the degeneracy in Lemma \ref{lem: ISM}, but there is a lot of slack in this application. 
 That is, one merely requires that $\Delta$ is much smaller than $d/C^2$, where $d$ is the minimum of the graph given to Lemma \ref{lem: ISM} (we think the factor of $C^2$ is roughly tight here).
 It is also used within Lemma \ref{lem: ISM} when we bound the Lipschitz constant, but this could be avoided by simply requiring girth at least $5$. 
 The place where it is really required is when it used to show that criticality cascades eventually stop (Lemma \ref{prop:density_of_critical}).
 In the case of Theorem \ref{thm:main_simp} there may be an interesting dependency between the average degree upper bound and the order of the sets that must satisfy it. 
 We find this latter problem particularly intriguing.
 For example, in a more concrete way we could not answer the following nice question. 
 \begin{problem}
   Let $d_1,d_2$ be positive integers. Is there $f(d_1,d_2)\geq 1$ and $\varepsilon(d_1,d_2)>0$ such that the following holds. Let $G$ be a graph with average degree $f(d_1,d_2)$ such that all subset of size at most $n$ have average degree at most $d_2$. Is there an induced subgraph $G'\subset G$ of average degree at least $d_1$ such that \textbf{all} subsets $S\subset V(G')$ of size at most $\varepsilon(d_1,d_2)n$ in $G'$ span at most $3/2|S|$ edges? 
 \end{problem}

Finally, we mention a related problem. Theorem \ref{thm:countable} 
in its current form is almost tight. However, if we impose girth at least $5$ then can we apply it to graphs with $h(G) > d - f(d)$ for any function $f$ that tends to infinity with $d$? 
We see no reason why $f$ could not be taken a polynomial.

\subsection{Rolling backwards}
In \cite{draganic2022rolling},  Friedman-Pippenger result was cleverly combined with a ``rolling-back" technique which allowed them to find different structures in expanding graphs. 
It is natural to ask whether our method also allows for roll-backs and
indeed, it does. The key points are that in the proof of the main result we induct on properties of $T_i$, $C_i$ and $B_i$, while $C_i$ can be viewed as a monotone function of $T_i$. 
When rolling back, one \textit{deletes} some vertices of the current embedded tree $T_i$ to obtain $T'$ and only keep the directed edges of $B_i$ that start from the new $C_i$ which is in turn a function of $T'$.
The desired properties are maintained. 
One can even delete non-leaf vertices when rolling back although the bound on the order of the critical bootstrap percolation will be worse (larger) than the factor of $2$ in Lemma \ref{prop:critical_bounds_available} (which uses that $G[X]$ is connected).

\subsection{Induced ramsey and induced size ramsey}
Recently, Dragani\'c and Keevash~\cite{keevash2024size} gave a bound on the induced size ramsey number of paths, $\rsind^q(P_n)=O(nq^3\log^2(q))$. Theorem~\ref{thm: sizeramseyqcolours} gives an improvement on this by essentially a factor of $q$. 
We believe however the size ramsey and induced size ramsey of bounded degree trees should not behave very differently \textit{as a function of the number of colours.} 
\begin{conjecture}
    Let $\Delta\geq 1$ and $T$ be a tree on $n$ vertices with $\Delta(T)\leq \Delta$. Then, for every $q\geq 1$,$$\rsind^q(T)=O_\Delta(\rsi^q(T)).$$ 
\end{conjecture}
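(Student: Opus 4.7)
The plan is to attack the conjecture by a reduction in the spirit of Hunter--Sudakov, but using Theorem \ref{thm: main_simplev2} to absorb inducedness at the end rather than passing through sparse regularity. Start from a minimum-edge graph $H$ with $m = \rsi^q(T)$ edges that is $q$-size-Ramsey for $T$. Construct $G$ as a bounded blow-up of $H$: replace each vertex $v \in V(H)$ by an independent set $V_v$ of size $K = K(\Delta)$, and replace each edge $uv \in E(H)$ by a suitably chosen (random-like) bipartite graph between $V_u$ and $V_v$ whose density depends only on $\Delta$. Because $K$ does not depend on $q$, we obtain $|E(G)| = O_\Delta(m) = O_\Delta(\rsi^q(T))$, which matches the target edge count.

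Given a $q$-edge-colouring $\chi$ of $G$, define an induced colouring $\bar\chi$ of $H$ by assigning to each $uv \in E(H)$ the plurality colour among the edges of $G$ between $V_u$ and $V_v$. By the $q$-size-Ramsey property of $H$, there is a monochromatic $T$-embedding $\phi \colon T \hookrightarrow H$ in some colour $c$ under $\bar\chi$. By construction, for each tree edge $vw$, the bipartite graph between $V_{\phi(v)}$ and $V_{\phi(w)}$ contains at least a $1/q$-fraction of its edges in colour $c$. The problem therefore reduces to finding an \emph{induced} copy of $T$ in $G$ with $v$-image in $V_{\phi(v)}$ using only colour-$c$ edges.

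To produce this induced lift I would invoke Theorem \ref{thm: main_simplev2} on the subgraph of $G$ restricted to $\bigcup_{v\in V(T)} V_{\phi(v)}$, taking $J$ to be its spanning colour-$c$ subgraph. The blow-up is designed so that after a standard pre-processing step deleting low-degree and high-degree vertices, $J$ inherits minimum degree of order $\Delta$ times a $q$-free constant, while the ambient base graph satisfies the maximum degree and small-dense-subgraph hypotheses of Theorem \ref{thm: main_simplev2}. The required vertex expansion is only by a factor of $\Delta$ over the bipartite density, and follows from the bipartite graphs between blow-up classes being sufficiently pseudorandom.

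The main obstacle, and the reason the statement is conjectural, is controlling the small-dense-subgraph hypothesis globally. A minimum-edge $q$-size-Ramsey graph $H$ has average degree growing with $q$, so dense pockets of size $\mathrm{polylog}(q)$ are generically inherited by any blow-up $G$; the present form of Theorem \ref{thm: main_simplev2} does not tolerate such pockets. Bridging this gap seems to require either (a) choosing $H$ to be a globally spread pseudorandom expander with (nearly) optimal edge count so the blow-up avoids dense pockets by construction, or (b) strengthening the induced Friedman--Pippenger machinery, perhaps by incorporating a rolling-back phase that excises and re-embeds problematic vertex sets on the fly, so as to tolerate a controlled amount of local density. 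Both directions appear to need genuinely new ideas, which is consistent with the statement being offered as a conjecture rather than as a theorem.
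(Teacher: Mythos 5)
This statement is a \emph{conjecture} posed in the concluding remarks of the paper; there is no proof of it anywhere in the paper, so there is nothing to compare your attempt against. Accordingly, you should not expect to find a correct proof, and you do not claim to have one: your write-up is a candid sketch of a plausible attack together with an explicit admission of where it breaks down, ending with the observation that the gap ``appears to need genuinely new ideas, which is consistent with the statement being offered as a conjecture.'' That is an accurate reading of the situation.

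On the substance of your sketch: the Hunter--Sudakov-style blow-up-and-plurality-colour reduction is the natural first thing to try, and your diagnosis of the obstruction (that a minimum-edge $q$-size-Ramsey graph $H$ has average degree growing with $q$, so its blow-up inherits dense pockets that the small-dense-subgraph hypothesis of Theorem~\ref{thm: main_simplev2} cannot tolerate) is one genuine source of trouble. Two further difficulties are worth recording. First, the monochromatic embedding $\phi\colon T\hookrightarrow H$ guaranteed by the $q$-size-Ramsey property need not be induced in $H$, so $H[\phi(V(T))]$ may span many more edges than $T$ itself; all of these extra edges propagate into the ambient blow-up $G$ restricted to $\bigcup_v V_{\phi(v)}$, and that is precisely where the density hypothesis must hold. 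Second, the plurality-colour guarantee gives a $1/q$-fraction of colour-$c$ edges in each bipartite block only in aggregate; individual vertices of $V_{\phi(v)}$ may have no colour-$c$ neighbours at all, so the preprocessing to a spanning subgraph $J$ with min-degree $\geq 10^7\Delta'$ is not automatic and discards vertices in a way that must be reconciled with the expansion condition. These compound the issue you identified and make it plausible that, as you suggest, either a density-tolerant strengthening of the induced Friedman--Pippenger machinery (possibly with roll-backs) or a structured choice of $H$ is needed. Your assessment is therefore consistent with the paper's treatment: it is an open problem, not a proved result.
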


We reiterate a central problem in the area regarding the induced ramsey number of  bounded degree graphs. 
The best upper bound on the induced ramsey number  for graphs of bounded degree is $n^{O(\Delta)}$ proved by Conlon, Fox and Zhao~\cite{conlon2014extremal}. 
It is therefore remarkable that the possibility of induced ramsey numbers of bounded degree graphs being linear remains open.
\begin{problem}
    Is there $\Delta\geq 1$, such that for every $C>0$, there is a graph $G$ on $n$ vertices and $\Delta(G)\leq \Delta$ with $\rind(G)>Cn$?
\end{problem}

\subsection{Induced structures in random graphs}

We are confident our main result will be very useful in finding other large induced structures in random graphs. 
For example, one could ask what is the largest $k$ for which $G(n,p)$ contains an \textit{induced} subdivision of a $K_k$ whp? 
This would be a induced version of a classical result of Ajtai, K\'omlos and Szemer\'edi~\cite{AKS1979topological} which guarantees that whp $G(n,p)$ where $p=o(\frac{1}{{\sqrt{n}}})$ contains a subdivison of $K_{(1+o(1))\Delta}$ where $\Delta $ is the maximum degree of $G(n,p)$. 
Finally, we think our methods could be helpful in proving essentially tight bounds for the size of induced bounded degree trees in $G(n,p)$. 

\begin{conjecture}
    For every $\Delta\geq 1$ there is $C_{\Delta}>0$ such that the following holds. For all $\frac{C_{\Delta}}{n}\leq p\leq 0.1$, $G(n,p)$ contains w.h.p. all trees of order $\Omega\left (\frac{\log(pn)}{p}\right )$ with maximum degree $\Delta$. 
\end{conjecture}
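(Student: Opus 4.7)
The plan is to strengthen Theorem~\ref{thm:main_simp} to a form that applies to $G(n,d/n)$ at the scale of trees of order $n_0 = \Theta(n\log d/d)$. A direct application is impossible because the uniform density hypothesis (every subgraph on $(10^7\Delta+1)n_0$ vertices has average degree at most $12/5$) is grossly violated: random subsets of that order in $G(n,d/n)$ have average degree $\Theta(\Delta \log d)$. Thus one must open the proof and replace this global condition with properties that $G(n,d/n)$ actually satisfies at this scale.

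My first step is the preprocessing already used in the proof of Theorem~\ref{thm: random_trees}: delete vertices of abnormal degree and iteratively delete small dense patches, producing $G^\ast \subseteq G(n,d/n)$ of order $\Omega(n)$ with $\delta(G^\ast), \Delta(G^\ast) = \Theta(d)$ and $e(S) \leq (1+1/5)|S|$ for every $S \subseteq V(G^\ast)$ with $|S| \leq n/(d\log d)$. This local sparsity is exactly what Lemma~\ref{lem: ISM} needs. The real work is to re-prove Lemma~\ref{prop:density_of_critical} in a form that controls the critical set $C(X_i)$ throughout the online process at this much larger scale.

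The critical-set definition used in the paper (``$v$ has $\geq d$ distance-$2$ neighbours in $X_i$'') is too aggressive for random graphs at our target scale: when $|X_i| = \Theta(n\log d/d)$, a typical vertex $v$ has $\Theta(d \log d)$ distance-$2$ paths into $X_i$, so almost every vertex becomes critical and $|C(X_i)| \gg 2|X_i|$. I would replace this by tracking only the vertices $w \in N(v)$ that are \emph{actually} blocked, via an edge-disjoint short path witnessing that $w$ cannot be extended inducedly. In $G^\ast$ such edge-disjoint witnesses correspond to copies of short dense subgraphs, which $G(n,d/n)$ contains in small numbers by a first-moment bound, so the refined critical set satisfies $|C(X_i)| = O(|X_i|)$. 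With this replacement the pre-emptive greedy algorithm of Theorem~\ref{thm: main_simplev2} goes through with only routine changes.

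The main obstacle is establishing the modified criticality bound for \emph{every} $X_i$ that could be generated by the adversarial online process, not merely for a fixed typical set: standard random-graph concentration naturally controls a single set, but the evolution $X_0 \subset X_1 \subset \cdots$ is correlated with the random graph through the algorithm. I would address this by revealing the edges of $G(n,d/n)$ lazily alongside the adversary's moves, so that the cascade at step $i$ is analysed against the unrevealed randomness; or alternatively by isolating a deterministic ``local pseudorandomness'' property (on distance-$2$ paths and their overlaps) that $G(n,d/n)$ satisfies w.h.p.\ and that suffices for the cascade bound to hold for all $X_i$ uniformly. Carrying this through without incurring additional logarithmic losses -- which would push the conjectured $\log(pn)/p$ threshold out of reach -- is where I expect the technical difficulty to concentrate.
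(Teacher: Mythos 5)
This statement is a \emph{conjecture}, explicitly presented as open in the paper: immediately after stating it the authors write ``We do not even know the above result for any bounded degree tree (including a path).'' There is therefore no proof of record to compare against, and your text, by its own framing, is a research plan rather than a proof. The diagnosis you give of why Theorem~\ref{thm:main_simp} does not apply directly at the scale $\Theta(n\log d/d)$ is correct: the deterministic density hypothesis fails badly there, and the criticality cascade as defined in the paper (via distance-$2$ neighbourhoods) would engulf almost every vertex. You have identified the right obstruction.

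However, the plan as written does not close any of the gaps it identifies. The proposed replacement of $C(X_i)$ by a set of ``actually blocked'' vertices, witnessed by edge-disjoint short paths, is never made precise, and the crucial bound $|C(X_i)| = O(|X_i|)$ is asserted, not derived; Lemma~\ref{prop:density_of_critical} is a purely deterministic statement driven by the average-degree hypothesis, and once that hypothesis is dropped one needs a genuinely new deterministic local condition on $G^\ast$ (verified w.h.p.\ in $G(n,d/n)$) under which a modified cascade stalls --- you gesture at a ``first-moment bound on short dense subgraphs'' but do not extract such a condition. The adaptivity problem you raise in the last paragraph (the sequence $X_0\subset X_1\subset\cdots$ is chosen by an adversary interacting with the algorithm, so union bounds over fixed sets do not apply) is a genuine and central difficulty which you leave open; ``lazy revelation'' of edges is not compatible with the density checks the algorithm must perform, and you acknowledge you do not have a workable pseudorandomness substitute. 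Finally, you do not address the two remaining sources of the $\log^3 d$ gap noted in the paper: the maximum-degree hypothesis $\Delta(G)\le\exp(\Delta/10^9)$ forces the tree-degree parameter passed to Theorem~\ref{thm: main_simplev2} to be $\Theta(\log d)$ rather than $\Theta(1)$, and the factor of $\Delta$ multiplying the scale of the density condition contributes another $\log d$; neither of these is touched by your redefinition of the critical set, so even if your cascade bound were established you would still be short of the conjectured $\log(pn)/p$ threshold. In short, the obstacles you name are the right ones, but none of them is overcome, so this is not a proof.
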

We do not even know the above result for any bounded degree tree (including a path).

\bibliographystyle{alpha}
\bibliography{Induced}

\end{document}